\newtheorem{theorem}{Theorem}[section]
\newtheorem{lemma}[theorem]{Lemma}
\newtheorem{corollary}[theorem]{Corollary}
\newtheorem{proposition}[theorem]{Proposition}
\theoremstyle{definition}
\newtheorem{definition}[theorem]{Definition}
\theoremstyle{remark}
\newtheorem{remark}[theorem]{Remark}
\numberwithin{equation}{section}
\begin{document}
\title[Topologically conjugate classifications]{Topologically conjugate classifications of the translation actions
on low-dimensional compact connected Lie groups}

\author{Xiaotian Pan}
\address{Xiaotian Pan, School of Mathematics, Jilin university, 130012, Changchun, P.R.China} \email{616280954@qq.com}

\author{Bingzhe Hou*}\thanks{*Corresponding author}
\address{Bingzhe Hou, School of Mathematics, Jilin university, 130012, Changchun, P.R.China} \email{houbz@jlu.edu.cn}
\date{}
\subjclass[2000]{37C15, 55S37, 22C05}
\keywords{topological conjugacy, rotation vectors, translation actions, $C^*$-dynamical system, classification}
\begin{abstract}
In this article, we focus on the left translation actions on noncommutative compact connected Lie groups
with topological dimension 3 or 4, consisting of ${\rm SU}(2),\,{\rm U}(2),\,{\rm SO}(3),\,{\rm SO}(3) \times S^1$ and ${{\rm Spin}}^{\mathbb{C}}(3)$.
We define the rotation vectors (numbers) of the left actions induced by the elements in the maximal tori
of these groups,
and utilize rotation vectors (numbers) to give the topologically conjugate
classification of the left actions. Algebraic conjugacy and smooth conjugacy are also considered.
As a by-product, we show that for any homeomorphism $f:L(p, -1)\times S^1\rightarrow L(p, -1)\times S^1$, the induced isomorphism $(\pi\circ f\circ i)_*$ maps each element in the fundamental group of $L(p, -1)$ to itself or its inverse, where $i:L(p, -1)\rightarrow L(p, -1)\times S^1$ is the natural inclusion and $\pi:L(p, -1)\times S^1\rightarrow L(p, -1)$ is the projection.
\end{abstract}
\maketitle

\section{Introduction}
The main research direction of this paper comes from the issue originally raised by S. Smale \cite{SS-1963} in the 1960s.
He hoped to classify all smooth self-maps of differential manifolds, and to obtain a
better understanding of the dynamical properties of some smooth self-maps.

First of all, let us review the concepts of topological conjugacy, smooth conjugacy, linear conjugacy and algebraic conjugacy.

Assume that $X$ is a topological manifold, and $f,\,g$ are continuous self-maps of $X$.
If there exists a homeomorphism $h:\,X \rightarrow X$ such that
$$
h \circ f=g \circ h,
$$
$f$ and $g$ are said to be topologically conjugate.
Moreover, if $X$ is a smooth manifold and $h$ is a smooth self-homeomorphism of $X$, $f$ and $g$ are said to be smooth conjugate.
If $X$ is a vector space and $h$ is a linear automorphism of $X$, $f$ and $g$ are said to be linearly conjugate.
If $X$ is a topological group and $h$ is an automorphism of $X$, $f$ and $g$ are said to be algebraically conjugate.

Topologically conjugate classification is an important and difficult task in dynamical systems.
One can associate to a self-homeomorphism $h$, a $C^*$-dynamical system $(C(X),\,h^*)$ and a crossed product $C^*$-algebra $C(X)\rtimes_h\mathbb{Z}$.
Hence we could study topological conjugate classification by studying the corresponding $C^*$-algebra dynamical systems and crossed product $C^*$-algebras.
In particular, the classification of $C^*$-algebras has been intensively studied in Elliott classification program \cite{Ell95}. So far the researchers such as G. Elliott, G. Gong, H. Lin, etc. have achieved great success for the classification of simple $C^*$-algebras (see \cite{EG96J}, \cite{EG96}, \cite{EGL07}, \cite{EI07}, \cite{Gong98}, \cite{Lin04}, \cite{Lin15M} and so on). It becomes increasingly important to classify minimal dynamical systems since their crossed product $C^*$-algebras are simple.
The remarkable work of Giordano, Putnam and Skau \cite{GPK} showed that the transformation
group $C^*$-algebras of two minimal homeomorphisms of the Cantor set are isomorphic if and only if the homeomorphisms are strong orbit equivalent.
Furthermore, J. Tomiyama proved that two transitive systems $(X, \alpha)$ and $(X, \beta)$ are flip conjugate if and only if their crossed product $C^*$-algebras are isomorphic preserving $C(X)$.
Toms and Winter classified the crossed product $C^*$-algebras of a special class of minimal homeomorphisms on finite dimensional metric spaces in \cite{TW09},
which improved a result of Lin and Phillips in \cite{LP10}.
H. Lin had done a series of work on minimal systems by the ordered K-theory of their crossed product $C^*$-algebras.
In particular, he proved that the crossed product $C^*$-algebras of two minimal homeomorphisms on metric spaces with finite covering dimension are isomorphic if and only if they have isomorphic Elliott invariants in \cite{Lin18}.

Unfortunately, there is a limitation to use the $C^*$-algebras to study classification of topological conjugacies.
From a result of Toms and Winter \cite{TW09},
the minimal diffeomorphisms of $S^3$ and $S^5$ correspond to the same crossed product $C^*$-algebras. On the other hand, a self-diffeomorphism corresponds to a smooth subalgebra (a dense subalgebra) of the crossed product $C^*$-algebra which is called a noncommutative differential manifold by A. Connes \cite{AC-1998}.
H. Liu \cite{HL-2017} proved the smooth subalgebras of diffeomorphisms of $S^3$ and $S^5$ have different cyclic cohomologies.
And then he \cite{HL-2018} utilized the smooth algebras of crossed products to classify certain self-diffeomorphisms up to smooth conjugacies.
Hence the smooth subalgebras of crossed products can be used to classify smooth conjugacies.

In this paper, we not only study topological conjugacy, but also investigate algebraic conjugacy and smooth conjugacy.
We expect our research to provide more examples for the study of $C^*$-algebras (as noncommutative topological spaces) and smooth subalgebras (as noncommutative differential manifolds).

For non-minimal dynamical systems, the conclusions of topologically conjugate classifications mainly lie in linear automorphisms of finite dimensional vector spaces, automorphisms of abelian topological groups and affine transformations of tori.
R. Adler and R. Palais \cite{AP-1965} discussed some topologically conjugate questions of the automorphisms of $T^n$.
And then for the topologically conjugate classifications of the Anosov diffeomorphisms on differential manifolds, especially $T^n$, there have been some satisfactory results (see \cite{JF-1970,MA-1974}). Furthermore,  Adler, Tresser and Worfolk \cite{ATW-1997} studied the linear endomorphisms on $T^2$ in detail. P. Walters discussed the topologically conjugate questions of the affine transformations of $T^n$ and more general compact abelian groups in \cite{WP-1968,WP-1969}, respectively. S. Bhattacharya also studied the questions of the orbit equivalence and topological conjugacy of the affine actions on compact abelian groups \cite{BS-2000}, and the automorphisms flows on compact connected Lie groups \cite{BS2-2000}.
J. Robin~\cite{RJW-1972} studied the topological conjugacy and structural stability for discrete dynamical systems, and then he cooperated with N. Kuiper to investigate the topologically conjugate classifications of the linear endomorphisms on finite linear spaces in \cite{KR-1973}.
R. Schult \cite{SR-1977} proved that the topologically conjugate classification of the linear representations of any compact Lie group is equivalent to their algebraically conjugate classification.

In the present article, we study the translation actions on some noncommutative compact connected Lie groups.
One can see that our research objects are not contained in the dynamical systems mentioned above.
For translation actions on compact Lie groups,
S. Weinberger \cite{SW-2008} stated that $(G, g)$ is topologically conjugate to $(G, h)$ if and only if $G/{\rm cl}(<g>) \cong G/{\rm cl}(<h>)$ by an
isomorphism that pulls back the principle ${\rm cl}<g>={\rm cl}<h>$ bundles, where $<g>$ is the subgroup generated by $g$.
In his paper, he investigated the case that $<g>$ is dense in the the
maximal torus $T$ of $G$,
and proved that the ``Weyl mixed'' translations are all topologically conjugate for ${\rm SU}(n),\,n > 2$.
Our aim is to find an explicit and computable way to classify the translation actions on a class of noncommutative compact connected Lie groups completely.

It is well-known that the rotation number plays important role in the study of orientation-preserving homeomorphisms on the unit circle,
as one can see from the classification of irrational rotation $C*$-algebras by Pimsner, Voiculescu \cite{PV} and Rieffel \cite{Rie}. Moreover,  H. Lin used rotation vectors to classify higher dimensional noncommutative tori \cite{Lin}, and he also studied approximate conjugacy for Furstenberg transformations on torus \cite{Lin2008}; Elliott and Li considered Morita equivalence of smooth noncommutative tori in \cite{EL07} and strong Morita equivalence of higher-dimensional noncommutative tori in \cite{EL08} and \cite{Li04}; B. Hou, H. Liu and X. Pan  \cite{HouLiuPan} studied mutual embeddability equivalence for rotation algebras.

\begin{definition}\label{def:1}
Assume that $f$ is an orientation-preserving self-homeomorphism of $S^1$,
and $F:\,\mathbb{R} \rightarrow \mathbb{R}$ is a lift of $f$. Then the limit
\begin{center}
${\lim\limits_{n\to\infty}}{\dfrac{F^{n}(x)-x}{n}}={\lim\limits_{n\to\infty}}{\dfrac{F^{n}(x)}{n}}$
\end{center}
exists, and it only depends on $F$. We denote this limit by $\rho(F)$.
If $F_1$ is another lift of $f$, there exists some $l \in \mathbb{Z}$ such that $F_1-F=l$, and then we get ${\rho(F_{1})}={\rho(F)}+l$, i.e.,
$$
{\rho(F_{1})}={\rho(F)}\quad({\rm mod}\,\,\,\mathbb{Z}).
$$
This fact indicates that the decimal parts of $\rho(F)$ and $\rho(F_{1})$ are equal. Hence we denote the common decimal part by $\rho(f)$ which is called the rotation number of $f$.
\end{definition}

There is an important result for rotation numbers and the topologically conjugate classification of the orientation-preserving self-homeomorphisms of $S^1$

\begin{proposition}\label{prop:1}
Assume that $f,\,g:\,S^1 \rightarrow S^1$ are two orientation-preserving self-homeomorphisms on $S^1$.
Then $f$ and $g$ are topologically conjugate if and only if
$$
\rho(g)=\pm\,\rho(f)\quad({\rm mod}\,\,\,\mathbb{Z}).
$$
\end{proposition}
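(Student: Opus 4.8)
The plan is to prove the two implications separately; in both directions the key facts I would use are (i) that the rotation number modulo $\mathbb{Z}$ is invariant under \emph{orientation-preserving} topological conjugacy, and (ii) that conjugating a rotation by the reflection $r\colon S^1\to S^1$, $r(x)=-x$ in the model $S^1=\mathbb{R}/\mathbb{Z}$, turns it into its inverse.

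For the ``if'' direction, I would note that a rotation on $S^1$ has the form $R_\alpha(x)=x+\alpha$, with rotation number $\alpha\pmod{\mathbb{Z}}$. If $\rho(g)=\rho(f)\pmod{\mathbb{Z}}$ then $f=g$ as maps, so the identity is a conjugacy. If $\rho(g)=-\rho(f)\pmod{\mathbb{Z}}$ then $g=f^{-1}$, and a one-line computation on lifts shows $r\circ f\circ r^{-1}=f^{-1}=g$, so $r$ is an orientation-reversing conjugacy; hence $f$ and $g$ are topologically conjugate.

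For the ``only if'' direction, suppose $h\circ f=g\circ h$ for some homeomorphism $h$ of $S^1$. Since every self-homeomorphism of $S^1$ is either orientation-preserving or orientation-reversing, I would split into two cases. When $h$ preserves orientation, I would pick lifts $F,G,H\colon\mathbb{R}\to\mathbb{R}$ of $f,g,h$; then $H\circ F$ and $G\circ H$ are both lifts of $h\circ f=g\circ h$, hence differ by an integer $k$, and an induction gives $H\circ F^{n}=G^{n}\circ H+nk$. Because $H-\mathrm{id}$ descends to $S^1$ and is therefore bounded, dividing by $n$ and letting $n\to\infty$ yields $\rho(F)=\rho(G)+k$, i.e. $\rho(g)=\rho(f)\pmod{\mathbb{Z}}$. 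When $h$ reverses orientation, I would write $h=\tilde h\circ r$, so that $\tilde h=h\circ r^{-1}$ preserves orientation; conjugating the relation $h\circ f\circ h^{-1}=g$ and invoking fact (ii) gives $\tilde h\circ f^{-1}\circ\tilde h^{-1}=g$, and applying the orientation-preserving case to the pair $(f^{-1},g)$ yields $\rho(g)=\rho(f^{-1})=-\rho(f)\pmod{\mathbb{Z}}$.

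The hard part is really just the bookkeeping: the rotation number in Definition~\ref{def:1} is only defined for orientation-preserving maps, so one must carefully track whether the conjugating homeomorphism preserves or reverses orientation and use the reflection trick to reduce the latter case to the former. The remaining analytic content---passing the recursion $H\circ F^{n}=G^{n}\circ H+nk$ to the limit---is routine once the boundedness of $H-\mathrm{id}$ is observed, and no deeper obstacle arises.
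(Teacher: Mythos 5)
Your argument is correct. Note that the paper does not actually prove Proposition~\ref{prop:1}; it is stated as a well-known classical fact immediately after Definition~\ref{def:1}, so there is no in-paper proof to compare against. Your proof is the standard one and is complete: the identity and the reflection $r(x)=-x$ handle the two cases of sufficiency, and for necessity you correctly split according to whether the conjugacy preserves or reverses orientation, reduce the reversing case to the preserving case via $r\circ f\circ r^{-1}=f^{-1}$, and extract $\rho(F)=\rho(G)+k$ from the lifted relation $H\circ F^{n}=G^{n}\circ H+nk$ using the boundedness of $H-\mathrm{id}$.
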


In fact, Proposition \ref{prop:1} implies that we can give a topologically conjugate classification of the orientation-preserving self-homeomorphisms of $S^1$ by rotation numbers.

Similar to rotation numbers, people also defined rotation vectors of the rotations of $T^n$.
For any
$$
\alpha=\left(\begin{array}{c}
{\textrm{e}}^{2\pi{\rm i}\theta_1}\\
{\textrm{e}}^{2\pi{\rm i}\theta_2}\\
\vdots\\
{\textrm{e}}^{2\pi{\rm i}\theta_n}
\end{array}\right) \in T^n,\qquad
\theta_1,\,\theta_2,\, \cdots \,{\theta}_{n}\in [0,1),
$$
we define a rotation $f_\alpha$ of $T^n$ by
$$
f_\alpha:\,u \rightarrow \left(\begin{array}{cccc}
{\textrm{e}}^{2\pi{\rm i}\theta_1}&&&\\
&{\textrm{e}}^{2\pi{\rm i}\theta_2}&&\\
&&\ddots&\\
&&&{\textrm{e}}^{2\pi{\rm i}\theta_n}
\end{array}\right)u,\qquad
\forall\, u\in T^n.
$$
Set
$$
\mathfrak{M}_{T^n}=\big\{f_{\alpha}:\,T^n \rightarrow T^n\,|\,\alpha \in {T^n} \big\}.
$$
Then $\mathfrak{M}_{T^n}$ is the set consisting of all rotations of $T^n$

\begin{definition}\label{def:2}
Assume that $f_{\alpha} \in \mathfrak{M}_{T^n}$, and $F_{\alpha}:\,\mathbb{R}^n \rightarrow \mathbb{R}^n$ is a lift of $f_{\alpha}$. Then the limit
$$
\rho(F_{\alpha})={\lim\limits_{n\to\infty}}{\dfrac{F_{\alpha}^{n}({\bm x})-{\bm x}}{n}}
={\lim\limits_{n\to\infty}}{\dfrac{F_{\alpha}^{n}({\bm x})}{n}}
$$
exists, and it only depends on $F_\alpha$.
We denote the decimal part of $\rho(F_{\alpha})$ (it means the decimal parts of all components of the vector $\rho(F_{\alpha})$) by $\rho(f_{\alpha})$ which is called the rotation vector of $f_\alpha$.
\end{definition}

\begin{remark}
For the general self-homeomorphisms (not rotations) of $T^n$, we can not define the rotation vectors of them by the same way,
because the limit
$$
{\lim\limits_{n\to\infty}}{\dfrac{F^n({\bm x})-{\bm x}}{n}}
={\lim\limits_{n\to\infty}}{\dfrac{F^n({\bm x})}{n}}
$$
may be dependent on the choices of ${\bm x}$, or does not exist.
\end{remark}

For rotation vectors and topologically conjugate classifications, there is also an important result.

\begin{proposition}\label{prop:2}
Assume that $f,\,g \in \mathfrak{M}_{T^n}$ are two rotations of $T^n$.
Then $f$ and $g$ are topologically conjugate if and only if
$$
\rho(g)={\bm A}\rho(f)\quad({\rm mod}\,\,\,\mathbb{Z}),
$$
where the matrix ${\bm A}\in{\rm GL}_n(\mathbb {Z})$ is just the matrix form of the isomorphism $h_*:\,\pi_1(T^n) \rightarrow \pi_1(T^n)$ induced by $h$ which is the topological conjugacy from $f$ to $g$.
\end{proposition}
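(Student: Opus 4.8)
The plan is to pass to the universal cover $\mathbb{R}^n\rightarrow T^n$ and reduce the statement to elementary bookkeeping about lifts. I would first record two standard facts. Any homeomorphism $h:T^n\rightarrow T^n$ admits a lift $H:\mathbb{R}^n\rightarrow\mathbb{R}^n$ satisfying $H(\x+\bm{m})=H(\x)+\A\bm{m}$ for all $\x\in\mathbb{R}^n$ and $\bm{m}\in\mathbb{Z}^n$, where $\A\in{\rm GL}_n(\mathbb{Z})$ is \emph{exactly} the integer matrix of the induced isomorphism $h_*:\pi_1(T^n)\rightarrow\pi_1(T^n)$; consequently the continuous map $\x\mapsto H(\x)-\A\x$ is $\mathbb{Z}^n$-periodic, hence bounded on $\mathbb{R}^n$. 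Second, a rotation $f_\alpha\in\mathscr{M}_{T^n}$ lifts to the translation $F(\x)=\x+\bm{a}$, where $\bm{a}\in\mathbb{R}^n$ is any lift of $\alpha$, so that $\rho(F)=\bm{a}$ and $\rho(f_\alpha)$ is the decimal part of $\bm{a}$.

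For necessity, suppose $f=f_\alpha$ and $g=f_\beta$ are conjugate via $h$, and let $\A$ be the matrix of $h_*$. Choose lifts $F(\x)=\x+\bm{a}$, $G(\x)=\x+\bm{b}$ and $H$ as above. Since $H\circ F$ and $G\circ H$ are both lifts of the single map $h\circ f=g\circ h$, their difference is a continuous $\mathbb{Z}^n$-valued function, hence a constant vector in $\mathbb{Z}^n$; replacing $G$ by a suitable lift of $g$ we may assume $H(\x+\bm{a})=H(\x)+\bm{b}$ for all $\x$. Iterating gives $H(\x+N\bm{a})=H(\x)+N\bm{b}$ for all $N\in\mathbb{N}$. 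Writing $H(\x)=\A\x+R(\x)$ with $R$ bounded, this becomes $N(\bm{b}-\A\bm{a})=R(\x+N\bm{a})-R(\x)$, which stays bounded as $N\rightarrow\infty$, forcing $\bm{b}=\A\bm{a}$. Since $\A$ has integer entries, passing to decimal parts is legitimate and yields $\rho(g)=\A\rho(f)\ ({\rm mod}\ \mathbb{Z}^n)$.

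For sufficiency, assume $\rho(g)\equiv\A\rho(f)\ ({\rm mod}\ \mathbb{Z}^n)$ for some $\A\in{\rm GL}_n(\mathbb{Z})$. I would take the lifts $\bm{a}=\rho(f)$ and $\bm{b}=\rho(g)$, so that $\A\bm{a}-\bm{b}\in\mathbb{Z}^n$, and let $h:T^n\rightarrow T^n$ be the toral automorphism induced by the linear map $H(\x)=\A\x$ of $\mathbb{R}^n$; then $h_*=\A$. One checks directly that $H\circ F$ and $G\circ H$ differ by the integer vector $\A\bm{a}-\bm{b}$, hence project to the same self-map of $T^n$, i.e. $h\circ f=g\circ h$. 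Thus $f$ and $g$ are topologically conjugate — in fact algebraically conjugate — and the matrix of the induced isomorphism is the prescribed $\A$.

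The main obstacle I anticipate is the bookkeeping in the necessity direction: one must manage the $\mathbb{Z}^n$-ambiguities of the various lifts carefully enough to guarantee that it is precisely the matrix of $h_*$, and not merely some unidentified element of ${\rm GL}_n(\mathbb{Z})$, that appears in the identity $\rho(g)=\A\rho(f)$. The decisive mechanism is that boundedness of $H(\x)-\A\x$ converts the cocycle relation $H(\x+N\bm{a})=H(\x)+N\bm{b}$ into the linear-algebraic equality $\bm{b}=\A\bm{a}$ in the limit; everything else is routine covering-space manipulation.
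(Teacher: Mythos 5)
Your proof is correct. Note that the paper itself states Proposition~\ref{prop:2} without proof, treating it as a known classification result; so there is no in-paper argument to compare against directly. That said, your mechanism is exactly the one the paper deploys in its proof of Lemma~\ref{lem:3}: choose a lift $H$ of the conjugacy satisfying $H(\x+\bm{m})=H(\x)+\A\bm{m}$ with $\A$ the matrix of $h_*$, observe that $H(\x)-\A\x$ is $\mathbb{Z}^n$-periodic and hence bounded, and let the boundedness kill the linear growth in the iterated cocycle relation to extract $\bm{b}=\A\bm{a}$. Your handling of the two delicate points is sound: the integer-valued difference of the two lifts of $h\circ f=g\circ h$ is constant by connectedness, so it can be absorbed into the choice of lift of $g$; and since $\A$ preserves $\mathbb{Z}^n$, reducing $\bm{b}=\A\bm{a}$ modulo $\mathbb{Z}^n$ legitimately yields $\rho(g)=\A\rho(f)\ (\mathrm{mod}\ \mathbb{Z})$ with the correct matrix $\A=h_*$, not merely some element of ${\rm GL}_n(\mathbb{Z})$. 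The sufficiency direction via the toral automorphism induced by $\A$ is the standard construction and also shows, as you remark, that the conjugacy can be taken algebraic.
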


In fact, Proposition \ref{prop:2} implies that we can give a topologically conjugate classification of the rotations of $T^n$ by rotation vectors.

Following the development of rotation theory, many mathematicians, such as P. D\'{a}valos, J. Franks, A. Koropecki and so on, raised the concept of rotation sets for a class of homeomorphisms which are homotopic to the identity map on $T^n$, and proved a few important results in order to study the dynamical properties of these homeomorphisms (see \cite{DP-2013,FJ-1989,KA-2008,SW-1996,TF-2012}).

In this article, we also use some relevant results about of Lie groups \cite{PJF-1977,SMR-2007}, fiber bundles \cite{HD-1966} and lens spaces \cite{HA-2000}.

Firstly, we review the definition of $3$-dimensional lens space $L(p, q)$.

\begin{definition}\label{def:2.5}
Regard $S^3$ as the unit sphere in $\mathbb{C}^2$, i.e.,
$$
S^3=\{(z_1,\,z_2);\,\,|z_1|^2+|z_2|^2=1,\,\,z_1,\,z_2 \in \mathbb{C}\}.
$$
Define a period homeomorphism $f:\,S^3 \rightarrow S^3$ by
$$
f:\,(z_1,\,z_2) \mapsto ({\textrm{e}}^{2\pi{\rm i}/p}z_1,\,{\textrm{e}}^{2\pi{\rm i} q/p}z_2),\qquad
\forall\,(z_1,\,z_2) \in S^3,
$$
where $p,\,q\in \mathbb{Z}$ and ${\rm gcd}\,(p, q)=1$.
Then we denote the quotient space $S^3/f$ by $L(p, q)$ which is called lens space.
\end{definition}

Secondly, we review some relevant concepts and results about compact connected Lie groups.
For any compact connected Lie group $G$, there exists some maximal commutative subgroup homeomorphic to $T^n$ which is called the maximal torus of $G$. We denote the maximal torus of $G$ by $T_G$.
It is well-known that the maximal torus of any compact connected Lie group may be not unique, and a compact connected Lie group $G$ is commutative if and only if
$$
G=T_G \cong T^n.
$$
\begin{proposition}\label{prop:3}
Fix one maximal torus $T_G$ of $G$. Then for any $g \in G$, there exist some $t \in T_G, s \in G$ such that $sg=ts$.
\end{proposition}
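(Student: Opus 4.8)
The plan is to deduce this from two classical structural facts about compact connected Lie groups, both of which are standard and available in the references \cite{PJF-1977,SMR-2007}: first, that every element of $G$ lies in \emph{some} maximal torus; and second, that any two maximal tori of $G$ are conjugate by an inner automorphism. Granting these, the proposition is immediate: given $g\in G$, pick a maximal torus $T'$ with $g\in T'$, then pick $s\in G$ with $s\,T'\,s^{-1}=T_G$, and set $t:=sgs^{-1}$; then $t\in T_G$ and $sg=ts$, which is exactly the asserted identity. So the whole content is in the two quoted inputs, and the deduction itself is a one-line rearrangement.

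For completeness I would indicate why every $g\in G$ lies in a maximal torus. Since $G$ is compact it carries a bi-invariant Riemannian metric; its geodesics through the identity are precisely the one-parameter subgroups $t\mapsto\exp(tX)$, $X\in\mathfrak{g}$, and compactness gives geodesic completeness, so by Hopf--Rinow every point of the connected group $G$ is joined to $e$ by such a geodesic. Hence $\exp:\mathfrak{g}\to G$ is surjective and we may write $g=\exp(X)$. The closure $S$ of $\{\exp(tX):t\in\mathbb{R}\}$ is then a compact connected abelian subgroup, i.e.\ a torus, containing $g$, and enlarging $S$ to a maximal commutative subgroup yields a maximal torus $T'$ with $g\in T'$. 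Combined with the conjugacy of maximal tori, this finishes the argument as above.

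The only genuine obstacle is the two structural theorems themselves --- surjectivity of $\exp$ (equivalently, that each element lies in a torus) and the conjugacy of maximal tori; these are the substantive classical results, and for the purposes of this paper I would simply cite them from \cite{PJF-1977,SMR-2007} rather than reprove them. Once they are in hand, nothing further is required beyond conjugating $g$ into $T_G$ and rewriting the conjugation relation as $sg=ts$.
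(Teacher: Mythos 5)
Your proposal is correct: the statement is exactly the classical maximal torus theorem (every element of a compact connected Lie group is conjugate into a fixed maximal torus), and your reduction --- put $g$ in some maximal torus via surjectivity of $\exp$, conjugate that torus onto $T_G$, and set $t=sgs^{-1}$ so that $sg=ts$ --- is the standard argument. The paper itself offers no proof, presenting the proposition as a reviewed fact from the cited Lie group references, so your write-up simply supplies the same classical input the authors are implicitly invoking.
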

Assume that $g,\,s \in G$ and $t \in T_G$ satisfy $sg=ts$. Then we study the left actions $L_g,\,L_t$, and $L_s$.
For any $g' \in G$, we have $sgg'=tsg'$, and then
$$
L_s \circ L_g(g')=L_t \circ L_s(g'),
\quad\hbox{i.e.}\quad
L_s \circ L_g=L_t \circ L_s.
$$
Obviously, every left action is a self-homeomorphism of $G$.
Hence we have the following corollary.

\begin{corollary}\label{GT}
Fix one maximal torus $T_G$ of $G$. Then any left action $L_g$ induced by an element $g$ in $G$ can be topologically conjugate to some left action $L_t$ induced by an element $t$ in $T_G$.
\end{corollary}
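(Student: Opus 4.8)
The plan is to read the statement off directly from Proposition \ref{prop:3} together with the elementary computation recorded just before the corollary, so the proof will be very short. First I would invoke Proposition \ref{prop:3} for the given element $g \in G$ to obtain elements $s \in G$ and $t \in T_G$ satisfying $sg = ts$. By the displayed identity preceding the corollary, this conjugacy relation in $G$ immediately transfers to the relation $L_s \circ L_g = L_t \circ L_s$ between the corresponding left translation self-maps of $G$.

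Next I would observe that $L_s$ is a self-homeomorphism of $G$: it is continuous because multiplication in $G$ is continuous, and $L_{s^{-1}}$ is a continuous two-sided inverse for it, since $L_a \circ L_b = L_{ab}$ gives $L_s \circ L_{s^{-1}} = L_{s^{-1}} \circ L_s = \mathrm{id}_G$. Composing the identity $L_s \circ L_g = L_t \circ L_s$ on the right with $L_s^{-1} = L_{s^{-1}}$ then yields
$$
L_s \circ L_g \circ L_s^{-1} = L_t,
$$
which is precisely the assertion that $L_g$ and $L_t$ are topologically conjugate via the homeomorphism $L_s$, with $t \in T_G$.

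Since every step is immediate, I do not anticipate any genuine obstacle. The only point worth flagging is that the conjugating map produced here is itself a left translation, so the argument lives entirely at the topological-group level and needs no appeal to smoothness or to the manifold structure; one may additionally remark that $L_s$ is in fact a diffeomorphism, which will be relevant later when the topological, smooth, and algebraic conjugacy classifications are compared, but for the corollary as stated the homeomorphism property is all that is required.
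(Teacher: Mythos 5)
Your proof is correct and coincides with the paper's argument: the authors likewise take $s,t$ from Proposition \ref{prop:3} with $sg=ts$, pass to $L_s\circ L_g=L_t\circ L_s$, and use that the left translation $L_s$ is a self-homeomorphism of $G$. Your explicit verification that $L_{s^{-1}}$ inverts $L_s$ just makes precise what the paper states as obvious.
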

Therefore, it suffices to consider the left actions induced by the elements in the set $\mathfrak{M}_{T_G}=\{L_g:\,G\rightarrow G;\,\,g \in T_G\}$ to classify all left actions on $G$.

\begin{definition}\label{def:3}
Assume that $T_G$ is a maximal torus of a compact connected Lie group $G$, and $\Phi:\,T_G \rightarrow T^n$ is an isomorphism. Then $(T_G, \Phi)$ is called a maximal torus representation of $G$.
For every left action $L_g\in\mathfrak{M}_{T_G}$, set $f=\Phi\circ L_g|_{T_G}\circ\Phi^{-1}$.
Since $T_G$ and $T^n$ are both commutative, we see that $f:\,T^n \rightarrow T^n$ is a rotation (left action) of the normal $n$-dimensional torus $T^n$. Then we define
$$
\rho(L_g)\triangleq\rho(f),
$$
and $\rho(L_g)$ is called the rotation vector (number) of $L_g$ under the representation $(T_G, \Phi)$.
\end{definition}

We focus on some noncommutative compact connected Lie groups with topological dimension 3 or 4, and utilize the rotation vectors (numbers) defined above to give the topologically conjugate classifications of the left actions on these Lie groups. In addition, it is related to how the isomorphisms, induced by homeomorphisms on lens spaces $L(p, -1)$ and on $L(p, -1)\times S^1$, act on the fundamental groups.
Furthermore, for the left actions on these Lie groups,
we study the relationship among their topological conjugacy, algebraic conjugacy and smooth conjugacy.

\begin{remark}
For convenience, we only investigate the left actions on some compact connected Lie groups, and the relevant conclusions of the right actions on these Lie groups are the same as left actions.
\end{remark}

\section{Preliminaries}
In this section, we introduce some concepts, and prove a few useful results.

\begin{lemma}\label{etoe}
Suppose $G$ is a compact connected Lie group,
$L_g,\,L_{g'}$ are two left translation actions on $G$.
If $L_g$ and $L_{g'}$ are topologically conjugate,
there exists some topological conjugacy $h$ from $L_g$ to $L_{g'}$ such that
$$
h(e)=e,
$$
where $e$ is the identity element of $G$.
\end{lemma}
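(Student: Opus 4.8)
The plan is to start from an arbitrary topological conjugacy $h_0$ from $L_g$ to $L_{g'}$, so that $h_0 \circ L_g = L_{g'} \circ h_0$, and then correct $h_0$ by composing it with a suitable left translation so that the identity $e$ is sent to $e$. The natural candidate is $h = L_{h_0(e)^{-1}} \circ h_0$, which is again a homeomorphism of $G$ and plainly satisfies $h(e) = h_0(e)^{-1} \cdot h_0(e) = e$. So the only thing to verify is that this new $h$ is still a conjugacy from $L_g$ to $L_{g'}$.

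First I would record the elementary fact that left translations commute with one another on the \emph{right}: for any $a, b \in G$ one has $L_a \circ L_b = L_{ab}$, and more to the point, the conjugating map in $h_0 \circ L_g = L_{g'} \circ h_0$ interacts cleanly with left translation on the target. Concretely, set $p = h_0(e)$. Then compute
$$
h \circ L_g = L_{p^{-1}} \circ h_0 \circ L_g = L_{p^{-1}} \circ L_{g'} \circ h_0.
$$
Now the key algebraic step is to move $L_{p^{-1}}$ past $L_{g'}$: since $L_{p^{-1}} \circ L_{g'} = L_{p^{-1} g'}$ and $L_{g'} \circ L_{p^{-1}} = L_{g' p^{-1}}$, these are \emph{not} equal in general, so a naive commutation fails. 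The correct move instead is to observe that $h_0 \circ L_g = L_{g'} \circ h_0$ forces a compatibility between $g$, $g'$ and $p$ when both sides are evaluated at $e$: namely $h_0(g) = g' p$. This does not by itself give what we want, so the honest route is different.

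The clean approach: note that for \emph{any} homeomorphism $h_0$ and any $a \in G$, the map $L_a \circ h_0$ conjugates $L_g$ to $L_{a g' a^{-1}}$ only if $h_0$ already conjugates $L_g$ to $L_{g'}$ and $a$ centralizes $g'$ — again too restrictive. So instead I would exploit that left translations are \emph{equivariant for right translations}: define $R_b(x) = xb$; then $L_a \circ R_b = R_b \circ L_a$ for all $a,b$, and one checks directly that if $h_0$ conjugates $L_g$ to $L_{g'}$, then so does $R_b \circ h_0$ for every $b \in G$, because $R_b \circ h_0 \circ L_g = R_b \circ L_{g'} \circ h_0 = L_{g'} \circ R_b \circ h_0$. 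Thus the corrected map should be built from a \emph{right} translation: take $h = R_{h_0(e)^{-1}} \circ h_0$, which is a homeomorphism, still conjugates $L_g$ to $L_{g'}$ by the identity just displayed, and satisfies $h(e) = h_0(e)\,h_0(e)^{-1} = e$, as required.

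The main obstacle — and the only genuinely substantive point — is precisely the observation that one must post-compose with a \emph{right} translation rather than a left one: left translations on $G$ commute with $L_{g'}$ only up to conjugation (since $G$ is non-commutative), whereas right translations commute with every left translation, which is exactly the property that preserves the conjugacy relation between the two left actions. Once this is recognized, the verification $R_b \circ L_{g'} = L_{g'} \circ R_b$ is a one-line associativity check $ (g' x) b = g' (x b)$, and the proof is complete. I would close by remarking that $h$ inherits continuity and bijectivity from $h_0$ and $R_{h_0(e)^{-1}}$ automatically, since right translation by a fixed element is a homeomorphism of $G$.
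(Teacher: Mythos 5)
Your proposal is correct and arrives at exactly the paper's construction: the corrected conjugacy $h(u)=h_0(u)\,h_0(e)^{-1}$ is precisely $R_{h_0(e)^{-1}}\circ h_0$, and the verification via the commutation of right translations with $L_{g'}$ is the same one-line computation the paper performs. The only difference is expository — your detour through the failed left-translation attempt before landing on the right-translation fix.
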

\begin{proof}
If $h$ is a topological conjugacy from $L_g$ to $L_{g'}$ (maybe $h(e)\neq e$),
we can define a self-homeomorphism of $G$ by
$$
h'(u)=h(u){h(e)}^{-1},\qquad \forall\, u \in G.
$$
It is easy to see that  $h'(e)=e$, and for any $u \in G$, we have
$$
h' \circ L_g(u)=h(L_g(u)){h(e)}^{-1}=L_{g'} \circ h(u){h(e)}^{-1}=L_{g'} \circ h'(u).
$$
Therefore, $h'$ is also a topological conjugacy from $L_g$ to $L_{g'}$ such that $h'(e)=e$.
\end{proof}

In this article, we always choose those topological conjugacies preserving the identity element.

\begin{lemma}\label{lem:1}
Assume that $G_1,\,G_2$ are topological groups,
$G_1',\,G_2'$ are the subgroups of $G_1$ and $G_2$, respectively,
and $h:\,G_1 \rightarrow G_2$ is a topological conjugacy from $\Gamma_{G_1'}$ to $\Gamma_{G_2'}$ satisfying $h(e_1)=e_2$,
where $\Gamma_{G_i'}$ is the $G_i'$ group action on $G_i$, and ${e_i}$ is the identity element of $G_i$, for $i=1,2$. Let
$\pi_1:\,G_1 \rightarrow G_1/G_1',\,\,\,\pi_2:\,G_2 \rightarrow G_2/G_2'$ be the quotient maps.
Then there exists some homeomorphism $h_\pi:\,G_1/G_1' \rightarrow G_2/G_2'$ induced by $h$ such that
$$
\pi_2 \circ h=h_\pi \circ \pi_1.
$$
In particular, if $G_1',\,G_2'$ are normal subgroups and $h$ is an isomorphism,
then $h_\pi$ is also an isomorphism induced by $h$.
\end{lemma}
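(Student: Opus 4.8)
The plan is to construct $h_\pi$ directly as the descent of $h$ to the quotient spaces, using the conjugacy relation to guarantee that $h$ sends $G_1$-cosets to $G_2$-cosets. First I would observe that since $h$ conjugates the $G_1$-action on $H_1$ to the $G_2$-action on $H_2$, for every $g_1\in G_1$ there is a corresponding $g_2\in G_2$ with $h\circ L_{g_1}=L_{g_2}\circ h$ (here $L$ denotes the relevant translation in the group action $\Gamma$); evaluating at $e_1$ and using $h(e_1)=e_2$ gives $h(g_1)=g_2\in G_2$, so in fact $h(G_1)\subseteq G_2$, and by the same argument applied to $h^{-1}$ we get $h(G_1)=G_2$. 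More importantly, applying $h\circ L_{g_1}=L_{g_2}\circ h$ at an arbitrary $u\in H_1$ shows that $h(g_1 u)=g_2\, h(u)$; thus $h$ carries the coset $G_1 u$ into the coset $G_2\, h(u)$, and the same for $h^{-1}$, so $h$ induces a well-defined bijection $h_\pi:H_1/G_1\to H_2/G_2$ by $h_\pi(G_1 u)=G_2\,h(u)$, which is exactly the relation $\pi_2\circ h=h_\pi\circ\pi_1$.

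Next I would check that $h_\pi$ is a homeomorphism. Continuity is immediate from the universal property of the quotient topology: $\pi_2\circ h$ is continuous and constant on $G_1$-cosets, hence factors through a continuous map $h_\pi$ on $H_1/G_1$. Running the identical argument with $h^{-1}$ in place of $h$ produces a continuous inverse $(h^{-1})_\pi$, and the two compositions are the identity because $h\circ h^{-1}=\mathrm{id}$ and $h^{-1}\circ h=\mathrm{id}$ descend to the quotients. Hence $h_\pi$ is a homeomorphism.

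For the final assertion, suppose $G_1,G_2$ are normal and $h$ is a group isomorphism. Then $H_1/G_1$ and $H_2/G_2$ are groups, and I would simply verify that $h_\pi$ respects the group operation: for cosets $G_1 u$ and $G_1 v$,
$$
h_\pi\big((G_1 u)(G_1 v)\big)=h_\pi(G_1\, uv)=G_2\, h(uv)=G_2\, h(u)h(v)=(G_2\, h(u))(G_2\, h(v))=h_\pi(G_1 u)\,h_\pi(G_1 v),
$$
using that $h$ is a homomorphism. Since we already know $h_\pi$ is a homeomorphic bijection, it is an isomorphism of topological groups.

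The only genuinely delicate point is the first one: extracting, from the single conjugacy $h\circ\Gamma_{G_1}=\Gamma_{G_2}\circ h$ together with $h(e_1)=e_2$, the pointwise statement $h(g_1 u)=g_2\,h(u)$ that makes the coset map well-defined. This rests on unwinding what "$h$ is a topological conjugacy from the $G_1$-action to the $G_2$-action" means element-by-element — i.e.\ that the conjugacy intertwines the individual translations, not merely the orbit structure — and then reading off $g_2$ as $h(g_1)$ by plugging in the identity. Everything afterward is formal and follows from the universal property of quotient maps.
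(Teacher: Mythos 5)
Your proof is correct and complete; the paper itself omits the argument entirely (it says only ``The proof is simple''), and what you have written is exactly the standard descent argument the authors evidently had in mind: read the conjugacy element-by-element to get $h(g_1u)=h(g_1)h(u)$ with $h(G_1)=G_2$, conclude that $h$ permutes the orbits (cosets $G_1u$, matching the left-action convention), and then invoke the universal property of the quotient topology for continuity of $h_\pi$ and of its inverse. You also correctly isolate the one genuinely non-formal point, namely that ``topological conjugacy of the group actions'' must be read as intertwining the individual translations, which is how the lemma is used throughout the paper.
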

The proof is simple.

\begin{remark}
In the present paper, for any topological group $G$ and its subgroup $G'$,
we always use the right coset of $G'$ to define the equivalence relation in $G$.
That means for any $g_1,\,g_2 \in G$, $[g_1]=[g_2]$ if and only if there exists some $g' \in G$ such that $g'g_1=g_2$, where $[g_1],\,[g_2]\in G/G'$ are the equivalence class of $g_1$ and $g_2$, respectively.
\end{remark}

\begin{lemma}\label{lem:2}\cite{SH-2010}
Suppose that $f$ is a degree 1 self-map on $L(p, q)$.
Then $f$ is homotopic to an orientation-preserving homeomorphism if and only if
$$
f_*(l)=\left\{
\begin{array}{l}
\pm\,l,\qquad\qquad p \nmid q^2-1,
\\[0.25 cm]
\pm\,l, \pm ql,\qquad p \mid q^2-1,
\end{array}
\right.
\qquad \forall\,l \in \pi_1(L(p, q)),
$$
where $f_*$ is the endomorphism of the fundamental group $\pi_1(L(p, q))$ induced by $f$.
\end{lemma}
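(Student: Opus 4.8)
The plan is to reduce the claim to a question about the residue $a \bmod p$ for which $f_*(l)=al$, where $\pi_1(L(p,q))\cong\mathbb{Z}/p\mathbb{Z}=\langle l\rangle$, and then prove the two implications separately. Since $f$ has degree $1$ it induces a surjection, hence (the group being finite cyclic) an isomorphism, on $\pi_1$, so $a$ is a unit. Two preliminary remarks organize everything. First, an orientation-preserving homeomorphism has degree $+1$, so ``degree $1$'' is exactly the hypothesis under which the conclusion can hold. Second, Poincar\'e duality forces $a^2\equiv1\pmod p$: for a degree-$d$ self-map of $L(p,q)$ the induced map on the linking form scales $\lambda(l,l)$ by $a^2$ on one side and by $d$ on the other, and $\lambda(l,l)$ has exact order $p$ in $\mathbb{Q}/\mathbb{Z}$, so $a^2\equiv d\pmod p$; here $d=1$. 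I would also record that the homotopy class of a degree-$1$ self-map is determined by $a$: with the standard CW structure on $L(p,q)$ (one cell in each dimension $0,1,2,3$), the $2$-skeletal restriction is pinned by $a$ because $\pi_2(L(p,q))=\pi_2(S^3)=0$, and the extensions over the top cell form a torsor over $\pi_3(L(p,q))=\pi_3(S^3)=\mathbb{Z}$, generated by the $p$-fold covering map $S^3\to L(p,q)$ of degree $p$; so two degree-$1$ extensions differ by a class of degree $0$ and are homotopic.

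For the ``if'' direction I would write down the homeomorphisms explicitly in the model $L(p,q)=S^3/\mathbb{Z}_p$, where $\zeta=\ee^{2\pi\ii/p}$ acts by $(z_1,z_2)\mapsto(\zeta z_1,\zeta^q z_2)$. The identity gives $a=1$. Coordinatewise conjugation $(z_1,z_2)\mapsto(\bar z_1,\bar z_2)$ is orientation-preserving on $S^3$ and conjugates the deck generator to its inverse, hence descends to an orientation-preserving homeomorphism of $L(p,q)$ with $a=-1$. When $p\mid q^2-1$, i.e.\ $q\equiv q^{-1}\pmod p$, the coordinate swap $(z_1,z_2)\mapsto(z_2,z_1)$ is orientation-preserving on $S^3\subset\mathbb{C}^2$ and normalizes the deck group, carrying its generator to the $q$-th power, so it descends to an orientation-preserving homeomorphism with $a=q$; composing it with the previous one gives $a=-q$. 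Combined with the uniqueness of the homotopy class above, any degree-$1$ self-map whose $a$ lies in the stated set is homotopic to one of these homeomorphisms.

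For the ``only if'' direction, suppose $f$ is homotopic to an orientation-preserving homeomorphism $g$. A homeomorphism is a simple homotopy equivalence (topological invariance of Whitehead torsion), so $f$ is a simple homotopy self-equivalence of $L(p,q)$; I would detect this using Reidemeister torsion. For each nontrivial character of $\mathbb{Z}/p$, corresponding to a primitive $p$-th root of unity $\zeta$, the torsion of $L(p,q)$ equals $(\zeta-1)^{-1}(\zeta^{\bar q}-1)^{-1}$ modulo $\pm\langle\zeta\rangle$, where $q\bar q\equiv1\pmod p$, and $f$ transforms this torsion by the substitution $\zeta\mapsto\zeta^a$. Hence $f$ simple forces $(\zeta^a-1)(\zeta^{a\bar q}-1)\doteq(\zeta-1)(\zeta^{\bar q}-1)$ for every primitive $p$-th root $\zeta$, and by Franz's independence lemma --- the only multiplicative relations among the numbers $\zeta^j-1$, modulo roots of unity, are those coming from $j\leftrightarrow-j$ --- the multiset $\{\{a,-a\},\{a\bar q,-a\bar q\}\}$ must equal $\{\{1,-1\},\{\bar q,-\bar q\}\}$ in $\mathbb{Z}/p$. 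A short case analysis, using $a^2\equiv1\pmod p$ from the first paragraph to discard the spurious possibility $\bar q^2\equiv-1$, then gives $a\equiv\pm1\pmod p$, or else $a\equiv\pm q\pmod p$ together with $p\mid q^2-1$ (for then $\bar q\equiv q$), which is the asserted dichotomy.

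The main obstacle is this last step: one must recognize that the relevant invariant is the simple homotopy type --- equivalently Reidemeister/Whitehead torsion --- and then push the resulting cyclotomic identity through Franz's lemma. An alternative proof of ``only if'' would lift $g$ to an orientation-preserving $\mathbb{Z}/p$-equivariant homeomorphism of $S^3$ intertwining the linear actions with weights $(1,q)$ and $(a,aq)$, and invoke rigidity of free linear actions on $S^3$ (equivariantly homeomorphic actions are conjugate in $O(4)$, via de Rham's theorem or geometrization) to force the weight tuples to agree up to a coordinate swap and a simultaneous sign change; but that rigidity is itself a substantial input, so I would prefer the torsion argument.
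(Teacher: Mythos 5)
The paper does not actually prove this lemma --- it is quoted from the cited reference \cite{SH-2010} --- so there is no internal proof to compare against; what follows is an assessment of your argument on its own merits. Your proposal is a correct and essentially complete self-contained proof, and it is the classical one for lens spaces: the linking-form computation gives $a^2\equiv 1\pmod p$ for the multiplier $a$ with $f_*(l)=al$; obstruction theory in the style of Olum (using $\pi_2(L(p,q))=0$ and the fact that the generator of $\pi_3(L(p,q))$, namely the universal covering projection, has degree $p$) shows that a degree-$1$ self-map is determined up to homotopy by $a$; the isometries $(z_1,z_2)\mapsto(\bar z_1,\bar z_2)$ and, when $q^2\equiv1\pmod p$, $(z_1,z_2)\mapsto(z_2,z_1)$ realize $a=-1$ and $a=\pm q$ by orientation-preserving homeomorphisms (your verification that the swap normalizes the deck group exactly when $p\mid q^2-1$ is the right computation); and the converse follows from topological invariance of Whitehead torsion together with the Reidemeister--Franz torsion computation and Franz's independence lemma. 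The only points needing care in a full write-up are (i) the normalization and indeterminacy $\pm\zeta^j$ of the torsion $(\zeta-1)(\zeta^{\bar q}-1)$, on which your multiset case analysis depends, and (ii) the degenerate case $p=2$ in the step where $q'^2\equiv-1$ is excluded via $a^2\equiv1$ --- both harmless. Be aware that \cite{SH-2010} establishes a statement for all prime $3$-manifolds, so your lens-space-specific route is more elementary than the cited source, at the cost of invoking Chapman's theorem (or, alternatively, geometric rigidity of lens spaces) to know that a homeomorphism is a simple homotopy equivalence.
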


\begin{lemma}\label{lem:3}
Assume that $f_1$ is a rotation of $T^2$,
$f_2$ is a rotation of $S^1$,
$f:\,T^2 \rightarrow S^1$ is a continuous surjection,
and
$$
f \circ f_1=f_2 \circ f,
\qquad \rho(f_1)=\left(\arraycolsep=1pt\begin{array}{c}
\theta\\
\varphi
\end{array}\right),
\qquad \rho(f_2)=\varphi'.
$$
For any $(m, n)\in\pi_1(T^2)\cong\mathbb{Z}\oplus\mathbb{Z}$,
if $f_*:\,\pi_1(T^2) \rightarrow \pi_1(S^1)$ satisfies
$$
f_*(m, n)=pm+qn,\qquad p,\,q\in\mathbb{Z},
$$
we have
$$
\varphi'=p\theta+q\varphi\quad({\rm mod}\,\,\,\mathbb{Z}),\qquad p,\,q\in\mathbb{Z}.
$$
\end{lemma}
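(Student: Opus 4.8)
The plan is to lift every map in sight to the universal covers and to exploit that, $f_1$ and $f_2$ being rotations, their lifts are genuine translations. Let $\sigma:\mathbb{R}^2\to T^2$ and $\tau:\mathbb{R}\to S^1$ be the universal covering maps, given coordinatewise by $t\mapsto\ee^{2\pi\ii t}$, so that the deck groups are the integer translations $\mathbb{Z}^2$ and $\mathbb{Z}$. Since $\rho(f_1)=\bigl(\begin{smallmatrix}\theta\\\varphi\end{smallmatrix}\bigr)$, the rotation $f_1$ has the lift $F_1(\x)=\x+(\theta,\varphi)$, and likewise $f_2$ has the lift $F_2(t)=t+\varphi'$. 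As $\mathbb{R}^2$ is simply connected, $f\circ\sigma:\mathbb{R}^2\to S^1$ lifts through $\tau$ to some continuous $\tilde f:\mathbb{R}^2\to\mathbb{R}$, i.e.\ $\tau\circ\tilde f=f\circ\sigma$. I would record the one standard consequence of the hypothesis on $f_*$ that is needed: under the usual identifications $\pi_1(T^2)\cong\mathbb{Z}^2$ and $\pi_1(S^1)\cong\mathbb{Z}$ coming from $\sigma$ and $\tau$, the assumption $f_*(m,n)=pm+qn$ is precisely the statement that
\[
\tilde f(\x+(m,n))=\tilde f(\x)+pm+qn \qquad\text{for all }\x\in\mathbb{R}^2,\ (m,n)\in\mathbb{Z}^2 .
\]

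Next I would convert the conjugacy relation into a functional equation for $\tilde f$. Both $\tilde f\circ F_1$ and $F_2\circ\tilde f$ are lifts through $\tau$ of the single map $f\circ f_1\circ\sigma=f_2\circ f\circ\sigma$; since $\mathbb{R}^2$ is connected, two such lifts differ by a deck transformation of $\tau$, so there is an integer $k$ with $\tilde f\circ F_1=F_2\circ\tilde f+k$. Because $F_1$ and $F_2$ are translations (in particular $F_2$ commutes with the integer translation by $k$), a one-line induction gives $\tilde f\circ F_1^{\,n}=F_2^{\,n}\circ\tilde f+nk$, that is
\[
\tilde f\bigl(\x+n(\theta,\varphi)\bigr)=\tilde f(\x)+n\varphi'+nk \qquad (n\ge 1).
\]

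Finally I would combine the two displayed identities by a compactness bound. By the first identity the function $g(\x)=\tilde f(\x)-(p\,x_1+q\,x_2)$ is $\mathbb{Z}^2$-periodic and continuous, hence descends to $T^2$ and is bounded, say $|g|\le B<\infty$. Evaluating $g$ at $\x$ and at $\x+n(\theta,\varphi)$ and using the second identity to cancel $\tilde f$ gives $g(\x+n(\theta,\varphi))-g(\x)=n\bigl(\varphi'+k-p\theta-q\varphi\bigr)$, whose left-hand side is at most $2B$ in absolute value for every $n\ge1$; dividing by $n$ and letting $n\to\infty$ forces $p\theta+q\varphi=\varphi'+k$, and since $k\in\mathbb{Z}$ this is exactly $\varphi'=p\theta+q\varphi\ (\mathrm{mod}\ \mathbb{Z})$.

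I do not expect a serious obstacle here: the argument is a standard lifting computation, and the only point that must be handled with care is the translation of the algebraic hypothesis $f_*(m,n)=pm+qn$ into the equivariance property of the chosen lift $\tilde f$ in the first paragraph. This is routine covering-space bookkeeping — one fixes the isomorphisms $\pi_1(T^2)\cong\mathbb{Z}^2$ and $\pi_1(S^1)\cong\mathbb{Z}$ by lifting loops, and uses that both groups are abelian so that the basepoint plays no role — but it is worth stating explicitly so that the sign and normalization conventions for the rotation vectors and for $f_*$ stay consistent.
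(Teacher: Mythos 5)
Your proposal is correct and follows essentially the same route as the paper: lift everything to the universal covers, derive the relation $\tilde f\circ F_1=F_2\circ\tilde f+k$ for an integer $k$, iterate it, and use the fact that $\tilde f(\x)-(px_1+qx_2)$ is $\mathbb{Z}^2$-periodic and hence bounded to extract the identity $\varphi'=p\theta+q\varphi\ (\mathrm{mod}\ \mathbb{Z})$ in the limit. The only cosmetic difference is that the paper phrases the final step via the rotation numbers $\rho(F_2)=\bm{A}\rho(F_1)+l$, whereas you cancel directly with the bounded periodic function; the underlying estimate is identical.
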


\begin{proof}
Assume that $F_1:\,\mathbb{R}^2 \rightarrow \mathbb{R}^2,\,\,\,F_2:\,\mathbb{R} \rightarrow \mathbb{R}$
are the lifts of $f_1$ and $f_2$ under the universal covering maps defined by
$$
F_1(\bm{x})=\bm{x}+\left(\arraycolsep=1pt\begin{array}{c}
\theta\\
\varphi
\end{array}\right),\quad \forall\, {\bm x} \in \mathbb{R}^2,
\qquad
F_2(x)=x+\varphi',\quad \forall\, x \in \mathbb{R},
$$
respectively.
Since $\pi_1(\mathbb{R})$ and $\pi_1(\mathbb{R}^2)$ are both trivial,
and $f \circ \pi$ is a continuous map,
according to the map lifting theorem, there exists some continuous map $F:\,\mathbb{R}^2 \rightarrow \mathbb{R}$ such that
$$
\pi' \circ F=f \circ \pi,
$$
where $\pi:\,\mathbb{R}^2 \rightarrow T^2,\,\,\, \pi':\,\mathbb{R} \rightarrow S^1$ are the universal covering maps.
Through a simple calculation, we obtain
$$
\pi' \circ (F \circ F_1)=(f \circ f_1) \circ \pi,\qquad \pi' \circ (F_2 \circ F)=(f_2 \circ f) \circ \pi.
$$
And together with the condition $f \circ f_1=f_2 \circ f$, we have
$$
\pi' \circ (F \circ F_1)=\pi' \circ (F_2 \circ F).
$$
Therefore, it follows from the definition of universal covering map $\pi'$ that there exists some integer vector $l \in \mathbb{Z}$ such that
\begin{align}\label{eq:1}
F_2 \circ F=F \circ F_1+l.
\end{align}
Let $F_2$ act on the both sides of \eqref{eq:1}. Then we get
$$
F_2^2 \circ F=F_2 \circ (F \circ F_1+l).
$$
and the definition of $F_2$ indicates that
$$
F_2(x+l)-F_2(x)=l,
\qquad \forall\,x \in \mathbb{R}.
$$
so
\begin{align*}
F_2^2 \circ F
&=
F_2 \circ (F \circ F_1+l)
=F_2 \circ (F \circ F_1)+l
\\
&=
(F_2 \circ F) \circ F_1+l
=(F \circ F_1) \circ F_1+l+l
=F \circ F_1^2+2l.
\end{align*}
Thus, let $F_2$ act on the both sides of \eqref{eq:1} by $n$ times, we obtain
$$
F_2^n \circ F=F \circ F_1^n+nl,
$$
i.e.,
\begin{align}\label{eq:2}
\dfrac{F_2^n \circ F}{n}=\dfrac{F \circ F_1^n}{n}+l.
\end{align}
It is known that for any $(m, n)\in\pi_1(T^2) \cong \mathbb{Z}\oplus\mathbb{Z}$,
$$
f_*(m, n)=pm+qn,\qquad p,\,q\in\mathbb{Z}.
$$
Then by the definitions of the fundamental groups of $T^2$ and $S^1$,
one can see that
$$
F(\bm{x}+\bm{k})-F(\bm{x})=pm+qn=\bm{A}\bm{k},
\qquad \forall\,\bm{k}=\left(\arraycolsep=1pt\begin{array}{c}
m\\
n
\end{array}\right) \in \mathbb{Z}^2,\,\,\,\bm{x}\in\mathbb{R}^2,
$$
where $\bm{A}=\left(\arraycolsep=3pt\begin{array}{cc}
p&q
\end{array}\right)$ is a $1\times2$ integer matrix,
and then
$$
F(\bm{x}+\bm{k})-\bm{A}(\bm{x}+\bm{k})=F(\bm{x})-\bm{A}\bm{x}.
$$
This fact implies that $F(\bm{x})-\bm{A}\bm{x}$ is a continuous periodic function.
So there exists some $M > 0$ such that
$$
\|F(\bm{x})-\bm{A}\bm{x}\| \leq M,
$$
and then
$$
\|F(F_1^n(\bm{x}))-\bm{A}F_1^n(\bm{x})\| \leq M.
$$
Consequently, we have
$$
{\lim\limits_{n\to\infty}}\dfrac{\|F(F_1^n(\bm{x}))-\bm{A}F_1^n(\bm{x})\|}{n}=0,
$$
i.e.,
$$
\lim\limits_{n\to\infty}\dfrac{F(F_1^n(\bm{x}))}{n}
=\lim\limits_{n\to\infty}\dfrac{\bm{A}F_1^n(\bm{x})}{n}.
$$
As $n\to\infty$, \eqref{eq:2} indicates that
$$
\lim\limits_{n\to\infty}\dfrac{F_2^n(F(\bm{x}))}{n}
=\lim\limits_{n\to\infty}\dfrac{F(F_1^n(\bm{x}))}{n}+l
=\lim\limits_{n\to\infty}\dfrac{\bm{A}F_1^n(\bm{x})}{n}+l,
$$
It follows from Definition \ref{def:1} and Definition \ref{def:2} that
$$
\rho(F_2)=\bm{A}\rho(F_1)+l, \qquad l \in \mathbb{Z},
$$
and hence
$$
\rho (f_2)={\bm A}\rho (f_1)\quad({\rm mod}\,\,\,\mathbb{Z}),
$$
i.e.,
$$
\varphi'=p\theta+q\varphi\quad({\rm mod}\,\,\,\mathbb{Z}),\qquad p,\,q\in\mathbb{Z}.
$$
\end{proof}

\begin{lemma}\label{lem:4}
Let $p,\,q \in \mathbb{Z}$ with ${\rm gcd}\,(p, q)=1$.
Suppose that
$$
G=\left\{\left(\begin{array}{cc}
z\omega&0\\
0&\bar{z}
\end{array}\right);\,\,\,z^p\omega^q=1,\,\,\,z,\,\omega \in \mathbb{C},\,\,\,|z|=|\omega|=1\right\}.
$$
Then $G$ is not only a subgroup of ${\rm U}(2)$ but also a simple closed curve homeomorphic to $S^1$, and
$$
{{\rm U}}(2)/G \cong L(p, -1),
$$
where $L(p, -1)$ is a $3$-dimensional lens space.
\end{lemma}

\begin{proof}
In fact, it is obvious that $G$ is a subgroup of ${\rm U}(2)$,
and together with the condition ${\rm gcd}\,(p, q)=1$, one can see that $G$ is a simple closed curve, so $G\,\cong\,S^1$.

Regard $S^3$ as the unit sphere in quaternion space, and then denote the elements of $S^3$ by $z+{\rm j}z'$, where $|z|^2+|z'|^2=1,\,\,z,\,z' \in \mathbb{C}$.
Notice that ${\rm SU}(2) \cong S^3$ can be obtained by gluing the outside surfaces of two solid tori denoted by $D^2_+ \times S^1$ and $D^2_- \times S^1$, where $D^2_+,\,D^2_-$ are two unit closed disks,
and then define continuous maps $f_1:\,D^2_+ \times S^1 \rightarrow S^3,
\,f_2:\,D^2_- \times S^1 \rightarrow S^3$ by
\begin{gather*}
f_1:\,(\lambda_1, z_1)\mapsto\dfrac{z_1(1+{\rm j}\lambda_1)}{\sqrt{|\lambda_1|^2+1}},
\qquad \forall\, (\lambda_1, z_1) \in D^2_+ \times S^1,
\\
f_2:\,(\lambda_2, z_2)\mapsto\dfrac{z_2(\lambda_2+{\rm j})}{\sqrt{|\lambda_2|^2+1}},
\qquad \forall\, (\lambda_2, z_2) \in D^2_- \times S^1,
\end{gather*}
respectively.
Set
$$
f=f_1 \sqcup f_2:\,D^2_+ \times S^1 \sqcup D^2_- \times S^1 \rightarrow S^3,
$$
where ``$\sqcup$'' means disjoint union.
Obviously, $f$ is a surjection, so $f$ can induce an equivalence relation ``$\sim$'' defined by
$$
(\lambda_1, z_1) \sim (\lambda_2, z_2) \Longleftrightarrow
f_1(\lambda_1, z_1)=f_2(\lambda_2, z_2).
$$
Then
$$
D^2_+ \times S^1 \sqcup D^2_- \times S^1/\sim\,\,\,\cong S^3.
$$
According to the equation $f_1(\lambda_1, z_1)=f_2(\lambda_2, z_2)$, we get
$$
\left\{
\begin{array}{l}
\dfrac{z_1}{\sqrt{|\lambda_1|^2+1}}=\dfrac{z_2\lambda_2}{\sqrt{|\lambda_2|^2+1}},
\\[0.25 cm]
\dfrac{z_1\bar{\lambda_1}}{\sqrt{|\lambda_1|^2+1}}=\dfrac{z_2}{\sqrt{|\lambda_2|^2+1}},
\end{array}
\right.
\quad\hbox{i.e.}\quad
\left\{
\begin{array}{l}
\lambda_2=\lambda_1,
\\[0.25 cm]
z_2=z_1\bar{\lambda_1}.
\end{array}
\right.
$$
Therefore, we can define an identification map $F:\,\partial D^2_+ \times S^1 \rightarrow \partial D^2_- \times S^1$ by
$$
F:\,(\lambda_1, z_1)\mapsto(\lambda_1, z_1\bar{\lambda_1}),
\qquad \forall\,(\lambda_1, z_1) \in \partial D^2_+ \times S^1.
$$
By this way, we can regard ${\rm SU}(2)\,\cong\,S^3$ as a circle bundle on $S^2$,
and know that every fiber is a maximal torus of ${\rm SU}(2)$.
If we denote the arguments of $\lambda_1,\,\lambda_2,\,z_1,\,z_2$ by $\gamma,\,\gamma',\,\alpha,\,\alpha'$, respectively,
$F$ corresponds to a $2\times2$ matrix which describes the relationship of arguments as follows.
$$
\left(\arraycolsep=1pt\begin{array}{c}
\gamma'\\
\alpha'
\end{array}\right)=\left(\begin{array}{cc}
1&0\\
-1&1
\end{array}\right)\left(\arraycolsep=1pt\begin{array}{c}
\gamma\\
\alpha\
\end{array}\right).
$$
As well-known, ${\rm U}(2) \cong {\rm SU}(2) \times S^1$, so ${\rm U}(2)$ can be regarded as a $T^2$ bundle on $S^2$, and every fiber is a maximal torus of ${\rm U}(2)$.
Here, we state that the symbol ``$\times$'' between two Lie groups in the present paper just denotes Cartesian product in topology, not direct sum in algebra.
Hence by the definition of $F$, we can define an identification map $F':\,\partial D^2_+ \times S^1 \times S^1 \rightarrow \partial D^2_- \times S^1 \times S^1$ in ${\rm U}(2)$ by
$$
F':\,(\lambda_1, z_1, z_1')\mapsto(\lambda_2, z_2, {z_2}'),
\quad \forall\, (\lambda_1, z_1, z_1') \in \partial D^2_+ \times S^1 \times S^1
$$
satisfying
$$
F'(\lambda_1, z_1, {z_1}')=(\lambda_1, z_1\bar{\lambda_1}, {z_1}'),
\quad \forall\, (\lambda_1, z_1, z_1') \in \partial D^2_+ \times S^1 \times S^1.
$$
Similarly, $F'$ corresponds to a $3\times3$ matrix which describes the relationship of arguments as follows.
$$
\left(\arraycolsep=1pt\begin{array}{c}
\gamma'\\
\alpha'\\
\beta'
\end{array}\right)=\left(\begin{array}{ccc}
1&0&0\\
-1&1&0\\
0&0&1
\end{array}\right)\left(\arraycolsep=1pt\begin{array}{c}
\gamma\\
\alpha\\
\beta
\end{array}\right),
$$
where $\beta$ and $\beta'$ are the arguments of ${z_1}'$ and ${z_2}'$, respectively.
The known condition ${\rm gcd}\,(p, q)=1$ indicates that there exist some $m,\,n \in \mathbb{Z}$ such that $mp+nq=1$.
Through a transformation of arguments, we have
$$
\left(\arraycolsep=1pt\begin{array}{c}
\gamma\\
p\alpha+q\beta\\
-n\alpha+m\beta
\end{array}\right)=\left(\begin{array}{ccc}
1&0&0\\
0&p&q\\
0&-n&m
\end{array}\right)\left(\arraycolsep=1pt\begin{array}{c}
\gamma\\
\alpha\\
\beta
\end{array}\right).
$$
Set $\bm{A}=\left(\begin{array}{ccc}
1&0&0\\
0&p&q\\
0&-n&m
\end{array}\right)$. Then
$$
\left(\arraycolsep=1pt\begin{array}{c}
\gamma\\
\alpha\\
\beta
\end{array}\right)={\bm{A}}^{-1}\left(\arraycolsep=1pt\begin{array}{c}
\gamma\\
p\alpha+q\beta\\
-n\alpha+m\beta
\end{array}\right),
\qquad \left(\arraycolsep=1pt\begin{array}{c}
\gamma'\\
\alpha'\\
\beta'
\end{array}\right)={\bm{A}}^{-1}\left(\arraycolsep=1pt\begin{array}{c}
\gamma'\\
p\alpha'+q\beta'\\
-n\alpha'+m\beta'
\end{array}\right).
$$
Thus, we get
$$
\left(\arraycolsep=1pt\begin{array}{c}
\gamma'\\
p\alpha'+q\beta'\\
-n\alpha'+m\beta'
\end{array}\right)={\bm{A}}\left(\begin{array}{ccc}
1&0&0\\
-1&1&0\\
0&0&1
\end{array}\right){\bm{A}}^{-1}\left(\arraycolsep=1pt\begin{array}{c}
\gamma\\
p\alpha+q\beta\\
-n\alpha+m\beta
\end{array}\right).
$$
It follows from a simple calculation that
\begin{align}\label{eq:3}
\left(\arraycolsep=1pt\begin{array}{c}
\gamma'\\
p\alpha'+q\beta'\\
-n\alpha'+m\beta'
\end{array}\right)=\left(\begin{array}{ccc}
1&0&0\\
-p&1&0\\
n&0&1
\end{array}\right)\left(\arraycolsep=1pt\begin{array}{c}
\gamma\\
p\alpha+q\beta\\
-n\alpha+m\beta
\end{array}\right).
\end{align}
According to the definition of the subgroup $G$, it is easy to know that $G$ is included in some maximal torus of ${\rm U}(2)$.
Then by this way, one can see that the quotient space ${{\rm U}}(2)/G$ is also a circle bundle on $S^2$,
that means ${{\rm U}}(2)/G$ can also be obtained by gluing the outside surfaces of two solid tori $D^2_+ \times S^1$ and $D^2_- \times S^1$.
Thus, $F'$ can induce an identification map $F'':\,\partial D^2_+ \times S^1 \rightarrow \partial D^2_- \times S^1$ in ${{\rm U}}(2)/G$.
According to \eqref{eq:3}, we see that $F''$ corresponds to a $2\times2$ matrix which describes the relationship of arguments,
and this matrix is just the matrix in \eqref{eq:3} except the third row and the third column, i.e.,
$$
\left(\arraycolsep=1pt\begin{array}{c}
\gamma'\\
p\alpha'+q\beta'
\end{array}\right)=\left(\begin{array}{cc}
1&0\\
-p&1
\end{array}\right)\left(\arraycolsep=1pt\begin{array}{c}
\gamma\\
p\alpha+q\beta
\end{array}\right).
$$
Then the identification map $F''$ satisfies
$$
F'':\,(\lambda_1, z_1)\mapsto(\lambda_1, z_1\bar{\lambda_1}^p),
\qquad \forall\, (\lambda_1, z_1) \in D^2_+ \times S^1.
$$
Using the above observations, we have
\begin{gather*}
{\rm U}(2) \cong D^2_+ \times S^1 \times S^1 \sqcup D^2_- \times S^1 \times S^1/\sim_1,
\\
{{\rm U}}(2)/G \cong D^2_+ \times S^1 \sqcup D^2_- \times S^1/\sim_2,
\end{gather*}
where ``$\sim_1$'' and ``$\sim_2''$ are the equivalence relations induced by $F'$ and $F''$, respectively.
Denote the elements of ${\rm U}(2)$ and ${\rm U}(2)/G$ by $(z, \textrm{e}^{2\pi{\rm i}\alpha}, \textrm{e}^{2\pi{\rm i}\beta})$ and $(z, \textrm{e}^{2\pi{\rm i}\gamma})$, respectively,
where $|z| \leq 1$, $z \in D^2_+$ or $z \in D^2_-$, and $\alpha,\,\beta,\,\gamma \in [0, 1)$.
Hence it is easy to prove that the quotient map $\pi:\,{\rm U}(2) \rightarrow {\rm U}(2)/G$ satisfies
$$
\pi(z, \textrm{e}^{2\pi{\rm i}\alpha}, \textrm{e}^{2\pi{\rm i}\beta})=(z, \textrm{e}^{2\pi{\rm i}(p\alpha+q\beta)}),
\qquad \forall\, (z, \textrm{e}^{2\pi{\rm i}\alpha}, \textrm{e}^{2\pi{\rm i}\beta}) \in {\rm U}(2).
$$
On the other hand,
define two maps $\pi_+:\,D^2_+ \times S^1 \rightarrow D^2_+ \times S^1$ and $\pi_-:\,D^2_- \times S^1 \rightarrow D^2_- \times S^1$ by
\begin{gather*}
\pi_+:\,(z_1, \lambda_1)\mapsto(z_1, \lambda_1^p),
\qquad \forall\, (z_1, \lambda_1) \in D^2_+ \times S^1,
\\
\pi_-:\,(z_2, \lambda_2)\mapsto(z_2, \lambda_2^p),
\qquad \forall\, (z_2, \lambda_2) \in D^2_- \times S^1,
\end{gather*}
respectively. Then it follows from a simple verification that
$$
\pi_- \circ F=F'' \circ \pi_+.
$$
This fact illustrates that $S^3$ is a $p$-fold covering space of ${{\rm U}}(2)/G$, and the local representation of the covering map from $S^3$ to ${{\rm U}}(2)/G$ is $\{\pi_+,\,\pi_-\}$.
It is well-known that $S^3$ is a $p$-fold covering space of $L(p, -1)$, and according to one definition of $L(p, -1)$, the local representation of the covering map from $S^3$ to $L(p, -1)$ is also $\{\pi_+,\,\pi_-\}$.
Therefore, we obtain
$$
{{\rm U}}(2)/G \cong L(p, -1).
$$
\end{proof}

\begin{lemma}\label{lem:5}
Assume that $B$ is a topological space,
$(E, \pi)$ is the covering space of $B$,
$f,\,g$ are two continuous self-maps of $B$,
$h$ is a self-homeomorphism of $B$, and $f,\,g,\,h$ can be lifted under the covering map $\pi$.
Then $f$ and $g$ are topologically conjugate,
and the homeomorphism $h:\,B \rightarrow B$ is a topological conjugacy from $f$ to $g$, that means
$$
h \circ f=g \circ h
$$
if and only if for any fixed lift $\tilde{f}$ of $f$ and any fixed lift $\tilde{h}$ of $h$,
there exists some certain lift $\tilde{g}$ of $g$ such that $\tilde{f}$ and $\tilde{g}$ are topologically conjugate,
and the homeomorphism $\tilde{h}:\,E \rightarrow E$ is a topological conjugacy from $\tilde{f}$ to $\tilde{g}$, that means
$$
\tilde{h} \circ \tilde{f}= \tilde{g} \circ \tilde{h}.
$$
\end{lemma}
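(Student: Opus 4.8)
The plan is to prove the two implications separately. Both are purely formal consequences of the defining identities of a lift, namely $\pi\circ\tilde f=f\circ\pi$ and $\pi\circ\tilde h=h\circ\pi$, together with one topological input: a lift of a self-homeomorphism of the base is again a self-homeomorphism of the covering space. The latter is exactly the point that needs care --- it can fail over a disconnected cover --- so I would establish it (or simply invoke it) for the connected, indeed simply connected, covers $S^3\to L(p,1)$ and $\mathbb{R}^n\to T^n$ that actually occur in the sequel; a quick argument is that lifts of $h$ and of $h^{-1}$ exist in this situation, and their composites in either order are lifts of $\mathrm{id}_B$, hence deck transformations, which forces $\tilde h$ to be bijective with continuous inverse.

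For the forward implication, assume $h$ is a homeomorphism with $h\circ f=g\circ h$ and fix a lift $\tilde f$ of $f$ and a lift $\tilde h$ of $h$; by the remark above $\tilde h$ is a homeomorphism of $E$. I would then simply \emph{define} $\tilde g:=\tilde h\circ\tilde f\circ\tilde h^{-1}$, so that $\tilde h\circ\tilde f=\tilde g\circ\tilde h$ holds by construction, and the only thing to verify is that $\tilde g$ is a lift of $g$. From $\pi\circ\tilde h=h\circ\pi$ one gets $\pi\circ\tilde h^{-1}=h^{-1}\circ\pi$, whence
\begin{align*}
\pi\circ\tilde g &=\pi\circ\tilde h\circ\tilde f\circ\tilde h^{-1}=h\circ\pi\circ\tilde f\circ\tilde h^{-1}\\
&=h\circ f\circ\pi\circ\tilde h^{-1}=h\circ f\circ h^{-1}\circ\pi=g\circ\pi,
\end{align*}
the last equality using $h\circ f\circ h^{-1}=g$. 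Thus $\tilde g$ is the desired lift of $g$.

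For the converse, assume the right-hand side and pick any lift $\tilde f$ of $f$ and any lift $\tilde h$ of $h$; the hypothesis provides a lift $\tilde g$ of $g$ with $\tilde h\circ\tilde f=\tilde g\circ\tilde h$. Composing this identity on the left with $\pi$ and substituting $\pi\circ\tilde h=h\circ\pi$, $\pi\circ\tilde f=f\circ\pi$ and $\pi\circ\tilde g=g\circ\pi$ gives $h\circ f\circ\pi=g\circ h\circ\pi$, and since $\pi$ is surjective this reduces to $h\circ f=g\circ h$; as $h$ is a homeomorphism by hypothesis, it is a topological conjugacy from $f$ to $g$. The only genuine obstacle, as noted, is the homeomorphism-lifting fact; once that is granted the rest is bookkeeping with the covering identities.
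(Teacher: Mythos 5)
Your proof is correct, and one half of it takes a different route from the paper's. The easy direction (lift identity $\Rightarrow$ base identity, by composing with $\pi$ and using surjectivity) is identical in both arguments. For the other direction, the paper fixes $\tilde f$ and $\tilde h$, observes that $\tilde h\circ\tilde f$ is a lift of $h\circ f=g\circ h$, and then argues that the lifts of $g\circ h$ are exactly the maps $\tilde g_\alpha\circ\tilde h$ as $\tilde g_\alpha$ ranges over the lifts of $g$, so $\tilde h\circ\tilde f$ must coincide with one of them; this is an enumeration-and-matching argument whose hidden content is that \emph{every} lift of $g\circ h$ arises as $\tilde g_\alpha\circ\tilde h$ (a uniqueness-of-lifts fact on a connected cover). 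You instead construct the lift directly as $\tilde g:=\tilde h\circ\tilde f\circ\tilde h^{-1}$ and verify $\pi\circ\tilde g=g\circ\pi$ by a two-line computation; the hidden content in your version is that $\tilde h$ is invertible as a self-map of $E$, which you correctly identify as the one nontrivial topological input and justify via deck transformations. The two approaches trade one unstated covering-space fact for another: the paper needs the classification of lifts of a composite, you need homeomorphism lifting. Yours is arguably the cleaner bookkeeping, and you are more explicit than the paper about where connectedness of the cover is actually used (the lemma as stated is applied only to the covers $S^3\to L(p,-1)$, $\mathbb{R}^n\to T^n$ and the self-covers of ${\rm U}(2)$, all of which are connected, so both proofs are fine in context).
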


\begin{proof}
Firstly, we prove the sufficiency.

Assume that $\tilde{f},\,\tilde{g}$ and $\tilde{h}$ are the lifts of $f,\,g$ and $h$, respectively, satisfying
$$
\tilde{h} \circ \tilde{f}=\tilde{g} \circ \tilde{h}.
$$
Then we have
$$
\pi \circ \tilde{h}=h \circ \pi,
\qquad \pi \circ \tilde{f}=f \circ \pi,
\qquad \pi \circ \tilde{g}=g \circ \pi,
$$
where $\pi:\,E \rightarrow B$ is the covering map.
Hence for any $x \in E$, we get
\begin{gather*}
\pi \circ \tilde{h} \circ \tilde{f}(x)=h \circ \pi \circ \tilde{f}(x)=h \circ f \circ \pi(x),
\\
\pi \circ \tilde{g} \circ \tilde{h}(x)=g \circ \pi \circ \tilde{h}(x)=g \circ h \circ \pi(x),
\end{gather*}
and then
$$
h \circ f \circ \pi(x)=g \circ h \circ \pi(x).
$$
Since $\pi:\,E \rightarrow B$ is a surjection, we have
$$
h \circ f(x)=g \circ h(x),\qquad \forall\,x \in B,
$$
that means $f$ and $g$ are topologically conjugate.

Next, we prove the necessity.

Assume that $\pi:\,E \rightarrow B$ is a $|\Lambda|$-fold covering map, where $\Lambda$ is an index set, and $|\Lambda|$ is the cardinal number of $\Lambda$.
Fix one lift of $f$ denoted by $\tilde{f}$ and one lift of $h$ denoted by $\tilde{h}$.
Then the known condition $h \circ f=g \circ h$ indicates that $\tilde{h} \circ \tilde{f}$ is not only a lift of $h \circ f$, but also a lift of $g \circ h$.
One can see that there exist $|\Lambda|$ lifts of $g$ denoted by $\{\tilde{g}_\alpha\}_{\alpha \in \Lambda}$,
so $\{\tilde{g}_\alpha \circ \tilde{h}\}_{\alpha \in \Lambda}$ are the $|\Lambda|$ lifts of $g \circ h$. Thus, there exists some $\alpha_0 \in \Lambda$ such that
$$
\tilde{h} \circ \tilde{f}=\tilde{g}_{\alpha_0} \circ \tilde{h},
$$
that means $\tilde{f}$ and $\tilde{g}_{\alpha_0}$ are topologically conjugate.
\end{proof}

\begin{lemma}\label{lem:6}\cite{KR-1989}
Let $W$ be a topological 4-dimensional $h$-cobordism of lens spaces
$L_0$ and $L_1$ which preserves an orientation and generator of fundamental group.
Then $W \cong L_0 \times [0, 1]$.
\end{lemma}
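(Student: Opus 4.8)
The plan is to read the statement as the topological $s$-cobordism theorem in dimension four, specialized to lens spaces, and to prove it by combining an algebraic computation of the Whitehead torsion of $W$ with four-dimensional surgery theory. Since the inclusion $L_0\hookrightarrow W$ is a homotopy equivalence, $W$ carries a well-defined Whitehead torsion $\tau:=\tau(W,L_0)\in\mathrm{Wh}(\pi_1 W)=\mathrm{Wh}(\mathbb Z/p)$, and once we show $\tau=0$ the conclusion $W\cong L_0\times[0,1]$ follows from the four-dimensional topological $s$-cobordism theorem of Freedman and Quinn, which applies here because $\pi_1 W=\mathbb Z/p$ is finite, hence a ``good'' group. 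So the whole problem is to prove $\tau=0$.

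To do this, note that the two inclusions $i:L_0\hookrightarrow W$ and $j:L_1\hookrightarrow W$ are homotopy equivalences and together give a homotopy equivalence $\varphi\simeq j^{-1}\circ i:L_0\to L_1$; by hypothesis $\varphi$ preserves orientation and sends the distinguished generator of $\pi_1(L_0)$ to that of $\pi_1(L_1)$. By the classical classification of homotopy equivalences of three-dimensional lens spaces (Reidemeister--Franz--de Rham, Whitehead, Olum), such a $\varphi$ is necessarily a simple homotopy equivalence (indeed it forces $L_0$ and $L_1$ to coincide and is homotopic to the identity), so $\tau(\varphi)=0$. Now Milnor's duality theorem for $h$-cobordisms gives $\tau(W,L_1)=(-1)^{\dim W-1}\,\overline{\tau(W,L_0)}=-\overline{\tau}$, where the bar is the standard involution of $\mathrm{Wh}(\mathbb Z/p)$ induced by $g\mapsto g^{-1}$ (there is no orientation twist, since $W$ is orientable), and the composition formula for Whitehead torsion gives $0=\tau(\varphi)=\tau(W,L_0)-\tau(W,L_1)=\tau+\overline{\tau}$, the change-of-group isomorphisms being the identity because the $\pi_1$-generators are matched. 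Finally, the involution on $\mathrm{Wh}(\mathbb Z/p)$ is rationally trivial: it is induced by complex conjugation on the units of $\mathbb Z[\zeta_d]$ for $d\mid p$, and this acts as the identity modulo torsion because a cyclotomic field has the same unit rank as its maximal totally real subfield. Since $\mathrm{Wh}(\mathbb Z/p)$ is moreover torsion-free, $\tau+\overline\tau=0$ together with $\overline\tau=\tau$ rationally forces $2\tau=0$ rationally, hence $\tau$ is torsion, hence $\tau=0$.

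The main obstacle is not the algebra above but the geometric passage from ``$\tau(W,L_0)=0$'' to a genuine product structure on the four-manifold $W$: this requires the full strength of Freedman--Quinn topological surgery and, in particular, the version of the $s$-cobordism theorem valid for four-dimensional $h$-cobordisms between three-manifolds with good fundamental group. This is exactly where the dimension and the finiteness of $\mathbb Z/p$ enter, and it is what makes the statement deep. A secondary point to be careful about is the precise meaning of ``preserves orientation and generator'', so that the change-of-group isomorphisms on the Whitehead groups really are the identity and Milnor's duality is applied with the correct sign; and one should make sure the lens-space classification is invoked in exactly the form needed, namely that an orientation- and generator-preserving homotopy equivalence of lens spaces is simple.
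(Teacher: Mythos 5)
The paper gives no proof of this lemma: it is quoted verbatim from Kwasik--Schultz [KR-1989], so the only thing to check is whether your argument actually establishes the cited result. The algebraic half of your proposal is correct and is indeed the standard first step: the orientation- and generator-preserving hypothesis forces the induced homotopy equivalence $L_0\to L_1$ to be (between equal lens spaces and) homotopic to the identity, hence simple; Milnor's duality formula then gives $\tau(W,L_0)+\overline{\tau(W,L_0)}=0$, and since the involution acts trivially on the torsion-free group $\mathrm{Wh}(\mathbb{Z}/p)$ this yields $\tau(W,L_0)=0$. In other words, $W$ is an $s$-cobordism.

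The gap is the final, decisive step. There is no ``four-dimensional topological $s$-cobordism theorem of Freedman and Quinn'' for cobordisms between $3$-manifolds. The Freedman--Quinn $s$-cobordism theorem for good fundamental groups is the \emph{five}-dimensional one, for $s$-cobordisms between $4$-manifolds: there the disk embedding theorem provides a Whitney trick in the $4$-dimensional middle levels. For a $4$-dimensional cobordism the handle-cancellation scheme would have to be performed in $3$-dimensional levels, where no such tool exists, and whether every topological $4$-dimensional $s$-cobordism between $3$-manifolds is a product is a well-known open problem (compare the Cappell--Shaneson examples of nontrivial smooth $4$-dimensional $s$-cobordisms). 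This is precisely why the cited Kwasik--Schultz paper is a nontrivial contribution: for lens spaces and certain prism manifolds they obtain the product structure by special arguments --- roughly, the universal cover of $W$ is an $h$-cobordism from $S^3$ to $S^3$, hence a product by Freedman's work, and one must then show the free $\mathbb{Z}/p$-action on $S^3\times[0,1]$ is standard --- rather than by appealing to a general $s$-cobordism theorem. So the step you yourself flag as carrying all the depth is exactly the step your argument does not supply; as written, the proposal invokes a theorem that does not exist.
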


\section{Topologically conjugate classification of the left actions on ${\rm SU}(2)$}
\label{sec:3}
In this section, we define the rotation numbers of the left actions in the set
$$
\mathfrak{M}_{T_{{\rm SU}(2)}}=\{L_g:\,{\rm SU}(2)\rightarrow {\rm SU}(2);\,\, g \in T_{{\rm SU}(2)}\},
$$
and utilize the rotation numbers defined to give the topologically conjugate classification of the left actions in $\mathfrak{M}_{T_{{\rm SU}(2)}}$,
and then give the topologically conjugate
classification of all left actions on ${\rm SU}(2)$.
Furthermore, we also study the relationship among their topological conjugacy, algebraic conjugacy and smooth conjugacy.

Now, fix one maximal torus of ${\rm SU}(2)$ as
$$
T_{{\rm SU}(2)}=\left\{\left(\begin{array}{cc}
z&0\\
0&\bar{z}
\end{array}\right); z={\textrm{e}}^{2\pi{{\rm i}}\theta} \in \mathbb{C},\,\theta \in [0, 1)\right\}.
$$
Obviously,
$$
T_{{\rm SU}(2)} \cong S^1.
$$
Define an isomorphism $\Phi:\,T_{{\rm SU}(2)} \rightarrow S^1$ by
$$
\Phi:\,u\mapsto z,
\qquad \forall\, u=\left(\begin{array}{cc}
z&0\\
0&\bar{z}
\end{array}\right) \in T_{{\rm SU}(2)}.
$$
Then for any $L_g \in \mathfrak{M}_{T_{{\rm SU}(2)}}$,
set
$$
f=\Phi \circ L_g|_{T_{{\rm SU}(2)}} \circ \Phi^{-1}.
$$
One can see that $f$ is a rotation of $S^1$ satisfying
$$
f(z)={\textrm{e}}^{2\pi{{\rm i}}\theta}z,\qquad\forall\, z \in S^1,
$$
where $\theta \in [0, 1)$.
This fact indicates that $L_g|_{T_{{\rm SU}(2)}}$ is topologically conjugate to some rotation $f$ of $S^1$.
Thus, according to Definition \ref{def:3}, we define the rotation number of the left action $L_g$ under the representation $(T_{{\rm SU}(2)}, \Phi)$ by
$$
\rho(L_g)\triangleq\rho(f)=\theta,
\qquad \theta \in [0,1).
$$

\begin{theorem}\label{the:1}
For the left actions $L_g,\,L_{g'} \in \mathfrak{M}_{T_{{\rm SU}(2)}}$,
$L_g$ and $L_{g'}$ are topologically conjugate if and only if
$$
\rho(L_{g'})=\pm\,\rho(L_g)\quad(\rm{mod}\,\,\,\mathbb{Z}).
$$
\end{theorem}

\begin{proof}
First, we prove the sufficiency.

If $\rho(L_{g'})=\rho(L_g)$, we have $L_g=L_{g'}$.
Hence the identity map of ${\rm SU}(2)$ is a topological conjugacy from $L_g$ to $L_{g'}$.
If $\rho(L_{g'})=1-\rho(L_g)$, set
$$
g=\left(\begin{array}{cc}
z&0\\
0&\bar{z}
\end{array}\right),\qquad g'=\left(\begin{array}{cc}
\bar{z}&0\\
0&z
\end{array}\right).
$$
Take a homeomorphism $h:\,{\rm SU}(2) \rightarrow {\rm SU}(2)$ defined by
$$
h:\,u \mapsto \left(\begin{array}{cc}
0&1\\
-1&0
\end{array}\right)u\left(\begin{array}{cc}
0&-1\\
1&0
\end{array}\right),
\qquad \forall\, u=\left(\begin{array}{cc}
a&b\\
c&d
\end{array}\right) \in {\rm SU}(2).
$$
In fact, $h$ is an inner automorphism of ${\rm SU}(2)$.
Then through a simple calculation, we obtain
$$
h \circ L_g(u)=\left(\begin{array}{cc}
d\bar{z}&-c\bar{z}\\
-bz&az
\end{array}\right)=L_{g'} \circ h(u),
\quad \forall\, u=\left(\begin{array}{cc}
a&b\\
c&d
\end{array}\right) \in {\rm SU}(2),
$$
so $L_g$ and $L_{g'}$ are topologically conjugate.

Next, we prove the necessity.

Suppose that $L_g$ and $L_{g'}$ are topologically conjugate.
Then there exists a self-homeomorphism $h$ of ${\rm SU}(2)$ such that
$$
h \circ L_g=L_{g'} \circ h,\qquad h(e)=e.
$$
and then
$$
h \circ L_g^n(e)=L_{g'}^n \circ h(e),\qquad \forall\,n\in\mathbb{Z}_+,
$$
so we have
$$
h(L_g^n(e))=L_{g'}^n(e),\qquad \forall\,n\in\mathbb{Z}_+.
$$
This fact implies that
$$
h(\overline{Orb_{L_g}(e)})=\overline{Orb_{L_{g'}}(e)}.
$$
Thus, we divide following proof into three cases.

(1) Assume that $\rho(L_g)=q/p$ is a rational number with ${\rm gcd}\,(p, q)=1$,
and $\rho(L_{g'})$ is an irrational number.
It is easy to see that $\overline{Orb_{L_g}(e)}$ is a discrete set containing $p$ elements, but
$$
\overline{Orb_{L_{g'}}(e)}=T_{{\rm SU}(2)} \cong S^1.
$$
This fact is in contradiction to $h$ being a homeomorphism.
Therefore, $L_g$ and $L_{g'}$ can not be topologically conjugate in this case.

(2) Assume that $\rho(L_g)$ and $\rho(L_{g'})$ are both irrational numbers.
Then one can see that
$$
\overline{Orb_{L_g}(e)}=\overline{Orb_{L_{g'}}(e)}=T_{{\rm SU}(2)} \cong S^1.
$$
Naturally, we can restrict the dynamical system to the maximal torus $T_{{\rm SU}(2)}$, and then obtain
$$
h|_{T_{{\rm SU}(2)}} \circ L_g|_{T_{{\rm SU}(2)}}=L_{g'}|_{T_{{\rm SU}(2)}} \circ h|_{T_{{\rm SU}(2)}},
$$
where $h|_{T_{{\rm SU}(2)}}$ is a self-homeomorphism of $T_{{\rm SU}(2)}$,
so $L_g|_{T_{{\rm SU}(2)}}$ and $L_{g'}|_{T_{{\rm SU}(2)}}$ are topologically conjugate.
And the previous discussion indicates that $L_g|_{T_{{\rm SU}(2)}}$ and $L_{g'}|_{T_{{\rm SU}(2)}}$ are topologically conjugate to some rotations $f$ and $f'$ of $S^1$, respectively, and
$$
\rho(L_g)=\rho(L_g|_{T_{{\rm SU}(2)}})=\rho(f),\qquad \rho(L_{g'})=\rho(L_{g'}|_{T_{{\rm SU}(2)}})=\rho(f').
$$
Hence $f$ and $f'$ are topologically conjugate.
Therefore, according to Proposition \ref{prop:1}, we have
$$
\rho(L_{g'})=\pm\,\rho(L_g)\quad(\rm{mod}\,\,\,\mathbb{Z}).
$$

(3) Assume that
$$
\rho(L_g)=\dfrac{q}{p},\qquad \rho(L_{g'})=\dfrac{q'}{p'}
$$
are both rational numbers with ${\rm gcd}\,(p, q)={\rm gcd}\,(p', q')=1$.
Obviously, $L_g$ is a $p$-periodic left action, and $L_{g'}$ is a $p'$-periodic left action, so $p=p'$.
Consider the topological structure of ${\rm SU}(2)$,
and we can identify ${\rm SU}(2)$ with $S^3$.
Then it is easy to see that $L_g$ and $L_{g'}$ are both orbit equivalent to a periodic homeomorphism $f:\,S^3 \rightarrow S^3$ defined by
$$
f:\,(z_1, z_2)\mapsto({\textrm{e}}^{2\pi{\rm i}/p}z_1, {\textrm{e}}^{-2\pi{\rm i}/p}z_2),
\qquad \forall\, (z_1, z_2) \in S^3.
$$
Notice that
$$
\overline{Orb_{L_g}(e)}=\overline{Orb_{L_{g'}}(e)} \cong \mathbb{Z}_p,
$$
and then it follows from Definition \ref{def:2.5} that
$$
{{\rm SU}(2)}/\overline{Orb_{L_g}(e)}={\rm SU}(2)/\overline{Orb_{L_{g'}}(e)} \cong {\rm SU}(2)/\mathbb{Z}_p \cong S^3/f \cong L(p, -1).
$$
As well-known, ${\rm SU}(2)$ is the universal covering space of $L(p, -1)$, and
$$
\pi_1(L(p, -1)) \cong \mathbb{Z}_p.
$$
According to the above analysis, the definition of the fundamental group of $L(p, -1)$ and Lemma \ref{lem:1}, we get a commutative diagram as follows.
$$
\xymatrixcolsep{3pc}
\xymatrix{
{\,\,I\,\,} \ar[r]^{\tilde{\alpha}} \ar[rd]_{\alpha}
& {\,\,{\rm SU}(2)\,\,} \ar[r]^{h} \ar[d]^{\pi} &
{\,\,{\rm SU}(2)\,\,} \ar[d]^{\pi}\\
&{\,\,L(p, -1)\,\,} \ar[r]^{\bar{h}} & {\,\,L(p, -1)\,\,}.}
$$
In this diagram, $I=[0, 1]$, $h$ is a topological conjugacy from $L_g$ to $L_{g'}$, $\pi$ is the universal covering map from ${\rm SU}(2)$ to $L(p, -1)$, $\alpha$ is a loop in $L(p, -1)$ with the base point $\pi(e)$, where $e=\left(\begin{array}{cc}
1&0\\
0&1
\end{array}\right)$ is the identity element of ${\rm SU}(2)$,
$\tilde{\alpha}$ is the unique lift of $\alpha$ satisfying $\tilde{\alpha}(0)=e$, and $\bar{h}$ is a self-homeomorphism of $L(p, -1)$ induced by $h$.
Set
$$
\pi_1(L(p,-1))=\{[\alpha_k];\,\,\,k=1,\,2,\,\cdots,\,p\}\,\cong\,\mathbb{Z}_p,
$$
where $\alpha_k$ is a loop in $L(p, -1)$ with the base point $\pi(e)$,
and $[\alpha_k]$ denotes the homotopy class of $\alpha_k$.
Let $\tilde{\alpha}_k$ be the unique lift of $\alpha_k$ satisfying $\tilde{\alpha}_k(0)=e$.
Then $\tilde{\alpha}_k$ is a continuous curve in ${\rm SU}(2)$ connecting the elements $e=\left(\begin{array}{cc}
1&0\\
0&1
\end{array}\right)$ and $\left(\begin{array}{cc}
{\textrm{e}}^{2\pi{\rm i} k/p}&0\\
0&{\textrm{e}}^{-2\pi{\rm i} k/p}
\end{array}\right)$.
By the known conditions
$$
h \circ L_g=L_{g'} \circ h,\quad h(e)=e,
\quad \rho(L_g)=\dfrac{q}{p},\quad \rho(L_{g'})=\dfrac{q'}{p},
$$
we have
$$
h(\tilde{\alpha}_q)=\tilde{\alpha}_q',
$$
and then the above commutative diagram implies that
$$
\bar{h}(\alpha_q)=\alpha_q'.
$$
Let $\bar{h}_*:\,\pi_1(L(p, -1)) \rightarrow \pi_1(L(p, -1))$ be the automorphism of $\pi_1(L(p, -1))$ induced by $\bar{h}$.
Then we have
$$
\bar{h}_*([\alpha_q])=[\alpha_q'].
$$
Notice that $\bar{h}$ is a degree 1 orientation-preserving self-homeomorphism of $L(p, -1)$,
and then if follows from Lemma \ref{lem:2} that
$$
\bar{h}_*([\alpha_q])=\pm\,[\alpha_q]=[\alpha_q'].
$$
Thus, we obtain
$$
q'=q \quad\hbox{or}\quad q'=p-q,
$$
so
$$
\rho(L_{g'})=\pm\,\rho(L_g)\quad(\rm{mod}\,\,\,\mathbb{Z}).
$$
\end{proof}

Finally, we investigate the relationship among the topological conjugacy, algebraic conjugacy and smooth conjugacy of the left actions on ${\rm SU}(2)$.

\begin{corollary}\label{prop:4}
Suppose that $L_g,\,L_{g'}$ are two left actions on ${\rm SU}(2)$.
Then the following conditions are equivalent.

(a) $L_g$ and $L_{g'}$ are topologically conjugate;

(b) $L_g$ and $L_{g'}$ are algebraically conjugate;

(c) $L_g$ and $L_{g'}$ are smooth conjugate.
\end{corollary}

\begin{proof}
(b) $\Longrightarrow$ (a) and (c) $\Longrightarrow$ (a) are obviously true.

Assume that $L_g$ and $L_{g'}$ are topologically conjugate.
Proposition \ref{prop:3} indicates that there exist some $s,\,s' \in {\rm SU}(2)$ and $t,\,t' \in T_{{\rm SU}(2)}$ such that
$$
L_s \circ L_g=L_t \circ L_s,\qquad L_{s'} \circ L_{g'}=L_{t'} \circ L_{s'}.
$$
Hence $L_g$ and $L_t$ are topologically conjugate,  $L_{g'}$ and $L_{t'}$ are topologically conjugate,
so $L_t$ and $L_{t'}$ are topologically conjugate.
Then according to Theorem \ref{the:1}, we have
$$
\rho(L_{t'})=\pm\,\rho(L_t)\quad({\rm{mod}}\,\,\,\mathbb{Z}).
$$
Thus, combining with the proof of the sufficiency of Theorem \ref{the:1},
if $\rho(L_{t'})=\rho(L_t)$,
we can choose the topological conjugacy $h$ equal to ${\rm{id}}_{{\rm SU}(2)}$,
and then $h$ is obviously a smooth inner automorphism of ${\rm SU}(2)$,
i.e., $L_t$ and $L_{t'}$ are algebraically conjugate and smooth conjugate.
If $\rho(L_{t'})=1-\rho(L_t)$,
we can choose the topological conjugacy $h$ defined by
$$
h:\,u\mapsto\left(\begin{array}{cc}
0&1\\
-1&0
\end{array}\right)u\left(\begin{array}{cc}
0&-1\\
1&0
\end{array}\right),\qquad \forall\, u \in {\rm SU}(2),
$$
and then one can see that $h$ is a smooth inner automorphism of ${\rm SU}(2)$,
i.e., $L_t$ and $L_{t'}$ are algebraically conjugate and smooth conjugate.
Take smooth inner automorphisms $h,\,h'$ satisfying
$$
h(u)=sus^{-1},\qquad h'(u)=s'us'^{-1},
\qquad \forall\, u \in {\rm SU}(2).
$$
It is easy to verify that
$$
h \circ L_g=L_t \circ h,\qquad h' \circ L_{g'}=L_{t'} \circ h',
$$
that means $L_g$ and $L_t$ are algebraically conjugate and smooth conjugate,  $L_{g'}$ and $L_{t'}$ are algebraically conjugate and smooth conjugate,
so $L_g$ and $L_{g'}$ are algebraically conjugate and smooth conjugate.
Therefore, (a) $\Longrightarrow$ (b) and (a) $\Longrightarrow$ (c) have been proved.
\end{proof}

\section{Topologically conjugate classification of the left actions on ${\rm U}(2)$}
\label{sec:4}

In this section, we define the rotation vectors of the left actions in the set
$$
\mathfrak{M}_{T_{{\rm U}(2)}}=\{L_g:\,{\rm U}(2)\rightarrow {\rm U}(2);\,\, g \in T_{{\rm U}(2)}\},
$$
and utilize the rotation vectors defined to give the topologically conjugate
classification of the left actions in $\mathfrak{M}_{T_{{\rm U}(2)}}$,
and then give the topologically conjugate
classification of all left actions on ${\rm U}(2)$.
Furthermore, we study the relationship among their topological conjugacy, algebraic conjugacy and smooth conjugacy.

Now, fix one maximal torus of ${\rm U}(2)$ as
$$
T_{{\rm U}(2)}=\left\{\left(\begin{array}{cc}
\lambda z&0\\
0&\bar{z}
\end{array}\right); z={\textrm{e}}^{2\pi{{\rm i}}\theta},
\,\lambda={\textrm{e}}^{2\pi{{\rm i}}\varphi} \in \mathbb{C},\,\,\theta,\,\varphi \in [0, 1)\right\}.
$$
Obviously,
$$
{\rm U}(2) \cong {\rm SU}(2) \times S^1,\qquad T_{{\rm U}(2)} \cong T^2.
$$
Define an isomorphism $\Phi:\,T_{{\rm U}(2)} \rightarrow T^2$ by
$$
\Phi:\,u\mapsto\left(\begin{array}{cc}
z&0\\
0&\lambda
\end{array}\right),
\qquad \forall\, u=\left(\begin{array}{cc}
\lambda z&0\\
0&\bar{z}
\end{array}\right) \in T_{{\rm U}(2)}.
$$
Then for any $L_g \in \mathfrak{M}_{T_{{\rm U}(2)}}$,
set
$$
f=\Phi \circ L_g|_{T_{{\rm U}(2)}} \circ \Phi^{-1}.
$$
One can see that $f$ is a rotation of $T^2$ satisfying
$$
f(t)=\left(\begin{array}{cc}
{\textrm{e}}^{2\pi{\rm i}\theta}&0\\
0&{\textrm{e}}^{2\pi{\rm i}\varphi}
\end{array}\right)t,
\qquad \forall\, t \in T^2,
$$
where $\theta,\,\varphi \in [0, 1)$.
This fact indicates that $L_g|_{T_{{\rm U}(2)}}$ is topologically conjugate to some rotation $f$ of $T^2$.
Thus, according to Definition \ref{def:3}, we define the rotation vector of the left action $L_g$ under the representation $(T_{{\rm U}(2)}, \Phi)$ by
$$
\rho(L_g)\triangleq\rho(f)=\left(\begin{array}{c}
\theta\\
\varphi
\end{array}\right),
\qquad \theta,\,\varphi \in [0,1).
$$

\begin{theorem}\label{the:2}
For the left actions $L_g,\,L_{g'} \in \mathfrak{M}_{T_{{\rm U}(2)}}$ with
$$
\rho(L_g)=\left(\arraycolsep=1pt\begin{array}{c}
\theta\\
\varphi
\end{array}\right),\qquad \rho(L_{g'})=\left(\arraycolsep=1pt\begin{array}{c}
\theta'\\
\varphi'
\end{array}\right),
$$
$L_g$ and $L_{g'}$ are topologically conjugate if and only if
$$
\left\{
\begin{array}{l}
\theta'=\pm\,\theta+n\varphi+n',\qquad n,\,n'\in\mathbb{Z},
\\
\varphi'=\pm\,\varphi\quad(\rm{mod}\,\,\,\mathbb{Z}).
\end{array}
\right.
$$
\end{theorem}

\begin{proof}
Firstly, we prove the sufficiency.

For the known conditions
$$
\rho(L_g)=\left(\arraycolsep=1pt\begin{array}{c}
\theta\\
\varphi
\end{array}\right),\qquad \rho(L_{g'})=\left(\arraycolsep=1pt\begin{array}{c}
\theta'\\
\varphi'
\end{array}\right),
$$
and
$$
\left\{
\begin{array}{l}
\theta'=\pm\,\theta+n\varphi+n',\qquad n,\,n'\in\mathbb{Z},
\\[0.25 cm]
\varphi'=\pm\,\varphi\quad(\rm{mod}\,\,\,\mathbb{Z}),
\end{array}
\right.
$$
we see that there are four different cases.

(1) Assume that
$$
\left\{
\begin{array}{l}
\theta'=\theta+n\varphi+n',\qquad n,\,n'\in\mathbb{Z},
\\[0.25 cm]
\varphi'=\varphi.
\end{array}
\right.
$$
Define $h_1,\,h'_1:\,{\rm U}(2) \rightarrow {\rm U}(2)$ by
$$
h_1:\,u\mapsto\left(\begin{array}{cc}
\lambda^n&0\\
0&\bar{\lambda}^n
\end{array}\right)u,
\qquad \forall\, u=\left(\begin{array}{cc}
\lambda z_1&-\lambda\bar{z_2}\\
z_2&\bar{z_1}
\end{array}\right) \in {\rm U}(2),
$$
$$
h'_1:\,u\mapsto\left(\begin{array}{cc}
\bar{\lambda}^n&0\\
0&\lambda^n
\end{array}\right)u,
\qquad \forall\, u=\left(\begin{array}{cc}
\lambda z_1&-\lambda\bar{z_2}\\
z_2&\bar{z_1}
\end{array}\right) \in {\rm U}(2),
$$
respectively, where $z_1,\,z_2,\,\lambda \in \mathbb{C},\,\,\,|z_1|^2+|z_2|^2=1,\,|\lambda|=1$.
It is easy to see that $h_1$ and $h'_1$ are both continuous,
and $h'_1 \circ h_1={\rm id}_{{\rm U}(2)}$,
where ${\rm id}_{{\rm U}(2)}$ is the identity map of ${\rm U}(2)$.
Hence $h_1$ is a self-homeomorphism of ${\rm U}(2)$.
Moreover, it follows from a simple calculation that for any $u=\left(\begin{array}{cc}
\lambda z_1&-\lambda\bar{z_2}\\
z_2&\bar{z_1}
\end{array}\right) \in {\rm U}(2)$,
$$
h_1 \circ L_g(u)=\left(\begin{array}{cc}
{\textrm{e}}^{2\pi{\rm i}\varphi}{\textrm{e}}^{2\pi{\rm i}(\theta+n\varphi)}&0\\
0&{\textrm{e}}^{-2\pi{\rm i}(\theta+n\varphi)}
\end{array}\right)\left(\begin{array}{cc}
\lambda^n&0\\
0&\bar{\lambda}^n
\end{array}\right)u=L_{g'} \circ h_1(u).
$$
Therefore, $L_g$ and $L_{g'}$ are topologically conjugate.

(2) Assume that
$$
\left\{
\begin{array}{l}
\theta'=\theta+n\varphi+n',\qquad n,\,n'\in\mathbb{Z},
\\[0.25 cm]
\varphi'=1-\varphi.
\end{array}
\right.
$$
Define $h_2,\,h'_2:\,{\rm U}(2) \rightarrow {\rm U}(2)$ by
$$
h_2:\,u\mapsto\left(\begin{array}{cc}
\lambda^{n-2}&0\\
0&\bar{\lambda}^n
\end{array}\right)u,
\qquad \forall\, u=\left(\begin{array}{cc}
\lambda z_1&-\lambda\bar{z_2}\\
z_2&\bar{z_1}
\end{array}\right) \in {\rm U}(2),
$$
$$
h'_2:\,u\mapsto\left(\begin{array}{cc}
\lambda^{n-2}&0\\
0&\bar{\lambda}^n
\end{array}\right)u,
\qquad \forall\, u=\left(\begin{array}{cc}
\lambda z_1&-\lambda\bar{z_2}\\
z_2&\bar{z_1}
\end{array}\right) \in {\rm U}(2),
$$
respectively, where $z_1,\,z_2,\,\lambda \in \mathbb{C},\,\,\,|z_1|^2+|z_2|^2=1,\,|\lambda|=1$.
Similar to Case (1), we can obtain that $h_2$ is a self-homeomorphism of ${\rm U}(2)$ satisfying for any $u=\left(\begin{array}{cc}
\lambda z_1&-\lambda\bar{z_2}\\
z_2&\bar{z_1}
\end{array}\right) \in {\rm U}(2)$,
$$
h_2 \circ L_g(u)=\left(\begin{array}{cc}
{\textrm{e}}^{2\pi{\rm i}(-\varphi)}{\textrm{e}}^{2\pi{\rm i}(\theta+n\varphi)}&0\\
0&{\textrm{e}}^{-2\pi{\rm i}(\theta+n\varphi)}
\end{array}\right)\left(\begin{array}{cc}
\lambda^{n-2}&0\\
0&\bar{\lambda}^n
\end{array}\right)u=L_{g'} \circ h_2(u).
$$
Therefore, $L_g$ and $L_{g'}$ are topologically conjugate.

(3) Assume that
$$
\left\{
\begin{array}{l}
\theta'=-\theta+n\varphi+n',\qquad n,\,n'\in\mathbb{Z},
\\[0.25 cm]
\varphi'=\varphi.
\end{array}
\right.
$$
Define $h_3,\,h'_3:\,{\rm U}(2) \rightarrow {\rm U}(2)$ by
$$
h_3:\,u\mapsto\left(\begin{array}{cc}
\lambda^n&0\\
0&\bar{\lambda}^n
\end{array}\right)\left(\begin{array}{cc}
\lambda\bar{z}_1&-\lambda z_2\\
\bar{z}_2&z_1
\end{array}\right),
\quad \forall\, u=\left(\begin{array}{cc}
\lambda z_1&-\lambda\bar{z_2}\\
z_2&\bar{z_1}
\end{array}\right) \in {\rm U}(2).
$$
$$
h'_3:\,u\mapsto\left(\begin{array}{cc}
\lambda^n&0\\
0&\bar{\lambda}^n
\end{array}\right)\left(\begin{array}{cc}
\lambda\bar{z}_1&-\lambda z_2\\
\bar{z}_2&z_1
\end{array}\right),
\quad \forall\, u=\left(\begin{array}{cc}
\lambda z_1&-\lambda\bar{z_2}\\
z_2&\bar{z_1}
\end{array}\right) \in {\rm U}(2),
$$
respectively, where $z_1,\,z_2,\,\lambda \in \mathbb{C},\,\,\,|z_1|^2+|z_2|^2=1,\,|\lambda|=1$.
Similar to Case (1), we can obtain that $h_3$ is a self-homeomorphism of ${\rm U}(2)$ satisfying for any $u=\left(\begin{array}{cc}
\lambda z_1&-\lambda\bar{z_2}\\
z_2&\bar{z_1}
\end{array}\right) \in {\rm U}(2)$,
$$
h_3 \circ L_g(u)=\left(\begin{array}{cc}
{\lambda^n}{\textrm{e}}^{2\pi{\rm i}\varphi}{\textrm{e}}^{2\pi{\rm i}(\theta+n\varphi)}&0\\
0&{\bar{\lambda}^n}{\textrm{e}}^{-2\pi{\rm i}(\theta+n\varphi)}
\end{array}\right)\left(\begin{array}{cc}
\lambda\bar{z}_1&-\lambda z_2\\
\bar{z}_2&z_1
\end{array}\right)=L_{g'} \circ h_3(u).
$$
Therefore, $L_g$ and $L_{g'}$ are topologically conjugate.

(4) Assume that $$
\left\{
\begin{array}{l}
\theta'=-\theta+n\varphi+n',\qquad n,\,n'\in\mathbb{Z},
\\[0.25 cm]
\varphi'=1-\varphi.
\end{array}
\right.
$$
Define $h_4,\,h'_4:\,{\rm U}(2) \rightarrow {\rm U}(2)$ by
$$
h_4:\,u\mapsto\left(\begin{array}{cc}
\lambda^{n-2}&0\\
0&\bar{\lambda}^n
\end{array}\right)\left(\begin{array}{cc}
\lambda\bar{z}_1&-\lambda z_2\\
\bar{z}_2&z_1
\end{array}\right),
\quad \forall\, u=\left(\begin{array}{cc}
\lambda z_1&-\lambda\bar{z_2}\\
z_2&\bar{z_1}
\end{array}\right) \in {\rm U}(2),
$$
$$
h'_4:\,u\mapsto\left(\begin{array}{cc}
\bar{\lambda}^{n-2}&0\\
0&\lambda^n
\end{array}\right)\left(\begin{array}{cc}
\lambda\bar{z}_1&-\lambda z_2\\
\bar{z}_2&z_1
\end{array}\right),
\quad \forall\, u=\left(\begin{array}{cc}
\lambda z_1&-\lambda\bar{z_2}\\
z_2&\bar{z_1}
\end{array}\right) \in {\rm U}(2),
$$
respectively, where $z_1,\,z_2,\,\lambda \in \mathbb{C},\,\,\,|z_1|^2+|z_2|^2=1,\,|\lambda|=1$.
Similar to Case (1), we can obtain that $h_4$ is a self-homeomorphism of ${\rm U}(2)$ satisfying for any $u=\left(\begin{array}{cc}
\lambda z_1&-\lambda\bar{z_2}\\
z_2&\bar{z_1}
\end{array}\right) \in {\rm U}(2)$,
$$
h_4 \circ L_g(u)=\left(\begin{array}{cc}
{\lambda^{n-2}}{\textrm{e}}^{2\pi{\rm i}\varphi}{\textrm{e}}^{2\pi{\rm i}(\theta+n\varphi)}&0\\
0&{\bar{\lambda}^n}{\textrm{e}}^{-2\pi{\rm i}(\theta+n\varphi)}
\end{array}\right)\left(\begin{array}{cc}
\lambda\bar{z}_1&-\lambda z_2\\
\bar{z}_2&z_1
\end{array}\right)=L_{g'} \circ h_4(u).
$$
Therefore, $L_g$ and $L_{g'}$ are topologically conjugate.

Next, we prove the necessity.

Notice that ${\rm U}(2) \cong {\rm SU}(2) \times S^1$,
and there exists a topological conjugacy from $L_g$ to $L_{g'}$ such that
$$
h \circ L_g=L_{g'} \circ h,\qquad h(e)=e.
$$
Let us investigate the following diagram.
$$
\xymatrixcolsep{3pc}
\xymatrix{
{\,\,T^2\,\,} \ar[d]_-{f_1} \ar[r]^-{i}
& {\,\,{{\rm SU}}(2) \times S^1\,\,} \ar[d]_-{L_g} \ar[r]^-{h}
& {\,\,{{\rm SU}}(2) \times S^1\,\,} \ar[d]_-{L_{g'}} \ar[r]^-{\pi}
& {\,\,S^1\,\,} \ar[d]_-{f_2}\\
{\,\,T^2\,\,} \ar[r]_-{i}
& {\,\,{{\rm SU}}(2) \times S^1\,\,} \ar[r]_-{h}
& {\,\,{{\rm SU}}(2) \times S^1\,\,} \ar[r]_-{\pi}
& {\,\,S^1\,\,}.}
$$
In this diagram, $i$ is a nature inclusion map preserving the identity elements, $\pi$ is a projection defined by
$$
\pi:\,(u, t) \mapsto t,\qquad \forall\,u \in {\rm SU}(2),\,\,\,t \in S^1,
$$
$f_1$ is a rotation of $T^2$, $f_2$ is rotation of $S^1$,
$L_g,\,L_{g'}$ are two left actions on ${\rm U}(2)$,
$h$ is a topological conjugacy from $L_g$ to $L_{g'}$,
and
$$
\rho(f_1)=\rho(L_g)=\left(\arraycolsep=1pt\begin{array}{c}
\theta\\
\varphi
\end{array}\right),
\qquad \rho(L_{g'})=\left(\arraycolsep=1pt\begin{array}{c}
\theta'\\
\varphi'
\end{array}\right),\qquad \rho(f_2)=\varphi'.
$$
It is easy to see that
$$
i \circ f_1=L_g \circ i,\qquad \pi \circ L_{g'}=f_2 \circ \pi.
$$
Then the diagram above is commutative.
Thus, we get
$$
f \circ f_1=f_2 \circ f,
$$
where $f=\pi \circ h \circ i:\,T^2 \rightarrow S^1$ is a continuous surjection.
As well-known,
$$
\pi_1({\rm SU}(2) \times S^1) \cong \pi_1(S^1) \cong \mathbb{Z},
\qquad \pi_1(T^2) \cong \mathbb{Z}\oplus\mathbb{Z}.
$$
Then for any $(m, n) \in \pi_1(T^2) \cong \mathbb{Z} \oplus \mathbb{Z}$,
$l,\,l' \in \pi_1({\rm SU}(2) \times S^1)$, we have
$$
i_*(m, n)=\pm\,n,
\qquad h_*(l)=\pm\,l,
\qquad \pi_*(l')=\pm\,l',
$$
where $i_*,\,h_*$ and $\pi_*$ are the group homomorphisms on fundamental groups induced by $i,\,h$ and $\pi$, respectively.
Hence the group homomorphism $f_*$ on fundamental groups induced by $f$ satisfies
$$
f_*(m, n)=(\pi \circ h \circ i)_*(m, n)
=\pi_* \circ h_* \circ i_*(m, n)=\pm\,n.
$$
So It follows from Lemma \ref{lem:3} that
$$
\varphi'=\pm\,\varphi\quad(\rm{mod}\,\,\,\mathbb{Z}).
$$
Consequently, it suffices to prove
$$
\theta'=\pm\,\theta+n\varphi+n',\qquad n,\,n'\in\mathbb{Z}.
$$

Firstly, let us review the definitions of the rational dependence and rational independence of real numbers.
Assume that $\alpha$ and $\beta$ are real numbers.
If there exist some $l,\,m,\,n \in \mathbb{Z}$ which are not all equal to $0$ such that
$$
l+m\alpha+n\beta=0,
$$
$\alpha$ and $\beta$ are said to be rationally dependent.
If not, $\alpha$ and $\beta$ are said to be rationally independent.

Let $L_g,\,L_{g'}$ be two left actions on ${\rm U}(2)$ with
$$
\rho(L_g)=\left(\arraycolsep=1pt\begin{array}{c}
\theta\\
\varphi
\end{array}\right),\qquad \rho(L_{g'})=\left(\arraycolsep=1pt\begin{array}{c}
\theta'\\
\varphi'
\end{array}\right),
$$
and $h$ be a topological conjugacy from $L_g$ to $L_{g'}$ preserving the identify element.
Then according to the above definitions,
one can see that there are four different cases in detail,
and then we divide the following proof into four parts corresponding to these cases, respectively.

{\bf Case 1.} Assume that $\theta$ and $\varphi$ are rationally dependent,
but $\theta'$ and $\varphi'$ are rationally independent.
According to the discussion in Section \ref{sec:3}, we have
$$
h(\overline{Orb_{L_g}(e)})=\overline{Orb_{L_{g'}}(e)}.
$$
Notice that $\overline{Orb_{L_g}(e)}$ consists of finite elements or some mutually disjoint simple closed curves, but $\overline{Orb_{L_{g'}}(e)} \cong T^2$.
(The reasons will be given in the following three cases.)
This fact is in contradiction to $h$ being a homeomorphism.
Thus, $L_g$ and $L_{g'}$ can not be topologically conjugate in this case.

{\bf Case 2.} Assume that $\theta$ and $\varphi$ are rationally independent,
$\theta'$ and $\varphi'$ are rationally independent, too.
Similarly, we have
$$
h(\overline{Orb_{L_g}(e)})=\overline{Orb_{L_{g'}}(e)}.
$$
Notice that
$$
\overline{Orb_{L_g}(e)}=\overline{Orb_{L_{g'}}(e)}=T_{{\rm U}(2)} \cong T^2.
$$
Naturally, we can restrict the dynamical system to the maximal torus $T_{{\rm U}(2)}$, and then obtain
$$
h|_{T_{{\rm U}(2)}} \circ L_g|_{T_{{\rm U}(2)}}=L_{g'}|_{T_{{\rm U}(2)}} \circ h|_{T_{{\rm U}(2)}},
$$
where $h|_{T_{{\rm U}(2)}}$ is a self-homeomorphism of $T_{{\rm U}(2)}$, so $L_g|_{T_{{\rm U}(2)}}$ and $L_{g'}|_{T_{{\rm U}(2)}}$ are topologically conjugate.
And the previous discussion indicates that $L_g|_{T_{{\rm U}(2)}}$ and $L_{g'}|_{T_{{\rm U}(2)}}$ are topologically conjugate to some rotations $f$ and $f'$ of $T^2$, respectively, and
$$
\rho(L_g)=\rho(L_g|_{T_{{\rm U}(2)}})=\rho(f)=\left(\arraycolsep=1pt\begin{array}{c}
\theta\\
\varphi
\end{array}\right),
\ \rho(L_{g'})=\rho(L_{g'}|_{T_{{\rm U}(2)}})=\rho(f')=\left(\arraycolsep=1pt\begin{array}{c}
\theta'\\
\varphi'
\end{array}\right).
$$
Hence $f$ and $f'$ are topologically conjugate.
Therefore, according to Proposition \ref{prop:2}, one can see that
$$
\left(\arraycolsep=1pt\begin{array}{c}
\theta'\\
\varphi'
\end{array}\right)=\bm{A}\left(\arraycolsep=1pt\begin{array}{c}
\theta\\
\varphi
\end{array}\right)\quad(\rm{mod}\,\,\,\mathbb{Z}),
$$
where $\bm{A} \in \rm{GL}_2(\mathbb{Z})$, and then together with the result
$$
\varphi'=\pm\,\varphi\quad(\rm{mod}\,\,\,\mathbb{Z}),
$$
we obtain $\bm{A}=\left(\begin{array}{cc}
\pm\,1&n\\
0&\pm\,1
\end{array}\right)$, where $n \in \mathbb{Z}$,
so
$$
\theta'=\pm\,\theta+n\varphi+n',\qquad n,\,n'\in\mathbb{Z}.
$$

{\bf Case 3.} Assume that $\theta$ and $\varphi$ are rationally dependent, $\theta'$ and $\varphi'$ are also rationally dependent,
and $\varphi'=\pm\,\varphi\,\,\,(\rm{mod}\,\,\,\mathbb{Z})$ is an irrational number.
There are three different cases.

(i) Suppose that $\theta=q/p,\,\theta'=q'/p'$ are rational numbers with ${\rm gcd}\,(p, q)={\rm gcd}\,(p', q')=1$,
and $h$ a topological conjugacy from $L_g$ to $L_{g'}$ satisfying
$$
h(\overline{Orb_{L_g}(e)})=\overline{Orb_{L_{g'}}(e)}.
$$
Firstly, we discuss the closures of the orbits of $e$ under $L_g$ and $L_{g'}$.
For the left action $L_g$, we see that $\rho(L_g^p)=\left(\arraycolsep=1pt\begin{array}{c}
0\\
p\varphi+n
\end{array}\right),\,n \in \mathbb{Z}$.
And for $j=0, 1, \cdots, p-1$, define $F_j:\,S^1 \rightarrow \rm{U}(2)$ by
$$
F_j:\,z\mapsto\left(\begin{array}{cc}
z&0\\
0&1
\end{array}\right)\left(\begin{array}{cc}
\textrm{e}^{2\pi{\rm i} j\varphi}\textrm{e}^{2\pi{\rm i} j\theta}&0\\
0&\textrm{e}^{-2\pi{\rm i} j\theta}
\end{array}\right),\qquad \forall\, z \in S^1.
$$
One can see that every $F_j$ is an embedding map,
and consequently, every $H_j:\,S^1 \rightarrow F_j(S^1)$ is a homeomorphism.
Take a rotation $f$ of $S^1$ with $\rho(f)=p\varphi+n$.
Then it is not difficult to verify that
$$
H_j \circ f=L_g^p|_{F_j(S^1)} \circ H_j,\qquad H_j(e')=e_j,
$$
and hence
$$
H_j(\overline{Orb_f(e')})=\overline{Orb_{L_g^p}(e_j)}=F_j(S^1),
$$
where
$$
e_j=\left(\begin{array}{cc}
\textrm{e}^{2\pi{\rm i} j\varphi}\textrm{e}^{2\pi{\rm i} j\theta}&0\\
0&\textrm{e}^{-2\pi{\rm i} j\theta}
\end{array}\right),\,\,\,j=0, 1, \cdots, p-1,
$$
and $e',\,e_0=e$ are the identity elements of $S^1$ and ${\rm U}(2)$, respectively.
Thus, $\overline{Orb_{L_g^p}(e_j)} \cong S^1$.
This fact indicates that every $\overline{Orb_{L_g^p}(e_j)}$ is a simple closed curve in ${\rm{U}}(2)$.
Furthermore, we have
$$
\overline{Orb_{L_g}(e)}=\overline{Orb_{L_g^p}(e_0)}\cup\overline{Orb_{L_g^p}(e_1)}\cup
\cdots\cup\overline{Orb_{L_g^p}(e_{p-1})}.
$$
And according to the definitions of $F_j$, where $j=0, 1, \cdots, p-1$, one can see that the simple closed curves
$$
\overline{Orb_{L_g^p}(e_0)},\quad\overline{Orb_{L_g^p}(e_1)},\quad\cdots,\quad
\overline{Orb_{L_g^p}(e_{p-1})}
$$
are mutually disjoint,
so $\overline{Orb_{L_g}(e)}$ contains $p$ mutually disjoint simple closed curves in ${\rm{U}}(2)$.
Similarly, $\overline{Orb_{L_{g'}}(e)}$ contains $p'$ mutually disjoint simple closed curves in ${\rm{U}}(2)$.
Thus, $p=p'$.

Set $\theta=\dfrac{q}{p},\,\theta'=\dfrac{q'}{p}$. It is easy to see that
$$
\overline{Orb_{L_g^p}(e)}=\overline{Orb_{L_{g'}^p}(e)}=\left\{\left(\begin{array}{cc}
z\omega&0\\
0&\bar{z}
\end{array}\right);\,z^1\omega^0=1,\,z,\,\omega \in \mathbb{C},\,|z|=|\omega|=1\right\}.
$$
Then it follows from Lemma \ref{lem:4} that $\overline{Orb_{L_g^p}(e)}=\overline{Orb_{L_{g'}^p}(e)}\,\cong\,S^1$ is a subgroup of ${\rm U}(2)$, and
$$
{\rm U}(2)/\overline{Orb_{L_g^p}(e)}={\rm U}(2)/\overline{Orb_{L_{g'}^p}(e)} \cong L(1, -1) \cong S^3 \cong {\rm SU}(2).
$$
Thus, according to Lemma \ref{lem:1}, we know that there exists some homeomorphism $h_\pi:\,{\rm SU}(2) \rightarrow {\rm SU}(2)$ induced by $h$ such that
$$
\pi \circ h=h_\pi \circ \pi,
$$
where $\pi:\,{{\rm U}}(2) \rightarrow {{\rm SU}}(2)$ is the quotient map.
And then the proof of Lemma \ref{lem:4} implies that the quotient map $\pi:\,{{\rm U}}(2) \rightarrow {\rm SU}(2)$ satisfies
$$
\pi(z, \textrm{e}^{2\pi{\rm i}\alpha}, \textrm{e}^{2\pi{\rm i}\beta})=(z, \textrm{e}^{2\pi{\rm i}\alpha}).
$$
Take $L,\,L' \in \mathfrak{M}_{T_{{\rm SU}(2)}}$ with
$$
\rho(L)=\dfrac{q}{p},\qquad\rho(L')=\dfrac{q'}{p}.
$$
Then it is easy to prove that
$$
\pi \circ L_g=L \circ \pi,\qquad \pi \circ L_{g'}=L' \circ \pi.
$$
Therefore, for any $u \in {\rm U}(2)$, we have
\begin{gather*}
\pi \circ h \circ L_g(u)=h_\pi \circ \pi \circ L_g(u)=h_\pi \circ L \circ \pi(u),
\\
\pi \circ L_{g'} \circ h(u)=L' \circ \pi \circ h(u)=L' \circ h_\pi \circ \pi(u).
\end{gather*}
And then the known condition $h \circ L_g=L_{g'} \circ h$ indicates that
$$
h_\pi \circ L \circ \pi(u)=\pi \circ h \circ L_g(u)=\pi \circ L_{g'} \circ h(u)=L' \circ h_\pi \circ \pi(u),
\quad \forall\, u \in {\rm U}(2).
$$
Notice that $\pi:\,{{\rm U}}(2) \rightarrow {{\rm SU}}(2)$ is a surjection,
so we obtain
$$
h_\pi \circ L(u)=L' \circ h_\pi(u)\qquad \forall\, u \in {\rm U}(2),
$$
that means $L$ and $L'$ are topologically conjugate.
Then by Theorem \ref{the:1}, one can see that
$$
\theta'=\theta \quad\hbox{or}\quad \theta'=1-\theta.
$$
Consequently, in this case, $\theta,\,\theta'$ and $\varphi$ satisfy
$$
\theta'=\pm\,\theta+n\varphi+n',\qquad n,\,n' \in \mathbb{Z}.
$$.

(ii) Suppose that $\theta$ and $\theta'$ are both irrational numbers.
Then according to the definition of rational dependence, we have
\begin{align}\label{eq:4}
\left\{
\begin{array}{l}
k+pd\theta+qd\varphi=0,
\\
k'+p'd'\theta'+q'd'\varphi'=0,
\end{array}
\right.
\end{align}
where
$$
k,\,k',\,d,\,d',\,p,\,p',\,q, \,q' \in \mathbb{Z},\quad p,\,p',\,q,\,q',\,d,\,d'  \neq 0, \quad p,\,p'>0,
$$
and
$$
{\rm gcd}\,(p, q)={\rm gcd}\,(p', q')={\rm gcd}\,(d, k)={\rm gcd}\,(d', k')=1.
$$
Take $m,\,m',\,n,\,n' \in \mathbb{Z}$ satisfying
$$
mp+nq=1,\qquad m'p'+n'q'=1.
$$
Then \eqref{eq:4} can be transformed into
$$
\left\{
\begin{array}{l}
d\theta+km=\dfrac{-q(d\varphi+kn)}{p},
\\[0.3 cm]
d'\theta'+k'm'=\dfrac{-q'(d'\varphi'+k'n')}{p'}.
\end{array}
\right.
$$

Firstly, we discuss the closures of the orbits of $e$ under $L_g$ and $L_{g'}$.
Similar to Case (i), for the left action $L_g$ and  $j=0, 1, \cdots, d-1$, define $F_j:\,S^1 \rightarrow \rm{U}(2)$ by
$$
F_j:\,z\mapsto\left(\begin{array}{cc}
z^p \cdot z^{-q}&0\\
0&z^q
\end{array}\right)\left(\begin{array}{cc}
\textrm{e}^{2\pi{\rm i} j\varphi}\textrm{e}^{2\pi{\rm i} j\theta}&0\\
0&\textrm{e}^{-2\pi{\rm i} j\theta}
\end{array}\right),\qquad \forall\, z \in S^1.
$$
One can see that every $F_j$ is an embedding map,
and consequently, every $H_j:\,S^1 \rightarrow F_j(S^1)$ is a homeomorphism.
Let $f$ be a rotation of $S^1$ with
$$
\rho(f)=\dfrac{d\varphi+kn}{p}+N_1,\qquad N_1 \in \mathbb{Z}.
$$
Then it is not difficult to verify that
$$
H_j \circ f=L_g^d|_{F_j(S^1)} \circ H_j,\qquad H_j(e')=e_j,
$$
and hence
$$
H_j(\overline{Orb_f(e')})=\overline{Orb_{L_g^d}(e_j)}=F_j(S^1),
$$
where
$$
e_j=\left(\begin{array}{cc}
\textrm{e}^{2\pi{\rm i} j\varphi}\textrm{e}^{2\pi{\rm i} j\theta}&0\\
0&\textrm{e}^{-2\pi{\rm i} j\theta}
\end{array}\right),\,\,\,j=0, 1, \cdots, d-1,
$$
and $e',\,e_0=e$ are the identity elements of $S^1$ and ${\rm U}(2)$, respectively.
Thus, $\overline{Orb_{L_g^d}(e_j)} \cong S^1$.
This fact indicates that every $\overline{Orb_{L_g^d}(e_j)}$ is a simple closed curve in ${\rm{U}}(2)$. Furthermore, we have
$$
\overline{Orb_{L_g}(e)}=\overline{Orb_{L_g^d}(e_0)}\cup\overline{Orb_{L_g^d}(e_1)}\cup
\cdots\cup\overline{Orb_{L_g^d}(e_{d-1})}.
$$
According to the definitions of $F_j$, where $j=0, 1, \cdots, d-1$, we see that the simple closed curves
$$
\overline{Orb_{L_g^d}(e_0)},\quad\overline{Orb_{L_g^d}(e_1)}, \cdots,
\overline{Orb_{L_g^d}(e_{d-1})}
$$
are mutually disjoint,
so, $\overline{Orb_{L_g}(e)}$ contains $d$ mutually disjoint simple closed curves in ${\rm{U}}(2)$.
Similarly, $\overline{Orb_{L_{g'}}(e)}$ contains $d'$ mutually disjoint simple closed curves in ${\rm{U}}(2)$.
Thus, $d=d'$.

Next, for the left action $L_{g'}$, define $F':\,S^1 \rightarrow {\rm U}(2)$ by
$$
F':\,z\mapsto\left(\begin{array}{cc}
z^{p'} \cdot z^{-q'}&0\\
0&z^{q'}
\end{array}\right),\qquad \forall\, z \in S^1,
$$
and take a rotation $f'$ on $S^1$ satisfying $$
\rho(f')=\dfrac{d\varphi'+k'n'}{p'}+N_2,\qquad N_2 \in \mathbb{Z}.
$$
Using the above observations, we obtain that $f'$ and $L_{g'}^d|_{F'(S^1)}$ are topologically conjugate,
and
$$
\overline{Orb_{L_{g'}^d}(e)}=F'(S^1).
$$
Since $f$ and $L_g^d|_{F_0(S^1)}$ are topologically conjugate,
$L_g^d|_{F_0(S^1)}$ and $L_{g'}^d|_{F'(S^1)}$ are topologically conjugate,
and then $f$ and $f'$ are topologically conjugate.
Therefore, it follows from Proposition \ref{prop:1} that
$$
\rho(f')=\pm\,\rho(f)\quad({\rm{mod}}\,\,\mathbb{Z}),
$$
i.e.,
$$
\dfrac{d\varphi'+k'n'}{p'}+N_2=\pm\,\left(\dfrac{d\varphi+kn}{p}+N_1\right)\quad({\rm{mod}}\,\,\mathbb{Z}).
$$
Then by the known conditions that $p,\,p > 0$ and $\varphi'=\pm\,\varphi\,\,\,({\rm{mod}}\,\,\mathbb{Z})$ is an irrational number, we obtain $p=p'$.
Hence we can simplify \eqref{eq:4} as follows.
\begin{align}\label{eq:5}
\left\{
\begin{array}{l}
k+pd\theta+qd\varphi=0,
\\
k'+pd\theta'+q'd\varphi'=0.
\end{array}
\right.
\end{align}

On the other hand, it is well-known that ${\rm U}(2)$ can be regarded as a $p$-fold covering space of itself,
and the covering map $\pi:\,{\rm U}(2) \rightarrow {\rm U}(2)$ is defined by
$$
\pi:\,(u, \lambda)\mapsto(u, \lambda^p),\qquad \forall\,\,(u, \lambda) \in {\rm U}(2) \cong {{\rm SU}}(2) \times S^1.
$$
According to the map lifting theorem, we know that the self-homeomorphisms of ${\rm U}(2)$ can always be lifted under the covering map $\pi$.
Then there exist $p$ lifts of $L_g$ denoted by ${\tilde L}_{g_j}$ with
$$
\rho({\tilde L}_{g_j})=\left(\arraycolsep=1pt\begin{array}{c}
\theta\\
\varphi/p+j/p
\end{array}\right),
$$
for $j=0,\,1,\,\cdots,\,p-1$.
Thus, \eqref{eq:5} can be written by
$$
\left\{
\begin{array}{l}
k+pd\theta+qpd\dfrac{\varphi}{p}=0,
\\[0.25 cm]
k'+pd\theta'+q'pd\dfrac{\varphi'}{p}=0.
\end{array}
\right.
$$
Fix one lift ${\tilde L}_{g_1}$ of $L_g$ and one lift $\tilde{h}$ of $h$ satisfying
$$
\rho({\tilde L}_{g_1})=\left(\arraycolsep=1pt\begin{array}{c}
\theta\\
\varphi/p
\end{array}\right),
\qquad\tilde{h}(e)=e.
$$
Lemma \ref{lem:5} implies that there exists some certain lift of $L_{g'}$ denoted by ${\tilde L}_{g_1'}$ such that
$$
\rho({\tilde L}_{g_1'})=\left(\arraycolsep=1pt\begin{array}{c}
\theta\\
\varphi''
\end{array}\right),
\qquad
\tilde{h} \circ {\tilde L}_{g_1}={\tilde L}_{g_1'} \circ \tilde{h}.
$$
Then the previous proof indicates that
$$
\varphi''=\pm\,\dfrac{\varphi}{p}\quad({\rm{mod}}\,\,\mathbb{Z}).
$$
In the next part, we discuss these two cases, respectively.

(a) Assume that $\varphi''=\dfrac{\varphi}{p}$.
Then it is easy to confirm $\varphi'=\varphi$,
so we have
$$
\left\{
\begin{array}{l}
k+pd\theta+qpd\dfrac{\varphi}{p}=0,
\\[0.25 cm]
k'+pd\theta'+q'pd\dfrac{\varphi}{p}=0,
\end{array}
\right.
$$
and
$$
\rho({\tilde L}_{g_1})=\left(\arraycolsep=1pt\begin{array}{c}
\theta\\
\varphi/p                                                                                       \end{array}\right),
\qquad \rho({\tilde L}_{g_1'})=\left(\arraycolsep=1pt\begin{array}{c}
\theta'\\
\varphi/p
\end{array}\right).
$$
Set
$$
G_0=\left\{\left(\begin{array}{cc}
z\omega&0\\
0&\bar{z}
\end{array}\right);\,z\omega^q=1,\,z,\,\omega \in \mathbb{C},\,|z|=|\omega|=1\right\},
$$
$$
G_0'=\left\{\left(\begin{array}{cc}
z\omega&0\\
0&\bar{z}
\end{array}\right);\,z\omega^{q'}=1,\,z,\,\omega \in \mathbb{C},\,|z|=|\omega|=1\right\}.
$$
Then it follows from Lemma \ref{lem:4} that $G_0$ and $G_0'$ are two subgroups of ${\rm U}(2)$ homeomorphic to $S^1$, and
$$
{\rm U}(2)/G_0\,\cong\,{\rm U}(2)/G_0'\,\cong\,L(1, -1)\,\cong S^3\,\cong\,{\rm SU}(2).
$$
In fact, through a simple verification, one can see that
$$
\overline{Orb_{{\tilde L}_{g_1}^{d_1}}(e)}=G_0,\qquad\overline{Orb_{{\tilde L}_{g_1'}^{d_1}}(e)}=G_0',
$$
where $d_1=\dfrac{pd}{{\rm gcd}\,(k, p)}=\dfrac{pd}{{\rm gcd}\,(k', p)}$,
so we have $\tilde{h}(G_0)=G_0'$.
Take left actions $L_1,\,L_1' \in {\mathfrak M}_{T_{{\rm SU}(2)}}$ with
$$
\rho(L_1)=-\dfrac{k}{pd}\quad({\rm{mod}}\,\,\mathbb{Z}),
\qquad \rho(L_1')=-\dfrac{k'}{pd}\quad({\rm{mod}}\,\,\mathbb{Z}).
$$
Then according to the proof of Lemma \ref{lem:4}, we know that the quotient maps
$$
\pi_0:\,{\rm U}(2) \rightarrow {\rm U}(2)/G_0 \cong {\rm SU}(2),
\qquad \pi_0':\,{\rm U}(2) \rightarrow {\rm U}(2)/G_0' \cong {\rm SU}(2)
$$
satisfy that for any $u=(z, \textrm{e}^{2\pi{\rm i}\alpha}, \textrm{e}^{2\pi{\rm i}\beta}) \in {\rm U}(2)$,
$$
\pi_0(z, \textrm{e}^{2\pi{\rm i}\alpha}, \textrm{e}^{2\pi{\rm i}\beta})=(z, \textrm{e}^{2\pi{\rm i}(\alpha+q\beta)}),
\quad \pi_0(z, \textrm{e}^{2\pi{\rm i}\alpha}, \textrm{e}^{2\pi{\rm i}\beta})=(z, \textrm{e}^{2\pi{\rm i}(\alpha+q'\beta)}).
$$
And then we can easily verify that
$$
\pi_0 \circ {\tilde L}_{g_1}(u)=L_1 \circ \pi_0(u),\qquad \pi_0' \circ {\tilde L}_{g_1'}(u)=L_1' \circ \pi_0'(u),
\qquad \forall\,u \in {\rm U}(2)
$$
i.e.,
$$
\pi_0 \circ {\tilde L}_{g_1}=L_1 \circ \pi_0,\qquad \pi_0' \circ {\tilde L}_{g_1'}=L_1' \circ \pi_0'.
$$
And Lemma \ref{lem:1} implies that there exists some homeomorphism $h':\,{\rm SU}(2) \rightarrow {\rm SU}(2)$ such that $$
\pi_0' \circ \tilde{h}=h' \circ \pi_0,
$$
and obviously, $h'(e'')=e''$, where $e''$ is the identity element of ${\rm SU}(2)$.
Thus, by the same way in the previous analysis, we obtain
$$
h' \circ L_1(u)=L_1' \circ h'(u),
\qquad \forall\, u \in {\rm SU}(2),
$$
that means $L_1$ and $L_1'$ are topologically conjugate.
Therefore, it follows from Theorem \ref{the:1} that
$$
\rho(L_1')=\pm\,\rho(L_1)\quad({\rm{mod}}\,\,\mathbb{Z}),
\quad\hbox{i.e.}\quad \dfrac{k'}{pd}=\pm\,\dfrac{k}{pd}\quad({\rm{mod}}\,\,\mathbb{Z}).
$$
If we fix another lift of $L_g$ denoted by ${\tilde L}_{g_2}$ satisfying $\rho({\tilde L}_{g_2})=\left(\arraycolsep=1pt\begin{array}{c}
\theta\\
\varphi/p+1/p
\end{array}\right)$,
according to Lemma \ref{lem:5}, there exists some certain lift ${\tilde L}_{g_2'}$ of $L_{g'}$ such that
$$
\tilde{h} \circ {\tilde L}_{g_2}={\tilde L}_{g_2'} \circ \tilde{h}.
$$
And together with the fact $\varphi'=\varphi$,
we get $\rho({\tilde L}_{g_2'})=\left(\arraycolsep=1pt\begin{array}{c}
\theta'\\
\varphi/p+1/p                                                                                     \end{array}\right)$.
Similar to $L_1,\,L_1'$, we can take two left actions $L_2,\,L_2' \in {\mathfrak M}_{T_{{\rm SU}(2)}}$ with
$$
\rho(L_2)=-\dfrac{k}{pd}+\dfrac{q}{p}\quad({\rm{mod}}\,\,\mathbb{Z}),
\qquad \rho(L_2')=-\dfrac{k'}{pd}+\dfrac{q'}{p}\quad({\rm{mod}}\,\,\mathbb{Z}),
$$
and then we can prove that
$$
\pi_0 \circ {\tilde L}_{g_2}=L_2 \circ \pi_0,\qquad \pi_0' \circ {\tilde L}_{g_2'}=L_2' \circ \pi_0',
$$
and $h'$ is also the topological conjugacy from $L_2$ to $L_2'$.
Thus, Theorem \ref{the:1} indicates that
$$
\rho(L_2')=\pm\,\rho(L_2)\quad({\rm{mod}}\,\,\mathbb{Z}),
\quad\hbox{i.e.}\quad
\dfrac{k'-qd}{pd}=\pm\,\dfrac{k-q'd}{pd}\quad({\rm{mod}}\,\,\mathbb{Z}).
$$
Assume that
$$
k = m_1\quad({\rm{mod}}\,\,pd),\qquad k' = m_1'\quad({\rm{mod}}\,\,pd),
$$
and
$$
k-qd = m_2\quad({\rm{mod}}\,\,pd),\qquad k'-q'd = m_2'\quad({\rm{mod}}\,\,pd).
$$
Then one can see that
$$
m_1'=\pm\,m_1\quad({\rm{mod}}\,\,pd),
\qquad m_2'=\pm\,m_2\quad({\rm{mod}}\,\,pd).
$$

On the other hand, similarly, take left actions $L_j,\,L_j' \in {\mathfrak M}_{T_{{\rm SU}(2)}}$ with
$$
\rho(L_j)=-\dfrac{k}{pd}+\dfrac{qj}{p}\quad({\rm{mod}}\,\,\mathbb{Z}),
\qquad \rho(L_j')=-\dfrac{k'}{pd}+\dfrac{q'j}{p}\quad({\rm{mod}}\,\,\mathbb{Z}),
$$
and then we can prove
$$
\pi_0 \circ {\tilde L}_{g_j}=L_j \circ \pi_0,\qquad \pi_0' \circ {\tilde L}_{g_j'}=L_j' \circ \pi_0',
$$
and $h'$ is a topological conjugacy from $L_j$ to $L_j'$.
Set $p={\rm gcd}\,(p, d)\cdot p_1$. Then $pd={\rm gcd}\,(p, d) \cdot p_1 \cdot d$.
And the known condition ${\rm gcd}\,(k, d)=1$ implies that ${\rm gcd}\,(-k+jd, d)=1$,
and hence ${\rm gcd}\,(-k+jd, (p, d))=1$.
Since ${\rm gcd}\,(p_1, d)=1$, there exist some $m,\,n \in \mathbb{Z}$ such that
$$
md-np_1=k+1,\quad\hbox{i.e.}\quad -k+md=np_1+1.
$$
And it is easy to know that ${\rm gcd}\,(-k+md, p_1)=1$.
Thus, we get
$$
{\rm gcd}\,(-k+md, pd)=1.
$$
Notice that ${\rm gcd}\,(p, q)=1$, and then there exists some $j_0 \in \{0, 1,\cdots, p-1\}$ such that $j_0q = m\,\,\,({\rm mod}\,\,\,p)$,
so we have
$$
{\rm gcd}\,(-k+qj_0d, pd)=1.
$$
It is known that $h'$ is a topological conjugacy from $L_{j_0}$ to $L_{j_0'}$ satisfying $h'(e'')=e''$,
where $e''$ is the identity element of ${{\rm SU}}(2)$.
Then together with the fact ${\rm gcd}\,(-k+qj_0d, pd)=1$, one can see that
$$
h'(\overline{Orb_{L_{j_0}}(e'')})=\overline{Orb_{L_{j_0'}}(e'')} \cong \mathbb{Z}_{pd},
$$
and then we have
$$
{\rm SU}(2)/\overline{Orb_{L_{j_0}}(e'')}\cong {\rm SU}(2)/\overline{Orb_{L_{j_0'}}(e'')}\cong  {\rm SU}(2)/\mathbb{Z}_{nd} \cong L(pd, -1).
$$
Hence Lemma \ref{lem:1} indicates that $h'$ naturally induces a homeomorphism $h'':\,L(pd, -1) \rightarrow L(pd, -1)$ such that
$$
\pi' \circ h'=h'' \circ \pi',
$$
where $\pi'$ is the universal covering map from ${{\rm SU}}(2)$ to $L(pd, -1)$.

Set
$$
\pi_1(L(pd, -1))=\{[\alpha_j];\,\,\,j=1,\,2,\,\cdots,\,pd\}\,\cong\,\mathbb{Z}_{pd},
$$
where $\alpha_j$ is a loop in $L(pd, -1)$ with the base point $\pi'(e'')$,
and $[\alpha_j]$ denotes the homotopy class of $\alpha_j$.
Let ${\tilde \alpha}_j$ be the unique lift of $\alpha_j$ satisfying ${\tilde \alpha}_j(0)=e''$.
Then ${\tilde \alpha}_j$ is a continuous curve connecting the elements $e''=\left(\begin{array}{cc}
1&0\\
0&1
\end{array}\right)$ and $\left(\begin{array}{cc}
{\textrm{e}}^{2\pi{\rm i} j/pd}&0\\
0&{\textrm{e}}^{-2\pi{\rm i} j/pd}
\end{array}\right)$.
By the known conditions,
we have
$$
h'(\tilde{\alpha}_{m_1})=\tilde{\alpha}_{m_1'},
$$
and then the commutative diagram in Section \ref{sec:3} indicates that
$$
h''(\alpha_{m_1})=\alpha_{m_1'},\quad\hbox{i.e.}\quad h_*''([\alpha_{m_1}])=[\alpha_{m_1'}],
$$
where $h_*''$ is the automorphism of $\pi_1(L(pd, -1))$ induced by $h''$.
When $m_1'=m_1$, we have $h_*''([\alpha_{m_1}])=[\alpha_{m_1}]$.
Then it follows from Lemma \ref{lem:2} that
$$
h_*''(l)=l,
\qquad \forall\, l \in \pi_1(L(pd, -1)),
$$
So we get
$$
k' = k\quad({\rm{mod}}\,\,pd),\quad\hbox{i.e.}\quad pd \mid k-k'.
$$
And then for $L_2$ and $L_2'$, one can see that
$$
h_*''([\alpha_{m_2}])=[\alpha_{m_2'}]=[\alpha_{m_2}].
$$
Hence $m_2'=m_2$, that means
$$
k'-q'd=k-qd\quad({\rm{mod}}\,\,pd).
$$
Sine $k' = k\,\,({\rm{mod}}\,\,pd)$, and then
$$
q' = q\quad({\rm{mod}}\,\,p),\quad\hbox{i.e.}\quad p \mid q-q'.
$$
Together with \eqref{eq:5}, we obtain
$$
\theta'=\theta+\dfrac{q-q'}{p}\varphi+\dfrac{k-k'}{pd},
$$
i.e.,
$$
\theta'=\theta+n\varphi+n',\qquad n, n' \in \mathbb{Z}.
$$
When $m_1'=pd-m_1$, we have $h_*''([\alpha_{m_1}])=-[\alpha_{m_1}]$.
Then it follows from Lemma \ref{lem:2} that
$$
h_*''(l)=-l,
\qquad \forall\, l \in \pi_1(L(pd, -1)),
$$
So we get
$$
k+k' = 0\quad({\rm{mod}}\,\,pd),\quad\hbox{i.e.}\quad pd \mid k+k'.
$$
And then for $L_2$ and $L_2'$, one can see that
$$
h''(\alpha_{m_2})=\alpha_{m_2'},\quad\hbox{i.e.}\quad h_*''([\alpha_{m_2}])=[\alpha_{m_2'}]=-[\alpha_{m_2}].
$$
Hence $m_2'=pd-m_2$, that means
$$
k'-q'd=-(k-qd)\quad({\rm{mod}}\,\,pd).
$$
Sine $k+k' = 0\,\,\,({\rm{mod}}\,\,\,pd)$, and then
$$
q+q' = 0\quad({\rm{mod}}\,\,p),\quad\hbox{i.e.}\quad p \mid q+q'.
$$
Combining with \eqref{eq:5}, we obtain
$$
\theta'=-\theta-\dfrac{q+q'}{p}\varphi-\dfrac{k+k'}{pd},
$$
i.e.,
$$
\theta'=-\theta+n\varphi+n',\qquad n, n' \in \mathbb{Z}.
$$

(b) Assume that $\varphi''=1-\dfrac{\varphi}{p}$.
Then it is easy make sure $\varphi'=1-\varphi$,
so we have
$$
\left\{
\begin{array}{l}
k+pd\theta+qpd\dfrac{\varphi}{p}=0,
\\[0.25 cm]
k'-(p-1)q'd+pd\theta'+q'pd\left(1-\dfrac{\varphi}{p}\right)=0,
\end{array}
\right.
$$
$$
\rho(\tilde{L}_{g_1})=\left(\arraycolsep=1pt\begin{array}{c}
\theta\\
\varphi/p                                                                                       \end{array}\right),
\qquad \rho(\tilde{L}_{g_1'})=\left(\arraycolsep=1pt\begin{array}{c}
\theta'\\
1-\varphi/p
\end{array}\right),
$$
and $h$ is a topological conjugacy from ${\tilde L}_{g_1}$ to ${\tilde L}_{g_1'}$ satisfying $\tilde{h}(e)=e$.
Similar to Case (a),
one can see that there exist some left actions $L_1,\,L_1' \in T_{{\rm SU}(2)}$ and a homeomorphism $h':\,{\rm SU}(2) \rightarrow {\rm SU}(2)$ with
$$
\rho(L_1)=-\dfrac{k}{pd}\,\,\,({\rm{mod}}\,\,\mathbb{Z}),
\quad \rho(L_1')=-\dfrac{k'-(p-1)q'd}{pd}\,\,\,({\rm{mod}}\,\,\mathbb{Z}),
\quad h'(e'')=e''
$$
such that
$$
\pi_0 \circ L_{g_1}=L_1 \circ \pi_0,\qquad \pi_0' \circ L_{g_1'}=L_1' \circ \pi_0',
\qquad \pi_0' \circ \tilde{h}=h' \circ \pi_0,
$$
where $\pi_0,\,\pi_0':\,{\rm U}(2) \rightarrow {\rm SU}(2)$ are the quotient maps like those in Case (a), $e''$ is the identity element of ${\rm SU}(2)$,
and then $h'$ is a topological conjugacy from $L_1$ to $L_1'$.
According to Theorem \ref{the:1}, we get
$$
\rho(L_1')=\pm\,\rho(L_1)\quad({\rm{mod}}\,\,\mathbb{Z}),
$$
i.e.,
$$
\dfrac{k'-(p-1)q'd}{pd}=\pm\,\dfrac{k}{pd}\quad({\rm{mod}}\,\,\mathbb{Z}).
$$
If we fix another lift ${\tilde L}_{g_2}$ of $L_g$ satisfying $\rho({\tilde L}_{g_2})=\left(\arraycolsep=1pt\begin{array}{c}
\theta\\
\varphi/p+1/p                                                                                    \end{array}\right)$, Lemma \ref{lem:5} indicates that there exists some lift ${\tilde L}_{g_2'}$ of $L_{g'}$ such that
$$
\tilde{h} \circ {\tilde L}_{g_2}={\tilde L}_{g_2'} \circ \tilde{h},
$$
and then we obtain $\rho({\tilde L}_{g_2'})=\left(\arraycolsep=1pt\begin{array}{c}
\theta'\\
(p-1)/p-\varphi/p                                                                                     \end{array}\right)$.
Hence by the same way, we can take two left actions $L_2,\,L_2' \in {\mathfrak M}_{T_{{\rm SU}(2)}}$ with
$$
\rho(L_2)=-\dfrac{k}{pd}+\dfrac{q}{p}\quad({\rm{mod}}\,\,\mathbb{Z}),
\qquad \rho(L_2')=-\dfrac{k'-(p-2)q'd}{pd}\quad({\rm{mod}}\,\,\mathbb{Z}),
$$
and then we can prove that
$$
\pi_0 \circ {\tilde L}_{g_2}=L_2 \circ \pi_0,\qquad \pi_0' \circ {\tilde L}_{g_2'}=L_2' \circ \pi_0'.
$$
and $h'$ is also a topological conjugacy from $L_2$ to $L_2'$.
Thus, it follows from Theorem \ref{the:1} that
$$
\rho(L_2')=\pm\,\rho(L_2)\quad({\rm{mod}}\quad\mathbb{Z}),
$$
i.e.,
$$
\dfrac{k'-(p-2)q'd}{pd}=\pm\,\dfrac{k-qd}{pd}\quad({\rm{mod}}\,\,\mathbb{Z}).
$$
Assume that
$$
k = m_1\quad({\rm{mod}}\,\,pd),\qquad k'-(p-1)q'd = m_1'\quad({\rm{mod}}\,\,pd),
$$
and
$$
k-qd = m_2\quad({\rm{mod}}\,\,pd),\qquad k'-(p-2)q'd = m_2'\quad({\rm{mod}}\,\,pd).
$$
Then one can see that
$$
m_1'=\pm\,m_1\quad({\rm{mod}}\,\,pd),
\qquad m_2'=\pm\,m_2\quad({\rm{mod}}\,\,pd).
$$
And then using the same way in Case (a), we have
$$
h''(\alpha_{m_1})=\alpha_{m_1'},\quad\hbox{i.e.}\quad h_*''([\alpha_{m_1}])=[\alpha_{m_1'}],
$$
where $h''$ is a self-homeomorphism of $L(pd, -1)$ induced by $h'$,
and $h_*''$ is the automorphism of $\pi_1(L(pd, -1))$ induced by $h''$.
When $m_1'=m_1$, we see $h_*''([\alpha_{m_1}])=[\alpha_{m_1}]$.
Then Lemma \ref{lem:2} implies that
$$
h_*''(l)=l,
\qquad \forall\, l \in \pi_1(L(pd, -1)),
$$
So we get
$$
k-k'+(p-1)q'd = 0\quad({\rm{mod}}\,\,pd),\quad\hbox{i.e.}\quad pd \mid k-k'-q'd.
$$
And then for $L_2$ and $L_2'$, one can see that
$$
h''(\alpha_{m_2})=\alpha_{m_2'},\quad\hbox{i.e.}\quad h_*''([\alpha_{m_2}])=[\alpha_{m_2'}]=[\alpha_{m_2}].
$$
Hence $m_2'=m_2$, that means
$$
k'-(p-2)q'd=k-qd\quad({\rm{mod}}\,\,pd).
$$
Since $k-k'+(p-1)q'd = 0\,\,\,({\rm{mod}}\,\,pd)$, and then
$$
q'+q = 0\quad({\rm{mod}}\,\,p),\quad\hbox{i.e.}\quad p \mid q+q'.
$$
Together with \eqref{eq:5}, we obtain
$$
\theta'=\theta+\dfrac{q+q'}{p}\varphi+\dfrac{k-k'-q'd}{pd},
$$
i.e.,
$$
\theta'=\theta+n\varphi+n',\qquad n, n' \in \mathbb{Z}.
$$
When $m_1'=pd-m_1$, we have $h_*''([\alpha_{m_1}])=-[\alpha_{m_1}]$.
Then Lemma \ref{lem:2} indicates that
$$
h_*''(l)=-l,
\qquad \forall\, l \in \pi_1(L(pd, -1)),
$$
So we get
$$
k+k'-(p-1)q'd = 0\quad({\rm{mod}}\,\,pd),\quad\hbox{i.e.}\quad pd \mid k+k'+q'd.
$$
And then for $L_2$ and $L_2'$, one can see that
$$
h''(\alpha_{m_2})=\alpha_{m_2'},\quad\hbox{i.e.}\quad h_*''([\alpha_{m_2}])=[\alpha_{m_2'}]=-[\alpha_{m_2}].
$$
Hence $m_2'=pd-m_2$, that means
$$
\dfrac{k'-(p-2)q'd}{pd}=-(\dfrac{k}{pd}-\dfrac{q}{p})\quad({\rm{mod}}\,\,\mathbb{Z}).
$$
Sine $k+k'-(p-1)q'd = 0\,\,\,({\rm{mod}}\,\,pd)$, and then
$$
q-q' = 0\quad({\rm{mod}}\,\,p),\quad\hbox{i.e.}\quad p \mid q-q'.
$$
Combining with \eqref{eq:5}, we obtain
$$
\theta'=-\theta-\dfrac{q-q'}{p}\varphi-\dfrac{k+k'+q'd}{pd},
$$
i.e.,
$$
\theta'=-\theta+n\varphi+n',\qquad n, n' \in \mathbb{Z}.
$$

To sum up, in this case, $\theta,\,\theta'$ and $\varphi$ satisfy
$$
\theta'=\pm\,\theta+n\varphi+n',\qquad n, n' \in \mathbb{Z}.
$$

(iii) Suppose that $\theta$ is an irrational number, and $\theta'=\dfrac{m}{d'}$ is a rational number with ${\rm gcd}\,(m, d')=1$.
Set
$$
k+pd\theta+qd\varphi=0,\qquad -m+d'\theta'+0\cdot\varphi'=0,
$$
where $d,\,k,\,p,\,q \in \mathbb{Z},\,d,\,p,\,q \neq 0,\,p > 0,\,{\rm gcd}\,(p, q)=1$,
and $h$ is a topological conjugacy from $L_g$ to $L_{g'}$ satisfying
$$
h(\overline{Orb_{L_g}(e)})=\overline{Orb_{L_{g'}}(e)}.
$$
Then the analysis in Case (ii) tells us that $d'=d$ and $p=1$.
So we have
$$
\theta+q\varphi+\dfrac{k}{d}=0,\quad \theta'-\dfrac{m}{d}=0,
\quad\hbox{i.e.}\quad \theta'=\theta+q\varphi+\dfrac{k+m}{d}.
$$
Assume that $k+m = d_1\,\,\,({\rm{mod}}\,\,\,d)$. Then we obtain
$$
\theta'-\dfrac{d_1}{d}=\theta+q\varphi+N,\qquad N \in \mathbb{Z}.
$$
Take another left action $L_{g''} \in \mathfrak{M}_{T_{{\rm U}(2)}}$ with
$$
\rho(L_{g''})=\left\{\begin{array}{ll}
{\left(\arraycolsep=1pt\begin{array}{c}
\theta'-\dfrac{d_1}{d}\\
\varphi'
\end{array}\right)},&\hbox{~$m \geq d_1$~},
\\[8 mm]
{\left(\arraycolsep=1pt\begin{array}{c}
\theta'-\dfrac{d_1}{d}+1\\
\varphi'
\end{array}\right)},&\hbox{~$m < d_1$~}.
\end{array}\right.
$$
It follows from the sufficiency of Theorem \ref{the:2} that $L_g$ and $L_{g''}$ are topologically conjugate, and then $L_{g'}$ and $L_{g''}$ are topologically conjugate.
Thus, according to Case (i), one can see that
$$
\theta'-\dfrac{d_1}{d}=\pm\,\theta'\quad({\rm{mod}}\,\,\,\mathbb{Z}).
$$
And together with the known condition $\theta'=\dfrac{m}{d}$, we get
$$
d_1 = 0\quad({\rm{mod}}\,\,\,d)
\quad\hbox{or}\quad d_1 = 2m\quad({\rm{mod}}\,\,\,d),
$$
so
$$
\theta'=\theta+q\varphi+n
\quad\hbox{or}\quad \theta'=-\theta-q\varphi+n',\qquad q,\,n,\,n' \in \mathbb{Z}.
$$

{\bf Case 4.} Assume that $\theta$ and $\varphi$ are rationally dependent, $\theta'$ and $\varphi'$ are also rationally dependent, and $\varphi=\dfrac{n}{m}$ is a rational number with ${\rm gcd}\,(m, n)=1$.
Then we know $\varphi'=\dfrac{n}{m}$ or $1-\dfrac{n}{m}$ according to the previous discussion.
There are three different cases.

(iv) Suppose that $\theta$ is a rational number, $\theta'$ is an irrational number,
and $h$ is a topological conjugacy from $L_g$ to $L_{g'}$ satisfying
$$
h(\overline{Orb_{L_g}(e)})=\overline{Orb_{L_{g'}}(e)}.
$$
It is easy to show that $\overline{Orb_{L_g}(e)}$ is a discrete set consisting of finite elements,
but by the similar proof to Case (i), we know that $\overline{Orb_{L_{g'}}(e)}$ contains $m$ mutually disjoint simple closed curves.
This fact is in contradiction to $h$ being a homeomorphism.
Thus, $L_g$ and $L_{g'}$ can not be topologically conjugate in this case.

(v) Suppose that $\theta$ and $\theta'$ are both irrational numbers.
According to the known conditions, one can see that $L_g^m$ and $L_{g'}^m$ are two left actions in $\mathfrak{M}_{T_{{\rm U}(2)}}$ with
$$
\rho(L_g^m)=\left(\arraycolsep=1pt\begin{array}{c}
m\theta+n_1\\
0
\end{array}\right),\quad \rho(L_{g'}^m)=\left(\arraycolsep=1pt\begin{array}{c}
m\theta'+n_2\\
0
\end{array}\right),\quad n_1,\,n_2 \in \mathbb{Z},
$$
and
$$
h \circ L_g^m=L_{g'}^m \circ h,\qquad h(e)=e,
$$
Define $f:\,S^1 \rightarrow {\rm U}(2)$ by
$$
f:\,z\mapsto\left(\begin{array}{cc}
z&0\\
0&\bar{z}
\end{array}\right),\qquad \forall\, z \in S^1.
$$
It is easy to see that $f$ is an embedding map, and then $h':\,S^1 \rightarrow f(S^1)$ is a homeomorphism.
Let $f_1,\,f_2$ be two rotations of $S^1$ with
$$
\rho(f_1)=m\theta+n_1,\qquad \rho(f_2)=m\theta'+n_2.
$$
Then it is not difficult to verify that
$$
h' \circ f_1=L_g^m|_{f(S^1)} \circ h',\qquad h' \circ f_2=L_{g'}^m|_{f(S^1)} \circ h',
$$
so $f_1$ and $L_g^m|_{f(S^1)}$ are topologically conjugate, $f_2$ and $L_{g'}^m|_{f(S^1)}$ are topologically conjugate,
and hence $f_1$ and $f_2$ are topologically conjugate.
Thus, Proposition \ref{prop:1} indicates that
$$
\rho(f_2)=\pm\,\rho(f_1)\quad({\rm{mod}}\,\,\mathbb{Z}),
\quad\hbox{i.e.}\quad m\theta'=\pm\,m\theta\quad({\rm{mod}}\,\,\mathbb{Z}),
$$
and then
$$
\theta'=\pm\,\theta+\dfrac{N}{m},\qquad N \in \mathbb{Z}.
$$
Assume that $N = m_1\,\,({\rm{mod}}\,\,m)$.
Then it follows from the fact ${\rm gcd}\,(m, n)$=1 that there exists some $n'\in\mathbb{Z}$ such that $$
nn' = m_1\quad({\rm{mod}}\,\,m),
$$
so we have
$$
\theta'=\pm\,\theta+\dfrac{N}{m}-\dfrac{nn'}{m}+\dfrac{nn'}{m}=\pm\,\theta+n'\varphi+n'',
\qquad n',\,n'' \in \mathbb{Z}.
$$

(vi) Suppose that $\theta$ and $\theta'$ are both rational numbers.
According to the known conditions, one can see that $L_g^m$ and $L_{g'}^m$ are two left actions in $\mathfrak{M}_{T_{{\rm U}(2)}}$ with
$$
\rho(L_g^m)=\left(\arraycolsep=1pt\begin{array}{c}
q/p\\
0
\end{array}\right),
\qquad\rho(L_{g'}^m)=\left(\arraycolsep=1pt\begin{array}{c}
q'/p'\\
0
\end{array}\right),
$$
where $p,\,p',\,q,\,q' \in \mathbb{Z},\,\,{\rm gcd}\,(p, q)={\rm gcd}\,(p', q')=1$,
and
$$
h \circ L_g^m=L_{g'}^m \circ h,\qquad h(e)=e,
$$
i.e.,
$$
h(\overline{Orb_{{L_g}^m}(e)})=\overline{Orb_{L_{g'}^m}(e)}.
$$
Thus, it is easy to know that $\overline{Orb_{{L_g}^m}(e)}$ contains $p$ points,
and $\overline{Orb_{L_{g'}^m}(e)}$ contains $p'$ points,
so $p=p'$, and then we have
$$
\overline{Orb_{{L_g}^m}(e)}=\overline{Orb_{L_{g'}^m}(e)}\,\cong\,\mathbb{Z}_p.
$$
Consequently, the forms of $\rho(L_g^m)$ and $\rho(L_{g'}^m)$ indicate that
$$
{\rm U}(2)/\overline{Orb_{{L_g}^m}(e)}={\rm U}(2)/\overline{Orb_{L_{g'}^m}(e)}\,\cong\,{\rm SU}(2)/\mathbb{Z}_p \times S^1\,\cong\,L(p, -1)\times S^1.
$$
By this way, ${\rm U}(2) \cong {\rm SU}(2) \times S^1$ can be regarded as a covering space of $L(p, -1) \times S^1$,
and the covering map $\pi_0$ satisfies that $\pi_0=\pi_0' \times {\rm id}_{S^1}$,
where $\pi_0'$ is the universal covering map from ${\rm SU}(2)$ to $L(p, -1)$,
and ${\rm id}_{S^1}$ is the identity map of $S^1$.
Then it follows from Lemma \ref{lem:1} that there exists some homeomorphism $h':\,L(p, -1)\times S^1 \rightarrow L(p, -1)\times S^1$ induced by $h$ such that
$$
\pi_0 \circ h=h' \circ \pi_0.
$$
Investigate the following diagram.
$$
\xymatrix{
{\,{{\rm SU}}(2)\,} \ar[d]_-{\pi_0'} \ar[r]^-{i}
& {\,{{\rm SU}}(2) \times S^1\,} \ar[d]_-{\pi_0} \ar[r]^-{h}
& {\,{{\rm SU}}(2) \times S^1\,} \ar[d]_-{\pi_0} \ar[r]^-{\pi}
& {\,{{\rm SU}}(2)\,} \ar[d]_-{\pi_0'}\\
{\,L(p, -1)\,\,} \ar[r]_-{i'}
& {\,L(p, -1) \times S^1\,} \ar[r]_-{h'}
& {\,L(p, -1) \times S^1\,} \ar[r]_-{\pi'}
& {\,L(p, -1)\,}.}
$$
In this diagram, $\pi_0,\,\pi_0'$ are the covering maps, $i,\,i'$ are the natural inclusion maps, $\pi,\,\pi'$ are the projections, and $i,\,i',\,\pi,\,\pi'$ satisfy
$$
\pi_0 \circ i=i' \circ \pi_0',
\qquad \pi_0' \circ \pi=\pi' \circ \pi_0.
$$
Then together with the fact $\pi_0 \circ h=h' \circ \pi_0$, one can see that the above diagram is commutative.

On the other hand, regard $L(p, -1) \times \mathbb{R}$ as another covering space of $L(p, -1) \times S^1$,
and the covering map $\pi_0''$ satisfies that $\pi_0''={\rm id}_{L(p, -1)} \times \pi_{\mathbb{R}}$,
where ${\rm id}_{L(p, -1)}$ is the identity map of $L(p, -1)$,
and $\pi_{\mathbb{R}}$ is the universal covering map from $\mathbb{R}$ to $S^1$.
Then according to the map lifting theorem, one can see that each self-homeomorphism of $L(p, -1) \times S^1$ can be lifted under the covering map $\pi_0''$.
Investigate the following diagram.
$$
\xymatrix{
{L(p, -1)} \ar[d]_-{{\rm id}_{L(p, -1)}} \ar[r]^-{i''}
& {L(p, -1) \times \mathbb{R}} \ar[d]_-{\pi_0''} \ar[r]^-{h''}
& {L(p, -1) \times \mathbb{R}} \ar[d]_-{\pi_0''} \ar[r]^-{\pi''}
& {L(p, -1)} \ar[d]_-{{\rm id}_{L(p, -1)}}\\
{L(p, -1)} \ar[r]_-{i'}
& {L(p, -1) \times S^1} \ar[r]_-{h'}
& {L(p, -1) \times S^1} \ar[r]_-{\pi'}
& {L(p, -1)}.}
$$
In the diagram, $h''$ is a lift of $h'$,
$i',\,i''$ are the natural inclusion maps, $\pi',\,\pi''$ are the projections, and $i',\,i'',\,\pi',\,\pi''$ satisfy
$$
i''(L(p, -1))=L(p, -1) \times \{0\},
\,\,\,\pi_0'' \circ i''=i' \circ {\rm id}_{L(p, -1)},
\,\,\,{\rm id}_{L(p, -1)} \circ \pi''=\pi' \circ \pi_0''.
$$
Then this diagram above is commutative.

Set $f=\pi'' \circ h'' \circ i''=\pi' \circ h' \circ i'$. It is easy to see that $\deg f=1$.
Then combining with the above commutative diagrams and the discussion in Section \ref{sec:3}, we obtain
$$
f_*([\alpha_q])=[\alpha_{q'}],
$$
where $f_*$ is the endomorphism of $\pi_1(L(p, -1))$ induced by $f$, and $[\alpha_q],\,[\alpha_q'] \in\pi_1(L(p, -1))$.

As well-known $L(p, -1)$ is a compact manifold, so obviously, there exists some $a\in\mathbb{R}$ such that
$$
h''(L(p, -1) \times [0, +\infty)) \subset L(p, -1) \times (a, +\infty).
$$
Set $W=L(p, -1) \times [a, +\infty)\backslash h''(L(p, -1) \times (0, +\infty))$, and
$$
M_0=h''(L(p, -1) \times \{0\}),\quad M_1=L(p, -1) \times \{a\} \cong L(p, -1).
$$
One can see that $(W,\,M_0,\,M_1)$ is a topological $h$-cobordism with $M_0$ and $M_1$.
Then Lemma \ref{lem:6} indicates that
$$
L(p, -1) \times [0, 1] \cong W.
$$
Let $H$ be a homeomorphism from $L(p, -1) \times [0, 1]$ to $W$ satisfying
$$
H(L(p, -1) \times \{0\})=M_0,\quad H(L(p, -1) \times \{1\})=M_1,
$$
and
$$
M_0'=L(p, -1) \times \{0\},\qquad M_1'=L(p, -1) \times \{1\},
$$
And take two embedding maps
$$
f_0:L(p, -1) \rightarrow L(p, -1) \times [0, 1],\quad f_1:\,L(p, -1) \rightarrow L(p, -1) \times [0, 1]
$$
satisfying
$$
f_0(L(p, -1))=M_0',\qquad f_1(L(p, -1))=M_1',
$$
respectively.
Then we know that $H \circ f_0,\,H \circ f_1:\,L(p, -1) \rightarrow W$ are also embedding maps.
Define a map $F:\,L(p, -1) \times [0, 1] \rightarrow L(p, -1) \times [0, 1]$ by
$$
F:\,(x, t) \mapsto (h_0^{-1} \circ h''|_{L(p, -1) \times \{0\}}f_0(x), t),
\qquad \forall\,(x, t) \in L(p, -1) \times [0, 1],
$$
where $h_0=H|_{M_0'}\circ f_0:\,L(p, -1) \rightarrow M_0$ is a homeomorphism.
Then for the map $H \circ F:\,L(p, -1) \times [0, 1] \rightarrow W$,
it is easy to see that $H \circ F|_{L(p, -1) \times \{t\}}$ is always an embedding map for any $t \in [0, 1]$, and $H \circ F$ satisfies
\begin{gather*}
H \circ F(x, 0)=h''|_{L(p, -1) \times \{0\}}\circ f_0(x),
\\H \circ F(x, 1)=h_1 \circ h_0^{-1} \circ h''|_{L(p, -1) \times \{0\}}\circ f_0(x),
\end{gather*}
where $h_1=H|_{M_1'} \circ f_1:\,L(p, -1) \rightarrow M_1$ is a homeomorphism.
Therefore, $H \circ F$ is an isotopy map from $h''|_{L(p, -1) \times \{0\}}\circ f_0$ to $h_1 \circ h_0^{-1} \circ h''|_{L(p, -1) \times \{0\}}\circ f_0$.
Then
$$
\pi'' \circ h''|_{L(p, -1) \times \{0\}}\circ f_0 \simeq \pi'' \circ h_1 \circ h_0^{-1} \circ h''|_{L(p, -1) \times \{0\}}\circ f_0,
$$
where $\pi'':\,L(p, -1) \times \mathbb{R} \rightarrow L(p, -1)$ is the projection in the second diagram above.
One can see that
$$
\pi'' \circ h''|_{L(p, -1) \times \{0\}}\circ f_0=\pi'' \circ h'' \circ i''=f,
$$
and $\pi \circ h_1 \circ h_0^{-1} \circ h|_{L(p, -1) \times \{0\}}\circ f_0$ is an orientation-preserving self-homeomorphism of $L(p, -1)$.
Then together with the fact $\deg f=1$ and Lemma \ref{lem:2},  we obtain
$$
f_*([a_q])=\pm [a_q]=[a_{q'}].
$$
Hence
$$
q'=q \quad\hbox{or}\quad q+q'=p,
$$
i.e.,
$$
\dfrac{q'}{p}=\pm\,\dfrac{q}{p}\quad(\rm{mod}\,\,\,\mathbb{Z}).
$$
And then if follows from the known conditions that
$$
m\theta'=\pm\,m\theta\quad(\rm{mod}\,\,\,\mathbb{Z}).
$$
Thus, we have $\theta'=\pm\,\theta+\dfrac{N}{m}$.
Assume that $N = m_1\,\,\,(\rm{mod}\,\,\,m)$.
Since $\varphi=\dfrac{n}{m}$ with ${\rm gcd}\,(m, n)=1$,
one can see that there exists some $n' \in \mathbb{Z}$ such that
$$
nn' = m_1\quad(\rm{mod}\,\,\,m).
$$
Therefore
$$
\theta'=\pm\,\theta+\dfrac{N}{m}=\pm\,\theta+\dfrac{N}{m}-\dfrac{nn'}{m}+\dfrac{nn'}{m}
=\pm\,\theta+n'\varphi+n'',
\quad n',\,n'' \in \mathbb{Z}.
$$

As a result , if $L_g$ and $L_{g}'$ are topologically conjugate satisfying
$$
\rho(L_g)=\left(\arraycolsep=1pt\begin{array}{c}
\theta\\
\varphi
\end{array}\right),\qquad \rho(L_{g'})=\left(\arraycolsep=1pt\begin{array}{c}
\theta'\\
\varphi'
\end{array}\right),
$$
we obtain
$$
\theta'=\pm\,\theta+n\varphi+n',
\qquad n,\,n' \in \mathbb{Z}.
$$
And together with the previous result
$$
\varphi'=\pm\,\varphi\quad(\rm{mod}\,\,\,\mathbb{Z}),
$$
we claim that the necessity of Theorem \ref{the:2} is true.
\end{proof}

\begin{remark}
It is noteworthy of the discussion in Case (vi). One can see that for any homeomorphism $h:\,L(p, -1)\times S^1\rightarrow L(p, -1)\times S^1$, the induced isomorphism $(\pi\circ h\circ i)_*$ maps each element in the fundamental group of $L(p,1)$ to itself or its inverse, where $i:L(p, -1)\rightarrow L(p, -1)\times S^1$ is the natural inclusion map and $\pi:L(p, -1)\times S^1\rightarrow L(p, -1)$ is the projection.
\end{remark}

Finally, we investigate the relationship among the topological conjugacy, algebraic conjugacy and smooth conjugacy of the left actions on ${\rm U}(2)$.

\begin{proposition}\label{prop:5}
There exist some left actions $L_g,\,L_{g'}$ on ${\rm U}(2)$ such that $L_g$ and $L_{g'}$ are topologically conjugate,
but not algebraically conjugate.
\end{proposition}

\begin{proof}
Take two left actions $L_g,\,L_{g'} \in \mathfrak{M}_{T_{{\rm U}(2)}}$ with
$$
\rho(L_g)=\left(\arraycolsep=1pt\begin{array}{c}
\theta\\
\varphi
\end{array}\right),\qquad \rho(L_{g'})=\left(\arraycolsep=1pt\begin{array}{c}
\theta'\\
\varphi'
\end{array}\right),
$$
and
$$
\left\{
\begin{array}{l}
\theta'=\theta+\varphi,
\\[0.25 cm]
\varphi'=\varphi,
\end{array}
\right.
$$
where $\theta,\,\varphi$ are two rational independent irrational numbers.
Then together with Theorem \ref{the:2}, one can see that $\theta',\,\varphi'$ are also two rational independent irrational numbers,
and $L_g$ and $L_{g'}$ are topologically conjugate.
Suppose that $L_g$ and $L_{g'}$ are algebraically conjugate, and the isomorphism $\Phi:\,{\rm U}(2) \rightarrow {\rm U}(2)$ is a algebraic conjugacy from $L_g$ to $L_{g'}$, i.e.,
$$
\Phi \circ L_g=L_{g'} \circ \Phi,\qquad \Phi(e)=e,
$$
where $e$ is the identity element of ${\rm U}(2)$.
It is well-known that for any element $u=\left(\arraycolsep=1pt\begin{array}{cc}
\lambda z_1&-\lambda \bar{z_2}\\
z_2&\bar{z_1}
\end{array}\right) \in {\rm U}(2)$,
where $z_1,\,z_2,\,\lambda \in \mathbb{C}$, $|z_1|^2+|z_2|^2=1$, $|\lambda|=1$,
the determinant of $u$ is $\lambda$.
This fact indicates that we can naturally define a group homomorphism ${\rm det}:\,{\rm U}(2) \rightarrow S^1$ by
$$
{\rm det}:\,u \mapsto \lambda,\qquad \forall\,u=\left(\arraycolsep=1pt\begin{array}{cc}
\lambda z_1&-\lambda \bar{z_2}\\
z_2&\bar{z_1}
\end{array}\right) \in {\rm U}(2).
$$
By the known conditions, we have $\Phi(\overline{Orb_{L_g}(e)})=\overline{Orb_{L_{g'}}(e)}$, and
$$
\overline{Orb_{L_g}(e)}=\overline{Orb_{L_{g'}}(e)}=T_{{\rm U}(2)}\,\cong\,T^2.
$$
Thus, $\Phi|_{T_{{\rm U}(2)}}$ is an automorphism of $T_{{\rm U}(2)}$ equivalent to an automorphism of $T^2$.
Together with Proposition \ref{prop:2}, the definition of the rotation vectors of the left actions in $\mathfrak{M}_{T_{{\rm U}(2)}}$ and the above relationship between $\rho(L_g)$ and $\rho(L_{g'})$,
one can see that ${\bm A}=\left(\arraycolsep=3pt\begin{array}{cc}
1&1\\
0&1
\end{array}\right)\in{\rm GL}_2(\mathbb{Z})$ is just the matrix form of the group homomorphism $\Phi_*$ on $\pi_1(T_{{\rm U}(2)})\,\cong\,\pi(T^2)$ induced by $\Phi$.
And then according to the well-known properties of the automorphisms of $\pi(T^2)$, it is easy to see that $\Phi$ satisfies that
$$
\Phi(t)=\left(\arraycolsep=3pt\begin{array}{cc}
\lambda^2z&0\\
0&\bar{\lambda} \bar{z}
\end{array}\right),\qquad \forall\,t=\left(\arraycolsep=3pt\begin{array}{cc}
\lambda z&0\\
0&\bar{z}
\end{array}\right)\in T_{{\rm U}(2)}.
$$
This fact implies that for any $t \in T_{{\rm U}(2)}$, ${\rm det}(\Phi(t))={\rm det}(t)$.
And it follows from Proposition \ref{prop:3} that for any $g \in {\rm U}(2)$, there exist some $t \in T_{{\rm U}(2)}$ and $s \in {\rm U}(2)$ such that $sg=ts$.
Then we have
$$
{\rm det}(s)\,{\rm det}(g)={\rm det}(sg)={\rm det}(ts)={\rm det}(t)\,{\rm det}(s),
$$
so ${\rm det}(g)={\rm det}(t)$.
On the other hand,
$$
\Phi(s)\,\Phi(g)=\Phi(sg)=\Phi(ts)=\Phi(t)\,\Phi(s),
$$
and similarly, one can see that ${\rm det}(\Phi(g))={\rm det}(\Phi(t))$.
Consequently, we obtain
$$
{\rm det}(\Phi(g))={\rm det}(\Phi(t))={\rm det}(t)={\rm det}(g),\qquad \forall\,g \in {\rm U}(2),
$$
that means $\Phi$ is an automorphism of ${\rm U}(2)$ preserving the determinants.
Regard ${\rm SU}(2)$ as a subgroup of ${\rm U}(2)$,
and it is easy to know that $\Phi|_{{\rm SU}(2)}$ is a automorphism of ${\rm SU}(2)$.
As well-known, each automorphism of ${\rm SU}(2)$ is an inner automorphism,
so $\Phi|_{{\rm SU}(2)}$ is an inner automorphism of ${\rm SU}(2)$ defined by
$$
\Phi|_{{\rm SU}(2)}:\,w \mapsto vwv^{-1},\qquad \forall\,w \in {\rm SU}(2),
$$
where $v \in {\rm SU}(2)$.
Since it is known that
$$
\Phi(t')=t',\qquad \forall\,t'=\left(\arraycolsep=3pt\begin{array}{cc}
z&0\\
0&\bar{z}
\end{array}\right) \in T_{{\rm SU}(2)},
$$
we have
$$
\left(\arraycolsep=2pt\begin{array}{cc}
z_0&-\bar{z_0'}\\
z_0'&\bar{z_0}
\end{array}\right)\left(\arraycolsep=2pt\begin{array}{cc}
z&0\\
0&\bar{z}
\end{array}\right)\left(\arraycolsep=2pt\begin{array}{cc}
\bar{z_0}&\bar{z_0'}\\
-z_0'&z_0
\end{array}\right)=vt'v^{-1}=t'=\left(\arraycolsep=2pt\begin{array}{cc}
z&0\\
0&\bar{z}
\end{array}\right),
$$
where $v=\left(\arraycolsep=2pt\begin{array}{cc}
z_0&-\bar{z_0'}\\
z_0'&\bar{z_0}
\end{array}\right)\in{\rm SU}(2),\,t'=\left(\arraycolsep=2pt\begin{array}{cc}
z&0\\
0&\bar{z}
\end{array}\right) \in T_{{\rm SU}(2)}$.
Then it follows from a simple calculation that $z_0'=0$.
This fact indicates that the element $v$ which induces the inner automorphism $\Phi|_{{\rm SU}(2)}$ must be a diagonal matrix like $\left(\arraycolsep=3pt\begin{array}{cc}
z_0&0\\
0&\bar{z_0}
\end{array}\right)$, where $z_0 \in \mathbb{C}$ and $|z_0|=1$.
Next, we denote the elements of $S^1$ by $2 \times 2$ matrixes like $\left(\begin{array}{cc}
\gamma&0\\
0&1
\end{array}\right)$, where $\gamma \in \mathbb{C}$ and $|\gamma|=1$,
and then for an element $u=\left(\arraycolsep=2pt\begin{array}{cc}
\lambda z_1&-\lambda\bar{z_2}\\
z_2&\bar{z_1}
\end{array}\right) \in {\rm U}(2)$, we have
$$
u=u_1u_2=\left(\arraycolsep=2pt\begin{array}{cc}
\lambda&0\\
0&1
\end{array}\right)\left(\arraycolsep=2pt\begin{array}{cc}
z_1&-\bar{z_2}\\
z_2&\bar{z_1}
\end{array}\right),
$$
where $u_1=\left(\arraycolsep=2pt\begin{array}{cc}
\lambda&0\\
0&1
\end{array}\right) \in S^1 \subseteq T_{{\rm U}(2)}$ and $\left(\arraycolsep=2pt\begin{array}{cc}
z_1&-\bar{z_2}\\
z_2&\bar{z_1}
\end{array}\right) \in {\rm SU}(2)$.
Thus, we get
$$
\Phi(u)=\Phi(u_1u_2)=\Phi(u_1)\Phi(u_2)=\left(\arraycolsep=2pt\begin{array}{cc}
\lambda^2&0\\
0&\bar{\lambda}
\end{array}\right)\left(\arraycolsep=2pt\begin{array}{cc}
z_0&0\\
0&\bar{z_0}
\end{array}\right)\left(\arraycolsep=2pt\begin{array}{cc}
z_1&-\bar{z_2}\\
z_2&\bar{z_1}
\end{array}\right)\left(\arraycolsep=2pt\begin{array}{cc}
\bar{z_0}&0\\
0&z_0
\end{array}\right).
$$
And for another element
$$
u'=u_1'u_2'=\left(\arraycolsep=2pt\begin{array}{cc}
\lambda'&0\\
0&1
\end{array}\right)\left(\arraycolsep=2pt\begin{array}{cc}
z_1'&-\bar{z_2'}\\
z_2'&\bar{z_1'}
\end{array}\right) \in {\rm SU}(2),
$$
where $u_1'=\left(\arraycolsep=2pt\begin{array}{cc}
\lambda'&0\\
0&1
\end{array}\right) \in T_{{\rm U}(2)}$ and $\left(\arraycolsep=2pt\begin{array}{cc}
z_1'&-\bar{z_2'}\\
z_2'&\bar{z_1'}
\end{array}\right)$,
similarly, we have
$$
\Phi(u')=\Phi(u_1'u_2')=\Phi(u_1')\Phi(u_2')=\left(\arraycolsep=2pt\begin{array}{cc}
\lambda'^2&0\\
0&\bar{\lambda'}
\end{array}\right)\left(\arraycolsep=2pt\begin{array}{cc}
z_0&0\\
0&\bar{z_0}
\end{array}\right)\left(\arraycolsep=2pt\begin{array}{cc}
z_1'&-\bar{z_2'}\\
z_2'&\bar{z_1'}
\end{array}\right)\left(\arraycolsep=2pt\begin{array}{cc}
\bar{z_0}&0\\
0&z_0
\end{array}\right).
$$
Assume that $z_1'=\lambda=1,\,z_2 \neq 0$, and $\lambda'$ satisfies that $\bar{\lambda'}\neq\lambda'$.
Then we have $z_2 \neq 0,\,z_2'=0$, and $\Phi|_{{\rm SU}(2)}$ is the identity map of ${\rm SU}(2)$.
Thus, it follows from a series of calculations that
$$
\Phi(uu')=\Phi(u_3u_3')=\Phi(u_3)\,\Phi(u_3')=\left(\arraycolsep=2pt\begin{array}{cc}
\lambda'^2z_1&-\lambda'\bar{z_2}z_0^2\\
z_2\bar{z_0}^2&\bar{\lambda'}\bar{z_1'}
\end{array}\right),
\qquad \Phi(u)\,\Phi(u')=\left(\arraycolsep=2pt\begin{array}{cc}
\lambda'^2z_1&-\bar{\lambda'}\bar{z_2}z_0^2\\
\lambda'^2z_2\bar{z_0}^2&\bar{\lambda'}\bar{z_1'}
\end{array}\right),
$$
where $u_3=\left(\arraycolsep=2pt\begin{array}{cc}
\lambda'&0\\
0&1
\end{array}\right) \in S^1$ and $u_3'=\left(\arraycolsep=2pt\begin{array}{cc}
z_1&-\bar{\lambda'}\bar{z_2}\\
\lambda'z_2&\bar{z_1'}
\end{array}\right) \in {\rm SU}(2)$.
Since $z_2,\,z_0\neq 0$ and $\bar{\lambda'}\neq\lambda'$,
one can see that
$$
-\lambda'\bar{z_2}z_0^2 \neq -\bar{\lambda'}\bar{z_2}z_0^2,\qquad z_2\bar{z_0}^2 \neq \lambda'^2z_2\bar{z_0}^2,
$$
that means
$$
\Phi(uu') \neq \Phi(u)\,\Phi(u').
$$
This fact is in contradiction to $\Phi$ being a isomorphism,
so the left actions $L_g$ and $L_{g'}$ can not be algebraic conjugate if $\rho(L_g)$ and $\rho(L_{g'})$ satisfy the conditions at the beginning of this proof.
\end{proof}

In fact, according to Proposition \ref{prop:5}, we see that the topologically conjugate classification of the left actions on ${\rm U}(2)$ is not equivalent to their algebraically conjugate classification.

\begin{proposition}\label{prop:6}
For any left actions $L_g,\,L_{g'}$ on ${\rm U}(2)$,
$L_g$ and $L_{g'}$ are topologically conjugate if and only if $L_g$ and $L_{g'}$ are smooth conjugate.
\end{proposition}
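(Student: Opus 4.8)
The implication ``smooth conjugate $\Longrightarrow$ topologically conjugate'' is trivial, since a diffeomorphism is in particular a homeomorphism. So the content is the converse, and the plan is to trace through the proofs of Proposition~\ref{prop:3}, Corollary~\ref{GT} and Theorem~\ref{the:2} and observe that \emph{every} conjugating homeomorphism occurring there is in fact a diffeomorphism; granting this, any topological conjugacy between two left actions on ${\rm U}(2)$ can be replaced by a smooth one.

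First I would reduce to the maximal torus. Suppose $L_g$ and $L_{g'}$ are topologically conjugate. By Proposition~\ref{prop:3} there are $s,s'\in{\rm U}(2)$ and $t,t'\in T_{{\rm U}(2)}$ with $L_s\circ L_g=L_t\circ L_s$ and $L_{s'}\circ L_{g'}=L_{t'}\circ L_{s'}$; exactly as in the proof of Proposition~\ref{prop:4}, the inner automorphisms $u\mapsto sus^{-1}$ and $u\mapsto s'us'^{-1}$ are \emph{smooth} self-homeomorphisms of ${\rm U}(2)$ which conjugate $L_g$ to $L_t$ and $L_{g'}$ to $L_{t'}$, respectively. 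Since compositions and inverses of diffeomorphisms are diffeomorphisms, it therefore suffices to show that two torus left actions $L_t,L_{t'}\in\mathscr{M}_{T_{{\rm U}(2)}}$ which are topologically conjugate are smoothly conjugate.

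Next I would invoke Theorem~\ref{the:2}: topological conjugacy of $L_t$ and $L_{t'}$ forces $\theta'=\pm\,\theta+n\varphi+n'$ and $\varphi'=\pm\,\varphi\ (\mathrm{mod}\ \mathbb{Z})$, and in each of the four resulting cases the sufficiency argument of Theorem~\ref{the:2} produces an explicit conjugacy $h:{\rm U}(2)\to{\rm U}(2)$, together with an explicit $h^{-1}$ of the same shape, of the form $h(u)=\mathrm{diag}(\lambda^{j},\bar\lambda^{k})\,\sigma(u)$ with $j,k\in\mathbb{Z}$, where $\sigma(u)$ is either $u$ itself or $\left(\begin{smallmatrix}\lambda\bar z_1&-\lambda z_2\\ \bar z_2& z_1\end{smallmatrix}\right)$, in the notation $u=\left(\begin{smallmatrix}\lambda z_1&-\lambda\bar z_2\\ z_2&\bar z_1\end{smallmatrix}\right)$ of Theorem~\ref{the:2}. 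The point I would stress is that here $\lambda=\det u$, while $z_2=u_{21}$ and $z_1=\overline{u_{22}}$; hence $\lambda$, $\bar\lambda=\overline{\det u}$ and $z_1,z_2,\bar z_1,\bar z_2$ are all smooth (indeed real-polynomial) functions of the real and imaginary parts of the entries of $u$. Consequently each such $h$ is a smooth map with smooth inverse, i.e.\ a diffeomorphism, so $L_t$ and $L_{t'}$ are smoothly conjugate; composing with the two inner automorphisms from the reduction step then yields a smooth conjugacy between $L_g$ and $L_{g'}$.

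I do not expect a real obstacle. The only step requiring care is the verification that the maps built in Theorem~\ref{the:2} are genuinely smooth maps of the manifold ${\rm U}(2)$ (not merely homeomorphisms), and this is exactly the identity $\lambda=\det u$, which simultaneously makes the continuity claim rigorous and upgrades it to smoothness. Everything else --- the reduction to torus elements and the closure of diffeomorphisms under composition and inversion --- is routine and formally identical to the argument in Proposition~\ref{prop:4}.
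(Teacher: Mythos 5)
Your proof is correct and follows essentially the same route as the paper: reduce to torus elements via the smooth inner automorphisms from Proposition~\ref{prop:3}/\ref{prop:4}, then observe that the explicit conjugacies constructed in the sufficiency part of Theorem~\ref{the:2} are diffeomorphisms. The paper merely asserts that those conjugacies are ``smooth homeomorphisms,'' whereas you justify it via the identity $\lambda=\det u$; this is a welcome clarification but not a different argument.
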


\begin{proof}
The sufficiency is obviously true.

If $L_g$ and $L_{g'}$ are topologically conjugate, then it follows from Proposition \ref{prop:3} that there exist some $t,\,t' \in T_{{\rm U}(2)}$ such that $L_g$ and $L_t$ are topologically conjugate, $L_{g'}$ and $L_{t'}$ are topologically conjugate,
and hence $L_t$ and $L_{t'}$ are topologically conjugate.
Assume that
$$
\rho(L_t)=\left(\arraycolsep=1pt\begin{array}{c}
\theta\\
\varphi
\end{array}\right),\qquad \rho(L_{t'})=\left(\arraycolsep=1pt\begin{array}{c}
\theta'\\
\varphi'
\end{array}\right).
$$
Then Theorem \ref{the:2} indicates that
$$
\left\{
\begin{array}{l}
\theta'=\pm\,\theta+n\varphi+n',\qquad n,\,n'\in\mathbb{Z},
\\[0.25 cm]
\varphi'=\pm\,\varphi\quad(\rm{mod}\,\,\,\mathbb{Z}).
\end{array}
\right.
$$
Therefore, we use the same way as the proof of the sufficiency of Theorem \ref{the:2} to construct the topological conjugacies from $L_t$ to $L_{t'}$.
It is easy to see that these topological conjugacies are all smooth homeomorphisms, so $L_t$ and $L_{t'}$ are smooth conjugate.
Since each left action on ${\rm U}(2)$ is a smooth self-homeomorphism,
it follows from Proposition \ref{prop:3} that $L_g$ and $L_t$ are smooth conjugate, $L_{g'}$ and $L_{t'}$ are smooth conjugate,
and then $L_g$ and $L_{g'}$ are smooth conjugate.
\end{proof}

Proposition \ref{prop:6} implies that the topologically conjugate classification of the left actions on ${\rm U}(2)$ is equivalent to their smooth conjugate classification.

\section{Topologically conjugate classifications of the left actions on the quotient groups of ${\rm SU}(2)$ and ${\rm U}(2)$}

In this section, we give the topologically conjugate classifications of the left actions on the quotient groups of ${\rm SU}(2)$ and ${\rm U}(2)$, and then study the relationship among their topological conjugacy, algebraic conjugacy and smooth conjugacy.

Firstly, set
$$
G=\{e_1,\,-e_1\},\qquad G'=\{(e_1, e_2),\,(-e_1, -e_2)\},
$$
where $e_1,\,e_2$ are the identity elements of ${\rm SU}(2)$ and $S^1$, respectively.
One can see that $G \cong \mathbb{Z}_2$ is the only non-trivial normal subgroup of ${\rm SU}(2)$, and
$$
G_1 \cong \mathbb{Z}_q,\quad G_2 \cong G \oplus \mathbb{Z}_p \cong \mathbb{Z}_2 \oplus \mathbb{Z}_p,
\quad G_3=G' \cong \mathbb{Z}_2
$$
are the non-trivial normal subgroups of ${\rm U}(2)$,
where $q \geq 2$, and $\mathbb{Z}_2$ is not a non-trivial direct summand of $\mathbb{Z}_q$.
Thus, there is only one non-trivial quotient group of  ${\rm SU}(2)$, i.e.,
$$
{\rm SU}(2)/\mathbb{Z}_2 \cong {\rm SO}(3),
$$
and there are three quotient groups of ${\rm U}(2)$ including
\begin{gather*}
H_1 \cong {{\rm U}}(2)/G_1 \cong {\rm SU}(2) \times S^1/\mathbb{Z}_q \cong {\rm U}(2),
\\
H_2 \cong {{\rm U}}(2)/G_2 \cong {{\rm SU}}(2)/\mathbb{Z}_2
\oplus S^1/\mathbb{Z}_p \cong {\rm SO}(3) \times S^1,
\\
H_3 \cong {{\rm U}}(2)/G_3 \cong {\rm Spin}^{\mathbb{C}}(3).
\end{gather*}
Notice that the case $H_1 \cong {\rm U}(2)$ has been discussed in Section 4, and then we divide this section into three parts to study the quotient groups ${\rm SO}(3)$, ${\rm SO}(3) \times S^1$ and ${\rm Spin}^{\mathbb{C}}(3)$, respectively.

\begin{remark}
Assume that $G={\rm SU}(2)$ or ${\rm U}(2)$, $G'$ is a subgroup of them,
and $\pi:\,G \rightarrow G'$ is the quotient map.
Naturally, $\pi$ is both a covering map and a group homomorphism.
Then we have
$$
\pi(g_1g_2)=\pi(g_1)\,\pi(g_2),\qquad \forall\,g_1,\,g_2 \in G.
$$
Thus, for any left action $L_{g_\pi}$ on $G'$,
there exits some left action $L_g$ on $G$ satisfying $\pi(g)=g_\pi$ such that
$$
\pi(gg')=\pi(g)\,\pi(g')=g_\pi\,\pi(g'), \qquad \forall\,g' \in G,
$$
i.e.,
$$
\pi \circ L_g(g')=L_{g_\pi} \circ \pi(g'), \qquad \forall\,g' \in G,
$$
that means $L_g$ is a lift of $L_{g_\pi}$ under the covering map $\pi$.
This fact indicates that the left actions on $G'$ can always be lifted under the covering map $\pi$.
\end{remark}

\subsection{Topologically conjugate classification of the left actions on ${\rm SO}(3)$}
\label{sec:5.1}

In this part, we define the rotation numbers of the left actions in the set
$$
\mathfrak{M}_{T_{{\rm SO}(3)}}=\{L_g:\,{\rm SO}(3)\rightarrow {\rm SO}(3);\,\, g \in T_{{\rm SO}(3)}\},
$$
and utilize the rotation numbers defined to give the topologically conjugate classification of the left actions in $\mathfrak{M}_{T_{{\rm SO}(3)}}$,
and then give the topologically conjugate
classification of all left actions on ${\rm SO}(3)$.
Furthermore, for the left actions on ${\rm SO}(3)$,
we study the relationship among their topological conjugacy, algebraic conjugacy and smooth conjugacy.

Regard $S^3$ as the unit sphere in quaternion space denoted by
$$
S^3=\{a+b{\rm i}+c{\rm j}+d{\rm k};\,\,a,\,b,\,c,\,d \in \mathbb{R},\,\,|a|^2+|b|^2+|c|^2+|d|^2=1\},
$$
Notice that $S^3 \cong {\rm SU}(2)$, and then define a homeomorphism $h:\,S^3 \rightarrow {\rm SU}(2)$ by
$$
h:\,(a+b{\rm i}+c{\rm j}+d{\rm k})\mapsto\left(\begin{array}{cc}
a+d{\rm i}&-b+c{\rm i}\\
b+c{\rm i}&a-d{\rm i}
\end{array}\right),
\quad \forall\, a+b{\rm i}+c{\rm j}+d{\rm k} \in S^3.
$$
For any point $p=(x, y, z) \in \mathbb{R}^3$, we associate a quaternion $x{\rm i}+y{\rm j}+d{\rm k}$ which is also called $p$.
Then every quaternion $r$ corresponds to a rotation $R_r$ of $\mathbb{R}^3$ defined by
$$
R_r:\,p\mapsto rpr^{-1},\qquad \forall\, p \in \mathbb{R}^3,
$$
where $r^{-1}=\dfrac{\bar{r}}{\|r\|^2}$.
If $r=a+b{\rm i}+c{\rm j}+d{\rm k} \in S^3$, then it follows from a simple calculation that $R_r$ can be denoted by a $3 \times 3$ matrix, i.e.,
$$
R_r=\left(\begin{array}{ccc}
a^2+b^2-c^2-d^2&2bc-2ad&2ac+2bd\\
2bc+2ad&a^2-b^2+c^2-d^2&2cd-2ab\\
2bd-2ac&2ab+2cd&a^2-b^2-c^2+d^2
\end{array}\right).
$$
It is easy to prove that $R_r \in {\rm SO}(3)$.
Therefore, define $f:\,S^3 \rightarrow {\rm SO}(3)$ by
$$
f:\,r\mapsto R_r,\qquad \forall\, r=a+b{\rm i}+c{\rm j}+d{\rm k} \in S^3,
$$
and then there exists some map $\pi:\,{\rm SU}(2) \rightarrow {\rm SO}(3)$ such that
$$
\pi \circ h=f.
$$
One can see that $\pi$ is a $2$-fold covering map from ${\rm SU}(2)$ to ${\rm SO}(3)$ defined by
$$
\pi:\,u_r \mapsto R_r,
\qquad \forall\, u_r=\left(\begin{array}{cc}
a+d{\rm i}&-b+c{\rm i}\\
b+c{\rm i}&a-d{\rm i}
\end{array}\right) \in {\rm SU}(2).
$$
Assume that $T_{{\rm SU}(2)}$ is the maximal torus of ${\rm SU}(2)$ chosen in Section \ref{sec:3}.
Then fix one maximal torus $T_{{\rm SO}(3)}$ of ${\rm SO}(3)$
such that $T_{{\rm SO}(3)}$ is just the image of $T_{{\rm SU}(2)}$ under the covering map $\pi$.
According to the definition of $\pi:\,{\rm SU}(2) \rightarrow {\rm SO}(3)$,
we see that every element $u$ of $T_{{\rm SO}(3)}$ can be denoted by
$$
u=\left(\begin{array}{ccc}
\cos2\pi\theta&-\sin2\pi\theta&0\\
\sin2\pi\theta&\cos2\pi\theta&0\\
0&0&1
\end{array}\right),\qquad\theta \in [0, 1).
$$
Obviously,
$$
T_{{\rm SO}(3)}\,\cong\,S^1.
$$
Define an isomorphism $\Phi:\,T_{{\rm SO}(3)} \rightarrow S^1$ by
$$
\Phi:\,u\mapsto\textrm{e}^{2\pi{\rm i}\theta},
\qquad \forall\, u=\left(\begin{array}{ccc}
\cos2\pi\theta&-\sin2\pi\theta&0\\
\sin2\pi\theta&\cos2\pi\theta&0\\
0&0&1
\end{array}\right) \in T_{{\rm SO}(3)}.
$$
Then for any $L_g \in \mathfrak{M}_{T_{{\rm SO}(3)}}$,
set
$$
f=\Phi \circ L_g|_{T_{{\rm SO}(3)}} \circ \Phi^{-1}.
$$
One can see that $f$ is a rotation of $S^1$ satisfying
$$
f(z)={\textrm{e}}^{2\pi{{\rm i}}\theta}z,\qquad\forall\, z \in S^1,
$$
where $\theta \in [0, 1)$.
This fact indicates that $L_g|_{T_{{\rm SO}(3)}}$ is topologically conjugate to some rotation $f$ of $S^1$.
Therefore, according to Definition \ref{def:3}, we define the rotation number of the left action $L_g$ under the representation $(T_{{\rm SO}(3)}, \Phi)$  by
$$
\rho(L_g)\triangleq\rho(f)=\theta,
\qquad \theta \in [0,1).
$$

\begin{theorem}\label{the:3}
For the left actions $L_g,\,L_{g'} \in \mathfrak{M}_{T_{{\rm SO}(3)}}$,
$L_g$ and $L_{g'}$ are topologically conjugate if and only if
$$
\rho(L_{g'})=\pm\,\rho(L_g)\quad(\rm{mod}\,\,\,\mathbb{Z}).
$$
\end{theorem}

\begin{proof}
First, we prove the sufficiency.

If $\rho(L_{g'})=\rho(L_g)$, we have $L_g=L_{g'}$.
Hence the identity map of ${\rm SO}(3)$ is a topological conjugacy from $L_g$ to $L_{g'}$.
If $\rho(L_{g'})=1-\rho(L_g)$, set
$$
g=\left(\begin{array}{ccc}
\cos2\pi\theta&-\sin2\pi\theta&0\\
\sin2\pi\theta&\cos2\pi\theta&0\\
0&0&1
\end{array}\right),
\quad g'=\left(\begin{array}{ccc}
\cos2\pi\theta&\sin2\pi\theta&0\\
-\sin2\pi\theta&\cos2\pi\theta&0\\
0&0&1
\end{array}\right),
$$
and define a homeomorphism $h:\,{\rm SO}(3) \rightarrow {\rm SO}(3)$ by
$$
h:\,u\mapsto\left(\begin{array}{ccc}
0&1&0\\
1&0&0\\
0&0&-1
\end{array}\right)u\left(\begin{array}{ccc}
0&1&0\\
1&0&0\\
0&0&-1
\end{array}\right),\qquad \forall\, u \in {\rm SO}(3).
$$
Then for any $u \in {\rm SO}(3)$, we obtain
$$
h \circ L_g(u)=\left(\begin{array}{ccc}
\sin2\pi\theta&\cos2\pi\theta&0\\
\cos2\pi\theta&-\sin2\pi\theta&0\\
0&0&-1
\end{array}\right)u\left(\begin{array}{ccc}
0&1&0\\
1&0&0\\
0&0&-1
\end{array}\right)=L_{g'} \circ h(u),
$$
so $L_g$ and $L_{g'}$ are topologically conjugate.

Next, we prove the necessity.

One can see that ${\rm SU}(2)$ is a 2-fold covering space of ${\rm SO}(3)$, then there exist two lifts of every left action on ${\rm SO}(3)$, and both of them are in the set $\mathfrak{M}_{T_{{\rm SU}(2)}}$.
Set
$$
g=\left(\begin{array}{ccc}
\cos2\pi\theta&-\sin2\pi\theta&0\\
\sin2\pi\theta&\cos2\pi\theta&0\\
0&0&1
\end{array}\right) \in T_{{\rm SO}(3)}.
$$
Then $\rho(L_g)=\theta$, and it follows from the definition of the covering map $\pi$ that the two lifts of $L_g$ denoted by ${\tilde L}_g$ and ${\tilde L}_g'$ satisfy
$$
\rho({\tilde L}_g)=\dfrac{\theta}{2},\qquad \rho({\tilde L}_g')=\dfrac{\theta}{2}+\dfrac{1}{2}.
$$
Assume that $L_g,\,L_{g'} \in \mathfrak{M}_{T_{{\rm SO}(3)}}$ satisfy
$$
\rho(L_g)=\theta,\qquad \rho(L_{g'})=\theta',
$$
and the homeomorphism $h:\,{\rm SO}(3) \rightarrow {\rm SO}(3)$ is a  topological conjugacy from $L_g$ to $L_{g'}$.
As well-known, $\pi_1({\rm SU}(2))$ is trivial, and then $\pi$ is the universal covering map from ${\rm SU}(2)$ to ${\rm SO}(3)$.
Hence according to the map lifting theorem, we know that the self-homeomorphisms of ${\rm SO}(3)$ can always be lifted under the universal covering map $\pi$.
Then Lemma \ref{lem:5} implies that if we fix a lift of $L_g$ denoted by ${\tilde L}_g$ with $\rho({\tilde L}_g)=\dfrac{\theta}{2}$ and a lift of $h$,
there exists some certain lift of $L_{g'}$ denoted by ${\tilde L}_{g'}$ satisfying
$$
\rho({\tilde L}_{g'})=\dfrac{\theta'}{2}\quad\hbox{or}\quad\dfrac{\theta'}{2}+\dfrac{1}{2},
$$
such that ${\tilde L}_g$ and ${\tilde L}_{g'}$ are topologically conjugate.
And then it follows from Theorem \ref{the:1} that
$$
\dfrac{\theta'}{2}=\dfrac{\theta}{2}
\quad\hbox{or}\quad\dfrac{\theta'}{2}+\dfrac{1}{2}=1-\dfrac{\theta}{2}.
$$
Therefore,
$$
\theta'=\pm\,\theta\quad({\rm{mod}}\,\,\,\mathbb{Z}),
\quad\hbox{i.e.}\quad\rho(L_{g'})=\pm\,\rho(L_g)\quad({\rm{mod}}\,\,\,\mathbb{Z}).
$$
\end{proof}

Finally, together with the proofs of Corollary \ref{prop:4} and Theorem \ref{the:3},
we can give the relationship among the topological conjugacy, algebraic conjugacy and smooth conjugacy of the left actions on ${\rm SO}(3)$ by the following corollary.

\begin{corollary}\label{prop:7}
Suppose that $L_g,\,L_{g'}$ are the left actions on ${\rm SO}(3)$.
Then the following conditions are equivalent.

(a) $L_g$ and $L_{g'}$ are topologically conjugate;

(b) $L_g$ and $L_{g'}$ are algebraically conjugate;

(c) $L_g$ and $L_{g'}$ are smooth conjugate.
\end{corollary}

\subsection{Topologically conjugate Classification of the left actions on ${\rm SO}(3) \times S^1$}

In this part, we define the rotation vectors of the left actions in the set
$$
\mathfrak{M}_{T_{{\rm SO}(3) \times S^1}}=\{L_g:\,{\rm SO}(3) \times S^1\rightarrow {\rm SO}(3) \times S^1;\,\, g \in T_{{\rm SO}(3) \times S^1}\},
$$
and utilize the rotation vectors defined to give the topologically conjugate classification of the left actions in $\mathfrak{M}_{T_{{\rm SO}(3) \times S^1}}$,
and then give the topologically conjugate
classification of all left actions on ${\rm SO}(3) \times S^1$.
Furthermore, for the left actions on ${\rm SO}(3) \times S^1$,
we study the relationship among their topological conjugacy, algebraic conjugacy and smooth conjugacy.

Assume that
$$
H_2={{\rm U}}(2)/G_2' \cong {{\rm SU}}(2)/\mathbb{Z}_2 \times S^1/\mathbb{Z}_2\cong{{\rm SO}}(3)\times S^1,
$$
where
$$
G_2'=\{(e_1, e_2),\,\,\,(e_1, -e_2),\,\,\,(-e_1, e_2),\,\,\,(-e_1, -e_2)\} \cong \mathbb{Z}_2 \oplus \mathbb{Z}_2,
$$
and $e_1,\,e_2$ are the identity elements of ${{\rm SU}}(2)$ and $S^1$, respectively.
From this view, we see that ${\rm U}(2) \cong {\rm SU}(2) \times S^1$ is a 4-fold covering space of ${\rm SO}(3) \times S^1$, and then there exist four lifts of every left action on ${\rm SO}(3) \times S^1$, and all of them are in the set $\mathfrak{M}_{T_{{\rm U}(2)}}$.
For any self-homeomorphism $h$ of ${\rm SO}(3) \times S^1$, it is easy to see that
$$
\pi_*(h_*(\pi_1({\rm U}(2))))=\pi_*(\pi_1({\rm U}(2))),
$$
where $\pi$ is the $4$-fold covering map from ${\rm U}(2)$ to ${\rm SO}(3) \times S^1$,
and $h_*,\,\pi_*$ are the group homomorphisms on fundamental group induced by $h$ and $\pi$, respectively.
Then according to the map lifting theorem, we know that $h$ can be lifted under $\pi$.

Let $T_{{\rm U}(2)}$ be the maximal torus of ${\rm U}(2)$ chosen in Section \ref{sec:4}.
Then fix one maximal torus $T_{{\rm SO}(3) \times S^1}$ of ${\rm SO}(3) \times S^1$
such that $T_{{\rm SO}(3) \times S^1}$ is just the image of $T_{{\rm U}(2)}$ under the covering map $\pi$.
Notice that $T_{{\rm SO}(3) \times S^1} \cong T_{{\rm SO}(3)} \times S^1$,
and then according to the properties of the covering map $\pi:\,{\rm U}(2)
\rightarrow {\rm SO}(3) \times S^1$,
one can see that every element of $T_{{\rm SO}(3) \times S^1}$ can be denoted by $(u,\,\lambda)$,
where
$$
u=\left(\begin{array}{ccc}
\cos2\pi\theta&-\sin2\pi\theta&0\\
\sin2\pi\theta&\cos2\pi\theta&0\\
0&0&1
\end{array}\right) \in T_{{\rm SO}(3)},
\,\,\, \lambda=\textrm{e}^{2\pi{\rm i}\varphi} \in S^1,
\quad \theta,\,\varphi \in [0, 1).
$$
Obviously,
$$
T_{{\rm SO}(3) \times S^1} \cong T_{{\rm SO}(3)} \times S^1 \cong T^2.
$$
Define an isomorphism $\Phi:\,T_{{\rm SO}(3) \times S^1} \rightarrow T^2$ by
$$
\Phi:\,(u,\,\lambda)\mapsto\left(\begin{array}{cc}
\textrm{e}^{2\pi{\rm i}\theta}&0\\
0&\textrm{e}^{2\pi{\rm i}\varphi}
\end{array}\right),
\quad \forall\, (u,\,\lambda) \in T_{{\rm SO}(3) \times S^1}.
$$
Then for any $L_g \in \mathfrak{M}_{T_{{\rm SO}(3) \times S^1}}$,
set
$$
f=\Phi \circ L_g|_{T_{{\rm SO}(3) \times S^1}} \circ \Phi^{-1}.
$$
One can see that $f$ is a rotation of $T^2$ satisfying
$$
f(t)=\left(\begin{array}{cc}
{\textrm{e}}^{2\pi{\rm i}\theta}&0\\
0&{\textrm{e}}^{2\pi{\rm i}\varphi}
\end{array}\right)t,
\qquad \forall\, t \in T^2,
$$
where $\theta,\,\varphi \in [0, 1)$.
This fact indicates that $L_g|_{T_{{\rm SO}(3) \times S^1}}$ is topologically conjugate to some rotation $f$ of $T^2$.
Thus,  according to Definition \ref{def:3}, we define the rotation vector of the left action $L_g$ under the representation $(T_{{\rm SO}(3) \times S^1}, \Phi)$ by
$$
\rho(L_g)\triangleq\rho(f)=\left(\begin{array}{c}
\theta\\
\varphi
\end{array}\right),
\qquad \theta,\,\varphi \in [0,1).
$$

\begin{theorem}\label{the:4}
For the left actions $L_g,\,L_{g'} \in \mathfrak{M}_{T_{{\rm SO}(3) \times S^1}}$ with
$$
\rho(L_g)=\left(\arraycolsep=1pt\begin{array}{c}
\theta\\
\varphi
\end{array}\right),\qquad \rho(L_{g'})=\left(\arraycolsep=1pt\begin{array}{c}
\theta'\\
\varphi'
\end{array}\right),
$$
$L_g$ and $L_{g'}$ are topologically conjugate if and only if
$$
\left\{
\begin{array}{l}
\theta'=\pm\,\theta+n\varphi+n',\qquad n,\,n'\in\mathbb{Z},
\\
\varphi'=\pm\,\varphi\quad(\rm{mod}\,\,\,\mathbb{Z}).
\end{array}
\right.
$$
\end{theorem}

\begin{proof}
First, we prove the sufficiency.

It is not difficult to verify that for any $L_g \in \mathfrak{M}_{T_{{\rm SO}(3) \times S^1}}$ with $\rho(L_g)=\left(\arraycolsep=1pt\begin{array}{c}
\theta\\
\varphi
\end{array}\right)$, the lifts of $L_g$ denoted by $\rho({\tilde L}_{g_i})$, where $i=1,\,2,\,3,\,4$ satisfy
$$
\rho({\tilde L}_{g_1})=\left(\arraycolsep=1pt\begin{array}{c}
\theta/2\\
\varphi/2
\end{array}\right),
\qquad \rho({\tilde L}_{g_2})=\left(\arraycolsep=1pt\begin{array}{c}
\theta/2+1/2\\
\varphi/2
\end{array}\right),
$$
$$
\rho({\tilde L}_{g_3})=\left(\arraycolsep=1pt\begin{array}{c}
\theta/2\\
\varphi/2+1/2
\end{array}\right),
\quad \rho({\tilde L}_{g_4})=\left(\arraycolsep=1pt\begin{array}{c}
\theta/2+1/2\\
\varphi/2+1/2
\end{array}\right).
$$
Since
$$
\rho(L_g)=\left(\arraycolsep=1pt\begin{array}{c}
\theta\\
\varphi
\end{array}\right),\qquad \rho(L_{g'})=\left(\arraycolsep=1pt\begin{array}{c}
\theta'\\
\varphi'
\end{array}\right),
$$
and
$$
\left\{
\begin{array}{l}
\theta'=\pm\,\theta+n\varphi+n',\qquad n,\,n'\in\mathbb{Z},
\\
\varphi'=\pm\,\varphi\quad(\rm{mod}\,\,\,\mathbb{Z}),
\end{array}
\right.
$$
we have
$$
\left\{
\begin{array}{l}
\theta'/2=\pm\,\theta/2+n\varphi/2+n'/2,\qquad n,\,n'\in\mathbb{Z},
\\
\varphi'/2=\varphi/2\,\,\,\hbox{or}\,\,\, 1/2-\varphi/2.
\end{array}
\right.
$$
Then it follows from Theorem \ref{the:2} that if $n'$ is an even number, $\tilde{L}_{g_1}$ and $\tilde{L}_{g_1'}$ are topologically conjugate, or  $\tilde{L}_{g_1}$ and $\tilde{L}_{g_3'}$ are topologically conjugate,
and hence
$$
\left\{
\begin{array}{l}
\theta'/2=\pm\,\theta/2+n\varphi/2+N,\qquad n,\,N\in\mathbb{Z},
\\[0.25 cm]
\varphi'/2=\varphi/2\quad\hbox{or}\quad\varphi'/2+1/2=1-\varphi/2.
\end{array}
\right.
$$
According to the above relationship between $\rho(\tilde{L}_g)$ and $\rho(\tilde{L}_{g'})$,
we take
$$
\tilde{h}=h_1,\,\,\,h_2,\,\,\,h_3,\,\,\,h_4
$$
defined in the proof of the sufficient of Theorem \ref{the:2} corresponding to the four different cases, respectively.
And through a simple verification, we have
$$
h_i(G_2')=G_2',\qquad i=1,\,2,\,3,\,4,
$$
that means
$$
\tilde{h}(G'_2)=G'_2.
$$
Then Lemma \ref{lem:1} implies that there exists some self-homeomorphism $h$ of ${\rm SO}(3) \times S^1$ induced by $\tilde{h}$ such that
$$
\pi \circ \tilde{h}=h \circ \pi,
$$
where $\pi$ is the $4$-fold covering map from ${\rm U}(2)$ to ${\rm SO}(3) \times S^1$.
Since $\tilde{h}$, ${\tilde L}_g$, and ${\tilde L}_{g'}$ are the lifts of $h$, $L_g$ and $L_{g'}$, respectively, satisfying
$$
\tilde{h} \circ {\tilde L}_g={\tilde L}_{g'} \circ \tilde{h},
$$
it follows from Lemma \ref{lem:5} that $L_g$ and $L_{g'}$ are topologically conjugate.
If $n'$ is an odd number, then $\tilde{L}_{g_1}$ and $\tilde{L}_{g_2'}$ are topologically conjugate,
or  $\tilde{L}_{g_1}$ and $\tilde{L}_{g_4'}$ are topologically conjugate,
and hence
$$
\left\{
\begin{array}{l}
\theta'/2+1/2=\pm\,\theta/2+n\varphi/2+N',\qquad n,\,N'\in\mathbb{Z},
\\[0.25 cm]
\varphi'/2=\varphi/2\quad\hbox{or}\quad\varphi'/2+1/2=1-\varphi/2.
\end{array}
\right.
$$
Using the above observations, one can see that $L_g$ and $L_{g'}$ are topologically conjugate.

Next, we prove the necessity.

Let $L_g,\,L_{g'} \in \mathfrak{M}_{T_{{\rm SO}(3) \times S^1}}$ satisfy
$$
\rho(L_g)=\left(\arraycolsep=1pt\begin{array}{c}
\theta\\
\varphi
\end{array}\right),\qquad \rho(L_{g'})=\left(\arraycolsep=1pt\begin{array}{c}
\theta'\\
\varphi'
\end{array}\right),
$$
and $h$ is a topological conjugacy from $L_g$ to $L_{g'}$.
By Lemma \ref{lem:5}, if we fix one lift of $L_g$ denoted by ${\tilde L}_{g_1}$ with $\rho({\tilde L}_{g_1})=\left(\arraycolsep=1pt\begin{array}{c}
\theta/2\\
\varphi/2
\end{array}\right)$ and one lift of $h$ denoted by $\tilde{h}$,
there exists some lift of $L_{g'}$ denoted by ${\tilde L}_{g'}$ such that
$$
\tilde{h} \circ {\tilde L}_g={\tilde L}_{g'} \circ \tilde{h},
$$
and
$$
\rho({\tilde L}_{g'})=\left(\arraycolsep=1pt\begin{array}{c}
\theta'/2\\
\varphi'/2
\end{array}\right)
\,\hbox{or}\,
\left(\arraycolsep=1pt\begin{array}{c}
\theta'/2+1/2\\
\varphi'/2
\end{array}\right)
\,\hbox{or}\,
\left(\arraycolsep=1pt\begin{array}{c}
\theta'/2\\
\varphi'/2+1/2
\end{array}\right)
\,\hbox{or}\,
\left(\arraycolsep=1pt\begin{array}{c}
\theta'/2+1/2\\
\varphi'/2+1/2
\end{array}\right).
$$
According to Theorem \ref{the:2}, if $\rho({\tilde L}_{g'})=\left(\arraycolsep=1pt\begin{array}{c}
\theta'/2\\
\varphi'/2
\end{array}\right)$ or $\left(\arraycolsep=1pt\begin{array}{c}
\theta'/2\\
\varphi'/2+1/2
\end{array}\right)$, we have
$$
\left\{
\begin{array}{l}
\theta'/2=\pm\,\theta/2+n\varphi/2+N,\quad n,\,N\in\mathbb{Z},
\\
\varphi'/2=\varphi/2\,\,\,\hbox{or}\,\,\, 1/2-\varphi/2,
\end{array}
\right.
$$
and hence
$$
\left\{
\begin{array}{l}
\theta'=\pm\,\theta+n\varphi+2N,\quad n,\,N\in\mathbb{Z},
\\
\varphi'=\pm\,\varphi\quad(\rm{mod}\,\,\,\mathbb{Z}).
\end{array}
\right.
$$
If $\rho({\tilde L}_{g'})=\left(\arraycolsep=1pt\begin{array}{c}
\theta'/2+1/2\\
\varphi'/2
\end{array}\right)$ or $\left(\arraycolsep=1pt\begin{array}{c}
\theta'/2+1/2\\
\varphi'/2+1/2
\end{array}\right)$, we get
$$
\left\{
\begin{array}{l}
\theta'/2+1/2=\pm\,\theta/2+n\varphi/2+N,\quad n,\,N\in\mathbb{Z},
\\
\varphi'/2=\varphi/2\,\,\,\hbox{or}\,\,\, 1/2-\varphi/2,
\end{array}
\right.
$$
and then
$$
\left\{
\begin{array}{l}
\theta'=\pm\,\theta+n\varphi+2N-1,\quad n,\,N\in\mathbb{Z},
\\
\varphi'=\pm\,\varphi\quad(\rm{mod}\,\,\,\mathbb{Z}).
\end{array}
\right.
$$
Therefore, if $L_g$ and $L_{g'}$ are topologically conjugate, one can see that
$$
\left\{
\begin{array}{l}
\theta'=\pm\,\theta+n\varphi+n',\qquad n,\,n'\in\mathbb{Z},
\\
\varphi'=\pm\,\varphi\quad(\rm{mod}\,\,\,\mathbb{Z}).
\end{array}
\right.
$$
\end{proof}

Finally, we investigate the relationship among the topologically conjugay, the algebraic conjugacy and the smooth conjugacy of the left actions on ${\rm SO}(3) \times S^1$.

\begin{proposition}\label{prop:8}
There exist some left actions $L_g,\,L_{g'}$ on ${\rm SO}(3) \times S^1$ such that $L_g$ and $L_{g'}$ are topologically conjugate,
but not algebraically conjugate.
\end{proposition}

\begin{proof}
Take two left actions $L_g,\,L_{g'} \in \mathfrak{M}_{T_{{\rm SO}(3) \times S^1}}$ with
$$
\rho(L_g)=\left(\arraycolsep=1pt\begin{array}{c}
\theta\\
\varphi
\end{array}\right),\qquad \rho(L_{g'})=\left(\arraycolsep=1pt\begin{array}{c}
\theta'\\
\varphi'
\end{array}\right),
$$
and
$$
\left\{
\begin{array}{l}
\theta'=\theta+\varphi,
\\[0.25 cm]
\varphi'=\varphi,
\end{array}
\right.
$$
where $\theta,\,\varphi$ are two rational independent irrational numbers.
Then together with Theorem \ref{the:4}, one can see that $\theta',\,\varphi'$ are also two rational independent irrational numbers,
and $L_g$ and $L_{g'}$ are topologically conjugate.
Suppose that $L_g$ and $L_{g'}$ are algebraically conjugate, and the isomorphism $\Phi:\,{\rm SO}(3) \times S^1 \rightarrow {\rm SO}(3) \times S^1$ is a algebraic conjugacy from $L_g$ to $L_{g'}$, i.e.,
$$
\Phi \circ L_g=L_{g'} \circ \Phi,\qquad \Phi(e)=e,
$$
where $e$ is the identity element of ${\rm SO}(3) \times S^1$.
Let $*$ be the group operation of ${\rm SO}(3) \times S^1$,
and the elements of $S^1$ be denoted by $2 \times 2$ matrixes like $\left(\begin{array}{cc}
\lambda&0\\
0&1
\end{array}\right)$, where $\lambda \in \mathbb{C}$ and $|\lambda|=1$.
In the proof of Proposition \ref{prop:5}, we set
$$
u=u_1u_2,\qquad \forall\,u \in {\rm U}(2),
$$
where $u_1 \in S^1 \subseteq T_{{\rm U}(2)}$ and $u_2 \in {\rm SU}(2)$.
Naturally, we can also denote the element $u=u_1u_2$ of ${\rm U}(2) \cong {\rm SU}(2) \times S^1$ by $(u_2, u_1)$,
and then $u_1u_2$ and $(u_2, u_1)$ are obviously the same element of ${\rm U}(2)$.
We know that $4$-fold covering map $\pi:\,{\rm U}(2) \rightarrow {\rm SO}(3) \times S^1$ is a group homomorphism,
so $\pi(u)=\pi(u_1u_2)=\pi(u_1)*\pi(u_1)=[u_1]*[u_2]$,
where $[\,\cdot\,]$ denotes the equivalence class of some element of ${\rm U}(2)$.
Then similar to ${\rm U}(2)$, we say that $[u_1]*[u_2]$ and $([u_2], [u_1])$ are the same element of ${\rm SO}(3) \times S^1$, where $[u_1] \in S^1$ and $[u_2] \in {\rm SO}(3)$.
Define a map $\Phi':\,{\rm SO}(3) \times S^1 \rightarrow S^1$ by
$$
\Phi':\,[u]=([u_2], [u_1])=[u_1]*[u_2] \mapsto [u_1],\quad\forall\,[u] \in {\rm SO}(3) \times S^1.
$$
Take another element $[u']=([u_2'], [u_1'])=[u_1']*[u_2']$,
and then
$$
\Phi'([u])\,\Phi'([u'])=\Phi'([u_1]*[u_2])\,\Phi'([u_1']*[u_2'])=[u_1][u_1'],
$$
and
$$
\Phi'([u]*[u'])=\Phi'(\pi(u)*\pi(u'))=\Phi'(\pi(uu')).
$$
It follows from a simple verification that there exists some $u_3'\in {\rm SU}(2)$ such that $uu'=u_1u_1'u_3'$, where $u_1u_1' \in S^1$.
Hence we have $[u_1][u_1'] \in S^1$, and then
$$
\Phi'([u]*[u'])=\Phi'(\pi(uu'))=\Phi'(\pi(u_1u_1'u_3'))=\Phi'([u_1u_1']*[u_3'])=[u_1u_1']=[u_1][u_1']=\Phi'([u])\,\Phi'([u']).
$$
Thus, $\Phi'$ is a group homomorphism from ${\rm SO}(3) \times S^1$ to $S^1$.
Similar to the proof of Proposition \ref{prop:5}, one can see that $\Phi|_{T_{{\rm SO}(3) \times S^1}}$ is an automorphism of $T_{{\rm SO}(3) \times S^1}$ equivalent to some automorphism of $T^2$ satisfying for any $t \in T_{{\rm SO}(3) \times S^1}$,
$$
\Phi:\,t=\left(\left(\begin{array}{ccc}
\cos2\pi\alpha&-\sin2\pi\alpha&0\\
\sin2\pi\alpha&\cos2\pi\alpha&0\\
0&0&1
\end{array}\right), \textrm{e}^{2\pi{\rm i}\beta}\right)
\mapsto \left(\left(\begin{array}{ccc}
\cos2\pi(\alpha+\beta)&-\sin2\pi(\alpha+\beta)&0\\
\sin2\pi(\alpha+\beta)&\cos2\pi(\alpha+\beta)&0\\
0&0&1
\end{array}\right), \textrm{e}^{2\pi{\rm i}\beta}\right),
$$
and then we have
$$
\Phi'(\Phi([u]))=\Phi'([u]),\qquad \forall\,[u] \in {\rm SO}(3) \times S^1.
$$
This fact indicates that $\Phi|_{{\rm SO}(3)}$ is an automorphism of ${\rm SO}(3)$.
It is well-known that each automorphism of ${\rm SO}(3)$ is an inner automorphism,
so $\Phi|_{{\rm SO}(3)}$ satisfies that
$$
\Phi|_{{\rm SO}(3)}([w])=[v][w][v]^{-1},\qquad \forall\,[w] \in {\rm SO}(3),
$$
where $[v] \in {\rm SO}(3)$.
And by the same way as the proof of Proposition \ref{prop:5}, we can prove that the element $[v]$ which induces the inner automorphism $\Phi|_{{\rm SO}(3)}$ must be some $3 \times 3$ matrix like $\left(\begin{array}{ccc}
\cos2\pi\alpha_0&-\sin2\pi\alpha_0&0\\
\sin2\pi\alpha_0&\cos2\pi\alpha_0&0\\
0&0&1
\end{array}\right)$,
and then according to the discussion in Section \ref{sec:5.1}, one can see that $v \in {\rm SU}(2)$ must be some diagonal matrix like $\left(\begin{array}{cc}
z_0&0\\
0&z_0
\end{array}\right)$, where $z_0 \in \mathbb{C}$ and $|z_0|=1$.
Thus, for any $[u]=[u_1]*[u_2],\,[u']=[u_1']*[u_2'] \in {\rm SO}(3) \times S^1$, we have
$$
\Phi([u])=\Phi([u_1]*[u_2])=\Phi([u_1])*\Phi([u_2])=\Phi([u_1])*([v][u_2][v]^{-1}),
$$
$$
\Phi([u'])=\Phi([u'_1]*[u'_2])=\Phi([u'_1])*\Phi([u'_2])=\Phi([u'_1])*([v][u_2'][v]^{-1})
$$
Assume that $u_1,\,u_1',\,u_2,\,u_2',\,v$ satisfy the conditions in the proof of Proposition \ref{prop:5}.
Then we obtain
$$
\Phi([u])*\Phi([u'])=[vu_2v^{-1}]*\Phi([u'_1])=[v_2]*[v'_1]=\pi(v_2v_1'),
$$
$$
\Phi([u]*[u'])=\Phi([uu'])=\Phi([u_3u_3'])=\Phi([u_3])*[vu_3'v^{-1}]=[v_3]*[v'_3]=\pi(v_3v_3'),
$$
where $u_3 \in S^1$, $v_1'=\Phi([u'_1]),\,v_3=\Phi([u_3]) \in T_{{\rm SO}(3) \times S^1}$ and $u_3',\,v_2=[vu_2v^{-1}],\,v_3'=[vu_3'v^{-1}] \in {\rm SU}(2)$.
It follows from a series of calculations that we obtain
$$
v_2=\left(\arraycolsep=2pt\begin{array}{cc}
z_1&-\bar{z_2}z_0^2\\
z_2\bar{z_0}^2&\bar{z_1'}
\end{array}\right),
\quad v_1'=v_3=\left(\arraycolsep=2pt\begin{array}{cc}
\lambda'^2&0\\
0&\bar{\lambda'}
\end{array}\right),
\quad v_3'=\left(\arraycolsep=2pt\begin{array}{cc}
z_1&-\bar{\lambda'}\bar{z_2}z_0^2\\
\lambda'z_2\bar{z_0}^2&\bar{z_1'}
\end{array}\right),
$$
and then
$$
v_2v_1'=\left(\arraycolsep=2pt\begin{array}{cc}
\lambda'^2z_1&-\bar{\lambda'}\bar{z_2}z_0^2\\
\lambda'^2z_2\bar{z_0}^2&\bar{\lambda'}\bar{z_1'}
\end{array}\right),
\qquad v_3v_3'=\left(\arraycolsep=2pt\begin{array}{cc}
\lambda'^2z_1&-\lambda'\bar{z_2}z_0^2\\
z_2\bar{z_0}^2&\bar{\lambda'}\bar{z_1'}
\end{array}\right).
$$
Since $z_0,\,z_2 \neq 0$ and $\lambda \neq \bar{\lambda}$,
it is easy to verify that $\pi(v_2v_1') \neq \pi(v_3v_3')$ according to the definition of $\pi$,
i.e., $\Phi([u])*\Phi([u']) \neq \Phi([u]*[u'])$.
This fact is in contradiction to $\Phi$ being a isomorphism,
so the left actions $L_g$ and $L_{g'}$ can not be algebraic conjugate if $\rho(L_g)$ and $\rho(L_{g'})$ satisfy the conditions at the beginning of this proof.
\end{proof}

In fact, according to Proposition \ref{prop:5}, we see that the topologically conjugate classification of the left actions on ${\rm SO}(3) \times S^1$ is not equivalent to their algebraically conjugate classification.

\begin{proposition}\label{prop:9}
For any left actions $L_g,\,L_{g'}$ on ${\rm SO}(3) \times S^1$,
$L_g$ and $L_{g'}$ are topologically conjugate if and only if $L_g$ and $L_{g'}$ are smooth conjugate.
\end{proposition}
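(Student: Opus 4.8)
The sufficiency is immediate, since a smooth self-homeomorphism is in particular a homeomorphism. For the necessity my plan is to follow the pattern of the proof of Proposition~\ref{prop:6}: first reduce both actions to left actions induced by elements of the maximal torus by means of inner automorphisms (which are smooth), then use Theorem~\ref{the:4} to pin down the arithmetic relation between the two rotation vectors, and finally observe that the conjugacies built in the sufficiency part of Theorem~\ref{the:4} are in fact diffeomorphisms.

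In more detail: every left translation $L_g$ on the Lie group ${\rm SO}(3)\times S^1$ is a diffeomorphism, with smooth inverse $L_{g^{-1}}$. Given topologically conjugate $L_g,\,L_{g'}$, Proposition~\ref{prop:3} (equivalently Corollary~\ref{GT}) produces $s,\,s'\in{\rm SO}(3)\times S^1$ and $t,\,t'\in T_{{\rm SO}(3)\times S^1}$ with $sg=ts$ and $s'g'=t's'$; the inner automorphisms $u\mapsto sus^{-1}$ and $u\mapsto s'us'^{-1}$ are smooth and conjugate $L_g$ to $L_t$ and $L_{g'}$ to $L_{t'}$, respectively. Hence $L_t$ and $L_{t'}$ are topologically conjugate, and Theorem~\ref{the:4} gives, for $\rho(L_t)=\left(\arraycolsep=1pt\begin{array}{c}\theta\\\varphi\end{array}\right)$ and $\rho(L_{t'})=\left(\arraycolsep=1pt\begin{array}{c}\theta'\\\varphi'\end{array}\right)$, the relations $\theta'=\pm\,\theta+2n\varphi+n'$ with $n,\,n'\in\mathbb Z$ and $\varphi'=\pm\,\varphi\pmod{\mathbb Z}$.

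Now I would reexamine how the sufficiency part of Theorem~\ref{the:4} builds a conjugacy from $L_t$ to $L_{t'}$: it lifts to ${\rm U}(2)\cong{\rm SU}(2)\times S^1$, selects one of the explicit maps $h^{(1)},\dots,h^{(4)}$ from the sufficiency proof of Theorem~\ref{the:2} (the one matching the sign pattern and the parity of $n'$), checks that it carries $G_2'$ onto itself, and pushes it down through the covering $\pi:{\rm U}(2)\to{\rm SO}(3)\times S^1$ via Lemma~\ref{lem:1} and Lemma~\ref{lem:5}. Each $h^{(i)}$ is a diffeomorphism of ${\rm U}(2)$: writing $u=\left(\arraycolsep=1pt\begin{array}{cc}\lambda z_1&-\lambda\bar z_2\\z_2&\bar z_1\end{array}\right)$ one has $\lambda=\det u$ and $\left(\arraycolsep=1pt\begin{array}{cc}\lambda\bar z_1&-\lambda z_2\\\bar z_2&z_1\end{array}\right)=\operatorname{diag}\!\big((\det u)^2,1\big)\,\bar u$, so each $h^{(i)}$ and its inverse are smooth expressions in the entries of $u$, of $\bar u$ and of $\det u$. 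Since $\pi$ is a smooth covering, hence a local diffeomorphism, the induced map $h$ on ${\rm SO}(3)\times S^1$ determined by $\pi\circ h^{(i)}=h\circ\pi$ is locally $\pi\circ h^{(i)}\circ(\pi|_U)^{-1}$, hence a diffeomorphism, with inverse induced the same way by $(h^{(i)})^{-1}$. Thus $L_t$ and $L_{t'}$ are smooth conjugate, and composing the three smooth conjugacies shows $L_g$ and $L_{g'}$ are smooth conjugate. The only real obstacle is the bookkeeping: one must verify that every homeomorphism occurring in the chain — the inner automorphisms of Proposition~\ref{prop:3}, the four model maps $h^{(i)}$ of Theorem~\ref{the:2}, and the map each descends to under the smooth double cover — is genuinely $C^\infty$ with $C^\infty$ inverse; no geometric input beyond Theorem~\ref{the:4} is needed.
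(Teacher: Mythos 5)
Your proposal is correct and follows essentially the same route as the paper's proof: reduce to torus elements via (smooth) inner automorphisms using Proposition~\ref{prop:3}, invoke Theorem~\ref{the:4} to get the relation on rotation vectors, replace the given conjugacy by one of the explicit smooth model maps $h^{(i)}$ from the sufficiency proof of Theorem~\ref{the:2} on the cover ${\rm U}(2)$, and descend through the smooth double cover. The only difference is that you spell out why the descended map is a diffeomorphism (local expression $\pi\circ h^{(i)}\circ(\pi|_U)^{-1}$), a point the paper dismisses as obvious.
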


\begin{proof}
The sufficiency is obviously true.

If $L_g$ and $L_{g'}$ are topologically conjugate,
according to Proposition \ref{prop:3},
there exist some $t,\,t' \in T_{{\rm SO}(3) \times S^1}$ such that $L_g$ and $L_t$ are topologically conjugate, $L_{g'}$ and $L_{t'}$ are topologically conjugate,
and hence $L_t$ and $L_{t'}$ are topologically conjugate.
Assume that $h$ is a topological conjugacy from $L_t$ to $L_{t'}$.
Then it follows from Lemma \ref{lem:5} that there exist some lifts of $L_t,\,L_{t'}$ and $h$ denoted by ${\tilde L}_t,
\,{\tilde L}_{t'}$ and $\tilde{h}$, respectively, such that
$$
\tilde{h} \circ {\tilde L}_t={\tilde L}_{t'} \circ \tilde{h}.
$$
Thus, $\rho({\tilde L}_t)$ and $\rho({\tilde L}_{t'})$ satisfy the relationship in Theorem \ref{the:2}.
Then we can take another topological conjugacy $\tilde{h'}$ from $L_t$ to $L_{t'}$ such that $\tilde{h'}$ is one of the homeomorphisms defined in the proof of the sufficiency of Theorem \ref{the:2}. It is easy to see that $\tilde{h'}$ is a smooth homeomorphism.
By the proof of the sufficiency of Theorem \ref{the:4}, we know that $\tilde{h'}$ can induce a homeomorphism $h':\,{\rm SO}(3) \times S^1 \rightarrow {\rm SO}(3) \times S^1$ such that
$$
\pi \circ \tilde{h'}=h' \circ \pi,
$$
where $\pi:\,{\rm U}(2) \rightarrow {\rm SO}(3) \times S^1$ is the $4$-fold covering map.
Obviously, $h'$ is also smooth.
Then it follows from Lemma \ref{lem:5} that
$$
h' \circ L_t=L_{t'} \circ h',
$$
so $L_t$ and $L_{t'}$ are smooth conjugate.
Since each left action on ${\rm SO}(3) \times S^1$ is a smooth homeomorphism,
one can see that $L_g$ and $L_t$ are smooth conjugate, $L_{g'}$ and $L_{t'}$ are smooth conjugate,
and hence $L_g$ and $L_{g'}$ are smooth conjugate.
\end{proof}

In fact, Proposition \ref{prop:9} implies that the topologically conjugate classification of the left actions on ${\rm SO}(3) \times S^1$ is equivalent to their smooth conjugate classification.

\subsection{Topologically conjugate classification of the left actions on ${\rm Spin}^{\mathbb{C}}(3)$}

In this part, we define the rotation vectors of the left actions in the set
$$
\mathfrak{M}_{T_{{\rm Spin}^{\mathbb{C}}(3)}}=\{L_g:\,{\rm Spin}^{\mathbb{C}}(3)\rightarrow {\rm Spin}^{\mathbb{C}}(3);\,\, g \in T_{{\rm Spin}^{\mathbb{C}}(3)}\},
$$
and utilize the rotation vectors defined to give the topologically conjugate classification of the left actions in $\mathfrak{M}_{T_{{\rm Spin}^{\mathbb{C}}(3)}}$,
and then give the topologically conjugate
classification of all left actions on ${\rm Spin}^{\mathbb{C}}(3)$.
Furthermore, for the left actions on ${\rm Spin}^{\mathbb{C}}(3)$,
we study the relationship among their topological conjugacy, algebraic conjugacy and smooth conjugacy.

Assume that $T_{{\rm U}(2)}$ is the maximal torus of ${\rm U}(2)$ chosen in Section \ref{sec:4}.
Then fix one maximal torus $T_{{\rm Spin}^{\mathbb{C}}(3)}$ of ${\rm Spin}^{\mathbb{C}}(3)$
such that $T_{{\rm Spin}^{\mathbb{C}}(3)}$ is just the image
of $T_{{\rm U}(2)}$ under the covering map $\pi$. Investigate the following diagram.
$$
\xymatrixcolsep{3pc}
\xymatrix{
{\,\,T_{{\rm U}(2)}\,} \ar[d]_-{\pi} \ar[r]^-{\Phi}
& {\,\,T^2\,\,} \ar[d]^-{\pi_0}\\
{\,\,T_{{\rm Spin}^{\mathbb{C}}(3)}\,\,} \ar[r]_-{\Phi'}
& {\,\,T^2\,\,}.}
$$
In this diagram, $\Phi:\,T_{{\rm U}(2)} \rightarrow T^2$ is the isomorphism defined in Section \ref{sec:4},
$\pi$ is the covering map from ${\rm U}(2)$ to ${\rm Spin}^{\mathbb{C}}(3)$,
and $\pi_0$ is a 2-fold covering map defined by
$$
\pi':\,\left(\begin{array}{c}
{\textrm{e}}^{2\pi{\rm i}\theta}\\
{\textrm{e}}^{2\pi{\rm i}\varphi}
\end{array}\right)\mapsto\left(\begin{array}{c}
{\textrm{e}}^{2\pi{\rm i}(\theta-\varphi)}\\
{\textrm{e}}^{2\pi{\rm i}(2\varphi)}
\end{array}\right),
\qquad \forall\, \left(\begin{array}{c}
{\textrm{e}}^{2\pi{\rm i}\theta}\\
{\textrm{e}}^{2\pi{\rm i}\varphi}
\end{array}\right) \in T^2.
$$
It follows from a simple proof that $\pi$ and $\pi_0$ are both group homomorphisms, and
$$
\ker(\pi_0 \circ \Phi)=\ker\pi.
$$
Then it is easy to show that there exists some isomorphism $\Phi':\,T_{{\rm Spin}^{\mathbb{C}}(3)} \rightarrow T^2$ induced by $\Phi,\,\pi$ and $\pi_0$ such that the diagram is commutative.
Therefore, let $(T_{{\rm Spin}^{\mathbb{C}}(3)},\,\Phi')$ be the representation of the maximal torus $T_{{\rm Spin}^{\mathbb{C}}(3)}$. Obviously,
$$
T_{{\rm Spin}^{\mathbb{C}}(3)} \cong T^2.
$$
Then for any $L_g \in \mathfrak{M}_{T_{{\rm Spin}^{\mathbb{C}}(3)}}$,
set
$$
f=\Phi' \circ L_g|_{T_{{\rm Spin}^{\mathbb{C}}(3)}} \circ \Phi'^{-1}.
$$
One can see that $f$ is a rotation of $T^2$ satisfying
$$
f(t)=\left(\begin{array}{cc}
{\textrm{e}}^{2\pi{\rm i}\theta}&0\\
0&{\textrm{e}}^{2\pi{\rm i}\varphi}
\end{array}\right)t,
\qquad \forall\, t \in T^2,
$$
where $\theta,\,\varphi \in [0, 1)$.
This fact indicates that $L_g|_{T_{{\rm Spin}^{\mathbb{C}}(3)}}$ is topologically conjugate to some rotation $f$ of $T^2$.
Thus,  according to Definition \ref{def:3}, we define the rotation vector of the left action $L_g$ under the representation $(T_{{\rm Spin}^{\mathbb{C}}(3)}, \Phi')$ by
$$
\rho(L_g)\triangleq\rho(f)=\left(\begin{array}{c}
\theta\\
\varphi
\end{array}\right),
\qquad \theta,\,\varphi \in [0,1).
$$

\begin{theorem}\label{the:5}
For the left actions $L_g,\,L_{g'} \in \mathfrak{M}_{T_{{\rm Spin}^{\mathbb{C}}(3)}}$ with
$$
\rho(L_g)=\left(\arraycolsep=1pt\begin{array}{c}
\theta\\
\varphi
\end{array}\right),\qquad \rho(L_{g'})=\left(\arraycolsep=1pt\begin{array}{c}
\theta'\\
\varphi'
\end{array}\right),
$$
$L_g$ and $L_{g'}$ are topologically conjugate if and only if
$$
\left\{
\begin{array}{l}
\theta'=\pm\,\theta+n\varphi+n',\qquad n,\,n'\in\mathbb{Z},
\\[0.25 cm]
\varphi'=\pm\,\varphi\quad(\rm{mod}\,\,\,\mathbb{Z}).
\end{array}
\right.
$$
\end{theorem}

\begin{proof}
Assume that $\tilde{L}_g,\,\tilde{L}_{g'} \in \mathfrak{M}_{T_{{\rm U}(2)}}$ satisfy
$$
\rho(\tilde{L}_g)=\left(\arraycolsep=1pt\begin{array}{c}
\theta+\varphi/2+n\\
\varphi/2
\end{array}\right),\qquad \rho(\tilde{L}_{g'})=\left(\arraycolsep=1pt\begin{array}{c}
\theta\\
\varphi/2
\end{array}\right),\qquad n \in \mathbb{Z}.
$$
According to Theorem \ref{the:2}, one can see that $\tilde{L}_g$ and $\tilde{L}_{g'}$ are topologically conjugate.
Define a homeomorphism $h:\,{\rm U}(2) \rightarrow {\rm U}(2)$ by
$$
h:\,u\mapsto\left(\begin{array}{cc}
\bar{\lambda}&0\\
0&\lambda
\end{array}\right)u,
\qquad \forall\, u=\left(\begin{array}{cc}
\lambda z_1&-\lambda\bar{z_2}\\
z_2&\bar{z_1}
\end{array}\right) \in {\rm U}(2).
$$
It is not difficult to verify that $h$ is a topological conjugacy form $\tilde{L}_g$ to $\tilde{L}_{g'}$.
Set
$$
G_3=\{(e_1, e_2),\,(-e_1, -e_2)\},\qquad G_3'=\{(e_1, e_2),\,(e_1, -e_2)\},
$$
where $e_1$ and $e_2$ are the identity elements of ${\rm SU}(2)$ and $S^1$, respectively. Then $G_3$ and $G_3'$ are both normal subgroups of ${\rm U}(2)$.
It follows from a simple proof that $h(G_3)=G_3'$, and
$$
{\rm U}(2)/G_3 \cong {\rm Spin}^{\mathbb{C}}(3),
\qquad {\rm U}(2)/G_3' \cong {\rm SU}(2) \times S^1/\mathbb{Z}_2 \cong {\rm U}(2).
$$
Thus, Lemma \ref{lem:1} implies that there exists some homeomorphism
$$h':\,{\rm Spin}^{\mathbb{C}}(3) \rightarrow {\rm U}(2)
$$
induced by $h$ such that
$$
\pi' \circ h=h' \circ \pi.
$$
where
$$
\pi:\,{\rm U}(2) \rightarrow {\rm U}(2)/G_3 \cong {\rm Spin}^{\mathbb{C}}(3),
\quad \pi':\,{\rm U}(2) \rightarrow {\rm U}(2)/G_3' \cong {\rm U}(2)
$$
are the covering maps.
If the left actions $L_g \in \mathfrak{M}_{T_{{\rm Spin}^{\mathbb{C}}(3)}}$ and $L_{g'} \in \mathfrak{M}_{T_{{\rm U}(2)}}$ satisfy
$$
\rho(L_g)=\left(\arraycolsep=1pt\begin{array}{c}
\theta\\
\varphi
\end{array}\right),\qquad \rho(L_{g'})=\left(\arraycolsep=1pt\begin{array}{c}
\theta\\
\varphi
\end{array}\right),
$$
then it is easy to prove that
$$
\pi \circ \tilde{L}_g=L_g \circ \pi,\qquad \pi' \circ \tilde{L}_{g'}=L_{g'} \circ \pi',
$$
so $\tilde{L}_g$ and $\tilde{L}_{g'}$ are the lifts of $L_g$ and $L_{g'}$ under the covering maps $\pi$ and $\pi'$, respectively.
Therefore, for any $u \in {\rm U}(2)$, we have
\begin{gather*}
\pi' \circ h \circ \tilde{L}_g(u)=h' \circ \pi \circ \tilde{L}_g(u)=h' \circ L_g \circ \pi(u),
\\
\pi' \circ \tilde{L}_{g'} \circ h(u)=L_{g'} \circ \pi' \circ h(u)=L_{g'} \circ h' \circ \pi(u).
\end{gather*}
Notice that
$$
 h \circ \tilde{L}_g=\tilde{L}_{g'} \circ h,
$$
and then
$$
h' \circ L_g \circ \pi(u)=L_{g'} \circ h' \circ \pi(u),
\qquad \forall\, u \in {\rm U}(2).
$$
Since $\pi:\,{\rm U}(2) \rightarrow {\rm Spin}^{\mathbb{C}}(3)$ is a surjection, we get
$$
h' \circ L_g(u)=L_{g'} \circ h'(u),\qquad \forall\,u \in {\rm Spin}^{\mathbb{C}}(3),
$$
so $L_g$ and $L_{g'}$ are topologically conjugate.
In fact, ${\rm Spin}^{\mathbb{C}}(3)$ and ${\rm U}(2)$ are not isomorphic, but they are homeomorphic.
If we fix the same maximal tori $T_{{\rm Spin}^{\mathbb{C}}(3)}$ and $T_{{\rm U}(2)}$, and use the same way to define the rotation vectors of the left actions in ${\mathfrak M}_{T_{{\rm Spin}^{\mathbb{C}}(3)}}$ and ${\mathfrak M}_{T_{{\rm U}(2)}}$ as the previous discussion,
we can obtain that the topologically conjugate classification of the left actions on ${\rm Spin}^{\mathbb{C}}(3)$ is equivalent to the topologically conjugate classification of the left actions on ${\rm U}(2)$.
Therefore, according to Theorem \ref{the:2}, we claim that this theorem is obviously true.
\end{proof}

Finally, we investigate the relationship among the topological conjugacy, the algebraic conjugacy and the smooth conjugacy of the left actions on ${\rm Spin}^{\mathbb{C}}(3)$.

\begin{proposition}\label{prop:10}
There exist some left actions $L_g,\,L_{g'}$ on ${\rm Spin}^{\mathbb{C}}(3)$ such that $L_g$ and $L_{g'}$ are topologically conjugate,
but not algebraically conjugate.
\end{proposition}

\begin{proof}
It is known that
$$
{\rm SO}(3) \times S^1 \cong {\rm U}(2)/G_2' \cong {\rm SU}(2)/\mathbb{Z}_2 \times S^1/\mathbb{Z}_2,
\quad{\rm Spin}^{\mathbb{C}}(3) \cong {\rm U}(2)/G_3,
$$
where
\begin{gather*}
G_2'=\{(e_1, e_2),\,(e_1, -e_2),\,(-e_1, e_2),\,(-e_1, -e_2)\} \cong \mathbb{Z}_2 \oplus \mathbb{Z}_2,
\\
G_3=\{(e_1, e_2),\,(-e_1, -e_2)\} \cong \mathbb{Z}_2,
\end{gather*}
and $e_1,\,e_2$ are the identity elements of ${\rm SU}(2)$ and $S^1$, respectively.
From this view, ${\rm Spin}^{\mathbb{C}}(3)$ can be regarded as a 2-fold covering space of ${\rm SO}(3) \times S^1$.
Then for every left action $L_g \in \mathfrak{M}_{T_{{\rm SO}(3) \times S^1}}$, there exist two lifts of $L_g$ denoted by $\tilde{L}_g$ and $\tilde{L}_{g}'$,
and both of them are in the set $\mathfrak{M}_{T_{{\rm Spin}^{\mathbb{C}}(3)}}$.
It follows from a simple calculation that for any left action $L_g \in \mathfrak{M}_{T_{{\rm SO}(3) \times S^1}}$ with $\rho(L_g)=\left(\arraycolsep=1pt\begin{array}{c}
\theta\\
\varphi
\end{array}\right)$, the lifts of $L_g$ satisfy
$$
\rho(\tilde{L}_g)=\left(\arraycolsep=1pt\begin{array}{c}
\theta/2-\varphi/2+n_1\\
\varphi
\end{array}\right),
\quad\rho(\tilde{L}_g')=\left(\arraycolsep=1pt\begin{array}{c}
\theta/2-\varphi/2+1/2+n_2\\
\varphi
\end{array}\right),
$$
where $n_1,\,n_2 \in \mathbb{Z}$.
Assume that the left actions $\tilde{L}_g,\,\tilde{L}_{g'} \in \mathfrak{M}_{T_{{\rm Spin}^{\mathbb{C}}(3)}}$ are algebraically conjugate, and $\tilde{\Phi}$ is an algebraic conjugacy from $\tilde{L}_g$ to $\tilde{L}_{g'}$.
One can see that there exist some left actions $L_g,\,L_{g'} \in \mathfrak{M}_{T_{{\rm SO}(3) \times S^1}}$ such that
$$
\pi'' \circ \tilde{L}_g=L_g \circ \pi'',
\qquad \pi'' \circ \tilde{L}_{g'}=L_{g'} \circ \pi'',
$$
where $\pi''$ is the covering map from ${\rm Spin}^{\mathbb{C}}(3)$ to ${\rm SO}(3) \times S^1$.
Notice that
$$
{\rm Spin}^{\mathbb{C}}(3)/G \cong {\rm Spin}^{\mathbb{C}}(3)/\mathbb{Z}_2 \cong {\rm SO}(3) \times S^1,
$$
where $G \cong \mathbb{Z}_2$ is a normal subgroup of ${\rm Spin}^{\mathbb{C}}(3)$ consisting of the only two idempotents of ${\rm Spin}^{\mathbb{C}}(3)$,
and $\tilde{\Phi}$ is a automorphism of ${\rm Spin}^{\mathbb{C}}(3)$.
Then we have $\tilde{\Phi}(G)=G$.
Thus, Lemma \ref{lem:1} implies that there exists some isomorphism $\Phi:\,{\rm SO}(3) \times S^1 \rightarrow {\rm SO}(3) \times S^1$ such that
$$
\pi \circ \tilde{\Phi}=\Phi \circ \pi.
$$
And then it follows from a simple verification that
$$
\Phi \circ L_g=L_{g'} \circ \Phi,
$$
so $L_g$ and $L_{g'}$ are algebraically conjugate.
Thus, together with the proof of Proposition \ref{prop:8},
we know that there exist some left actions $L_g,\,L_{g'} \in \mathfrak{M}_{T_{{\rm Spin}^{\mathbb{C}}(3)}}$ such that $L_g$ and $L_{g'}$ are topologically conjugate, but not algebraically conjugate.
\end{proof}

In fact, according to Proposition \ref{prop:10}, we see that the topologically conjugate classification of the left actions on ${\rm Spin}^{\mathbb{C}}(3)$ is not equivalent to their algebraically conjugate classification.

\begin{proposition}\label{prop:11}
For any left actions $L_g,\,L_{g'}$ on ${\rm Spin}^{\mathbb{C}}(3)$,
$L_g$ and $L_{g'}$ are topologically conjugate if and only if $L_g$ and $L_{g'}$ are smooth conjugate.
\end{proposition}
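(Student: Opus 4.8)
The plan is to follow the strategy already used for Propositions \ref{prop:6} and \ref{prop:9}: the sufficiency is immediate, since any smooth conjugacy is in particular a topological conjugacy, so the whole content is the implication ``topologically conjugate $\Longrightarrow$ smoothly conjugate''. Assume $L_g$ and $L_{g'}$ are topologically conjugate. First I would use Proposition \ref{prop:3} together with Corollary \ref{GT} to reduce to the case of translations by elements of the maximal torus: there exist $t,\,t'\in T_{{\rm Spin}^{\mathbb{C}}(3)}$ and inner automorphisms $\phi,\,\phi'$ of ${\rm Spin}^{\mathbb{C}}(3)$ with $\phi\circ L_g=L_t\circ\phi$ and $\phi'\circ L_{g'}=L_{t'}\circ\phi'$. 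Since every inner automorphism of a Lie group is a diffeomorphism, it suffices to prove that $L_t$ and $L_{t'}$ are smoothly conjugate. As $L_t$ and $L_{t'}$ are still topologically conjugate, Theorem \ref{the:5} applies and gives
$$
\theta'=\pm\,\theta+n\varphi+n',\qquad \varphi'=\pm\,\varphi\quad({\rm mod}\,\,\,\mathbb{Z}),
$$
where $\rho(L_t)=(\theta,\varphi)^{\mathrm T}$ and $\rho(L_{t'})=(\theta',\varphi')^{\mathrm T}$.

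Next I would revisit the proof of Theorem \ref{the:5}. There a self-homeomorphism $h$ of ${\rm U}(2)$, given by the explicit formula $h(u)=\mathrm{diag}(\bar\lambda,\lambda)\,u$, is used; this $h$ is manifestly a diffeomorphism of ${\rm U}(2)$, it sends $G_3$ to $G_3'$, and via Lemma \ref{lem:1} it induces a homeomorphism $h':\,{\rm Spin}^{\mathbb{C}}(3)\to{\rm U}(2)$ with $\pi'\circ h=h'\circ\pi$. Because the covering maps $\pi:\,{\rm U}(2)\to{\rm Spin}^{\mathbb{C}}(3)$ and $\pi':\,{\rm U}(2)\to{\rm U}(2)/G_3'\cong{\rm U}(2)$ are smooth covering maps, hence local diffeomorphisms, and smoothness is a local property, one has locally $h'=\pi'\circ h\circ(\pi|_U)^{-1}$, a composite of smooth maps; the same argument applied to $h^{-1}$ shows that $h'$ is in fact a diffeomorphism. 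Moreover $h'$ conjugates each $L_g\in\mathscr M_{T_{{\rm Spin}^{\mathbb{C}}(3)}}$ to the left action on ${\rm U}(2)$ having the same rotation vector. Transporting the problem through this diffeomorphism, the smooth conjugacy classification of the left actions on ${\rm Spin}^{\mathbb{C}}(3)$ is identified with that on ${\rm U}(2)$; combined with Proposition \ref{prop:6}, which says that topological and smooth conjugacy coincide for left actions on ${\rm U}(2)$, we conclude that $L_t$ and $L_{t'}$ are smoothly conjugate.

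Finally, since every left translation action on ${\rm Spin}^{\mathbb{C}}(3)$ is itself a smooth self-diffeomorphism, composing the smooth conjugacy from $L_t$ to $L_{t'}$ with the inner-automorphism conjugacies from $L_g$ to $L_t$ and from $L_{g'}$ to $L_{t'}$ produces a smooth conjugacy from $L_g$ to $L_{g'}$, which finishes the proof. The step I expect to be the main technical point is verifying the smoothness of the map $h'$ induced on the quotient by Lemma \ref{lem:1}: one must check that the maps involved — the covering maps and the explicit conjugacies — are genuinely smooth, and exploit that smoothness can be checked locally, so that both $h'$ and $h'^{-1}$ are locally composites of smooth maps. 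An alternative, more self-contained route would be to rerun the sufficiency argument of Theorem \ref{the:5} from scratch, checking that each building block — the explicit homeomorphisms $h^{(1)},\dots,h^{(4)}$ of ${\rm U}(2)$ from the proof of Theorem \ref{the:2}, the map $h$ above, and the maps they induce on quotients via Lemmas \ref{lem:1} and \ref{lem:5} — is smooth, so that the resulting conjugacy on ${\rm Spin}^{\mathbb{C}}(3)$ is smooth.
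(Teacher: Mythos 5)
Your proof is correct, but your primary route is genuinely different from the paper's. The paper disposes of this proposition in one line (``similar to the proof of Proposition \ref{prop:9}''), i.e.\ the intended argument is the lift--descend one: pass to the covering ${\rm U}(2)\to{\rm Spin}^{\mathbb{C}}(3)={\rm U}(2)/G_3$, replace the lifted topological conjugacy by one of the explicit smooth conjugacies $h^{(1)},\dots,h^{(4)}$ from the sufficiency part of Theorem \ref{the:2}, check it carries $G_3$ to $G_3$, and push it down via Lemmas \ref{lem:1} and \ref{lem:5} --- which is exactly the ``alternative, more self-contained route'' you sketch at the end. Your main argument instead upgrades the map $h'\colon{\rm Spin}^{\mathbb{C}}(3)\to{\rm U}(2)$ already built in the proof of Theorem \ref{the:5} to a diffeomorphism (correctly, since $h(u)=\mathrm{diag}(\bar\lambda,\lambda)u$ with $\lambda=\det u$ is smooth with smooth inverse, and the covering maps are local diffeomorphisms, so $h'=\pi'\circ h\circ(\pi|_U)^{-1}$ locally), observes that it intertwines torus left-translations while preserving rotation vectors, and then transports the entire smooth-classification problem to ${\rm U}(2)$ where Proposition \ref{prop:6} finishes it. This buys you something concrete: you never have to re-verify that the explicit conjugacies of Theorem \ref{the:2} preserve the particular subgroup $G_3=\{(e_1,e_2),(-e_1,-e_2)\}$ (a point that requires a little care, since these maps permute the central elements of ${\rm U}(2)$ nontrivially for some parameter choices), whereas the paper's route keeps everything on the fixed group ${\rm Spin}^{\mathbb{C}}(3)$ and avoids invoking the manifold identification with ${\rm U}(2)$. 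Your reduction to the maximal torus via Proposition \ref{prop:3} and smooth inner automorphisms, and the final composition of smooth conjugacies, are both sound, so the argument is complete.
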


Similar to the proof of Proposition \ref{prop:9},
it is easy to verify this proposition.

Proposition \ref{prop:11} implies that the topologically conjugate classification of the left actions on ${\rm Spin}^{\mathbb{C}}(3)$ is equivalent to their smooth conjugate classification.


\begin{thebibliography}{000}

\bibitem{AP-1965}
Adler R L, Palais R. Homeomorphic conjugacy of automorphisms on the torus. P Am Math Soc, 1965, 16: 1222--1225.

\bibitem{ATW-1997}
Adler R L, Tresser C, Worfolk P. Topological conjugacy of linear endomorphisms of the 2-torus. T Am Math Soc, 1997, 394(4): 1633--1652.

\bibitem{BS-2000}
Bhattacharya S. Orbit equivalence and topological conjugacy of affine actions on compact abelian groups. Monatsh Math, 2000, 129(2): 89--96.

\bibitem{BS2-2000}
Bhattacharya S. Topological conjugacy of automorphism flows on compact Lie groups. Ergod Theor Dyn Syst, 2000, 20(2): 335--342.

\bibitem{AC-1998}
Connes A. A new proof of Morley¡¯s theorem. Publ Math-Paris, 1998, 88: 43-46.

\bibitem{DP-2013}
D\'{a}valos P. On torus homeomorphisms whose rotation set is an interval. Math Z, 2013, 275(3-4): 1005--1045.

\bibitem{Ell95}
Elliott G. The classification problem for amenable $C^*$-algebras. Proceedings of the International Congress of Mathematicians, 1994: 922-932.


\bibitem{EG96J}
Elliott G, Gong G. On Inductive Limits of Matrix Algebras over the Two-Torus. Am J Math, 1996, 118(2): 263-290.

\bibitem{EG96}
Elliott G, Gong G. On the classification of $C^*$-algebras of real rank zero II. Ann Math, 1996, 144: 497-610.


\bibitem{EGL07}
Elliott G, Gong G, Li L. On the classification of simple inductive limit $C^*$-algebras II: The isomorphism theorem. Invent Math, 2007, 168(2): 249-320.

\bibitem{EI07}
Elliott G, Ivanescu C. The classification of separable simple $C^*$-algebras which are inductive limits of continuous-trace $C^*$-algebras with spectrum homeomorphic to the closed interval [0, 1]. J Funct Anal, 2007, 254(4): 879-903.

\bibitem{EL07}
Elliott G, Li H. Morita equivalence of smooth noncommutative tori. Acta Math, 2007, 199(1): 1-27.

\bibitem{EL08}
Elliott G, Li H. Strong Morita equivalence of higher-dimensional noncommutative tori II.
Math Ann, 2008, 341(4): 825-844.


\bibitem{Gong98}
Gong G. Classification of $C^*$-algebras of real rank zero and unsuspended E-equivalence types.
J Funct Anal, 1998, 152(2): 281-329.

\bibitem{JF-1970}
Franks J. Anosov diffeomorphism. University of California, Berkeley, 1968.

\bibitem{FJ-1989}
Franks J. Realizing rotation vectors for torus homeomorphisms. T Am Math Soc, 1989, 311(1): 107-115.

\bibitem{GPK}
Giordano T, Putnam I, Skau C. Topological orbit equivalence and $C^*$-crossed products. J Reine Angew Math, 1995, 469: 51-111.

\bibitem{HA-2000}
Hatcher A. Notes on basic 3-manifold topology. 2000.

\bibitem{HouLiuPan}
Hou B, Liu H, Pan X. Mutual embeddability equivalence relation for rotation algebras. J Math Anal Appl, 2017, 452(1): 495-504.

\bibitem{HD-1966}
Husemoller D. Fibre bundles. Berlin, Heidelberg, New York: Springer, 1966.

\bibitem{KA-2008}
Koropecki A. On the dynamics of torus homeomorphisms. PhD thesis, IMPA, 2008.

\bibitem{KR-1973}
Kuiper N H, Robbin J W. Topological classification of linear endomorphisms. Invent Math, 1973, 19(2): 83-106.

\bibitem{KR-1989}
Kwasik S, Schultz R. On s-cobordisms of metacyclic prism manifolds. Invent Math, 1989, 97: 526-552.


\bibitem{Li04}
Li H. Strong Morita equivalence of higher-dimensional noncommutative tori.
J Reine Angew Math, 2004, 576: 167-180.

\bibitem{Lin}
Lin H. Classification of simple $C^*$-algebras and higher dimensional noncommutative tori.
Ann Math, 2003, 157: 521-544.


\bibitem{Lin04}
Lin H. Classification of simple $C^*$-algebras of tracial topological rank zero. Duke Math J, 2004, 125: 91-119.

\bibitem{Lin2008}
Lin H. Furstenberg Transformations and Approximate Conjugacy. Can J Math, 2008, 60(1): 189-207.

\bibitem{Lin15M}
Lin H. On locally AH algebras. Mem Am Math Soc, 2015, 235(1107).

\bibitem{Lin18}
Lin H. Crossed products and minimal dynamical systems. J Topol Anal, 2018, 10(2): 447-469.

\bibitem{LP10}
Lin H,  Phillips N C. Crossed products by minimal homeomorphisms. J Reine Angew Math, 2010, 641: 95-122.

\bibitem{HL-2017}
Liu H. Smooth crossed product of minimal unique ergodic diffeomorphism of odd sphere. J Noncommut Geom, 2017, 11: 1381-1393.

\bibitem{HL-2018}
Liu H. Smooth crossed product of minimal unique ergodic diffeomorphisms of a manifold and cyclic cohomology. J Topol Anal, 2018: 1-13.

\bibitem{MA-1974}
Manning A. There are no new Anosov diffeomorphisms on tori. Am J Math, 1974, 96: 422-429.

\bibitem{PJF-1977}
Price J F. Lie groups and compact groups. Cambridge University Press, 1977.

\bibitem{RJW-1972}
Robbin J W. Topological conjugacy and structural stability for discrete dynamical systems. B Am Math Soc, 1972, 78(6): 923--952.

\bibitem{PV}
Pimsner M, Voiculescu D. Exact sequences for K-Groups and Ext-groups of certain crossed
products. J Operat Theor, 1980, 4: 93--118.

\bibitem{Rie}
Rieffel M. $C^*$-algebras associated with irrational rotations. Pac J Math, 1981, 93: 415--429.

\bibitem{SR-1977}
Schultz R. On the topological classification of linear representations. Topology, 1977, 16(3): 263--269.

\bibitem{SMR-2007}
Sepanski M R. Compact Lie groups. Springer Science $\&$ Business Media, 2007.

\bibitem{SS-1963}
Smale S. Dynamical systems and the topological conjugacy problem for diffeomorphisms. Columbia University, Department of Mathematics, 1963.

\bibitem{SH-2010}
Sun H. Degree $\pm 1$ self-maps and self-homeomorphisms on prime 3-manifolds. Algebr Geom Topol, 2010, 10(2): 867--890.

\bibitem{SW-1996}
Swanson R, Walker R. Boundaries of rotation sets for homeomorphisms of the $n$-torus. P Am Math Soc, 1996, 124(10): 3247--3255.

\bibitem{TF-2012}
Tal F. Transitivity and rotation sets with nonempty interior for homeomorphisms of the 2-torus. P Am Math Soc, 2012, 140(10): 3567--3579.

\bibitem{TW09}
Toms A, Winter W. Minimal dynamics and the classification of $C^*$-algebras. P Natl Acad Sci USA, 2009, 106(40): 16942--16943.

\bibitem{WP-1968}
Walters P. Topological conjugacy of affine transformations of tori. T Am Math Soc, 1968, 131: 40--50.

\bibitem{WP-1969}
Walters P. Topological conjugacy of affine transformations of compact abelian groups. T Am Math Soc, 1969, 140: 95--107.

\bibitem{SW-2008}
Weinberger S. Some remarks inspired by the $C^0$ Zimmer program. Geometry, Rigidity and Group actions, Chicago Lect. Math. 2011: 262--282.

\end{thebibliography}
\end{document}